\newtheorem{theorem}{Theorem}[section]
\newtheorem{lemma}[theorem]{Lemma}
\newtheorem{corollary}[theorem]{Corollary}
\newtheorem{proposition}[theorem]{Proposition}
\newtheorem{definition}[theorem]{Definition}
\newcommand{\R}{{\mathbb R}}
\newcommand{\Sp}{\mathbb{S}}
\newcommand{\E}{\operatorname{\mathbb{E}}}
\newcommand{\cT}{\mathcal{T}}
\renewcommand{\P}{\operatorname{\mathbb{P}}}
\newcommand{\PT}{{\cal P}_\Delta}
\newcommand{\PTc}{{\cal P}_{\Delta^\perp}}
\newcommand{\ones}{\mathbf 1}
\newcommand{\ba}{\mathbf{a}}
\newcommand{\bx}{\mathbf{x}}
\newcommand{\by}{\mathbf{y}}
\newcommand{\bz}{\mathbf{z}}
\newcommand{\bg}{\mathbf{g}}
\newcommand{\bu}{\mathbf{u}}
\newcommand{\bv}{\mathbf{v}}
\newcommand{\bw}{\mathbf{w}}
\newcommand{\bxs}{\mathbf{x}^\star}
\newcommand{\A}{\mathcal{A}}
\newcommand{\ch}{\mathrm{conv}}
\newcommand{\n}{\mathrm{null}}
\title{The Convex Geometry of Linear Inverse Problems}
\author{Venkat Chandrasekaran$^m$, Benjamin Recht$^w$, Pablo A. Parrilo$^m$, and Alan S. Willsky$^m$ \thanks{Email: \{venkatc,parrilo,willsky\}@mit.edu; brecht@cs.wisc.edu.
This work was supported in part by AFOSR grant FA9550-08-1-0180, in part by a MURI through ARO grant W911NF-06-1-0076, in part by a MURI through AFOSR grant FA9550-06-1-0303,  in part by NSF FRG 0757207, in part through ONR award N00014-11-1-0723, and NSF award CCF-1139953. }
\vspace{0.25in} \\ $^m$ Laboratory for Information and Decision Systems \\ Department of Electrical Engineering and Computer Science \\ Massachusetts Institute of Technology \\ Cambridge, MA 02139 USA  \vspace{0.1in} \\ $^w$ Computer Sciences Department \\ University of Wisconsin \\ Madison, WI 53706 USA}
\date{December 3, 2010; revised February 24, 2012}
\begin{document}

\maketitle

\begin{abstract}
In applications throughout science and engineering one is often faced with the challenge of solving an ill-posed inverse problem, where the number of available measurements is smaller than the dimension of the model to be estimated.  However in many practical situations of interest, models are constrained structurally so that they only have a few degrees of freedom relative to their ambient dimension.  This paper provides a general framework to convert notions of simplicity into convex penalty functions, resulting in convex optimization solutions to linear, underdetermined inverse problems.  The class of simple models considered are those formed as the sum of a few atoms from some (possibly infinite) elementary atomic set; examples include well-studied cases such as sparse vectors (e.g., signal processing, statistics) and low-rank matrices (e.g., control, statistics), as well as several others including sums of a few permutations matrices (e.g., ranked elections, multiobject tracking), low-rank tensors (e.g., computer vision, neuroscience), orthogonal matrices (e.g., machine learning), and atomic measures (e.g., system identification).  The convex programming formulation is based on minimizing the norm induced by the convex hull of the atomic set; this norm is referred to as the \emph{atomic norm}.  The facial structure of the atomic norm ball carries a number of favorable properties that are useful for recovering simple models, and an analysis of the underlying convex geometry provides sharp estimates of the number of generic measurements required for exact and robust recovery of models from partial information.  These estimates are based on computing the Gaussian widths of tangent cones to the atomic norm ball.  When the atomic set has algebraic structure the resulting optimization problems can be solved or approximated via semidefinite programming.  The quality of these approximations affects the number of measurements required for recovery, and this tradeoff is characterized via some examples.  Thus this work extends the catalog of simple models (beyond sparse vectors and low-rank matrices) that can be recovered from limited linear information via tractable convex programming.

{\bf Keywords}: Convex optimization; semidefinite programming; atomic norms; real algebraic geometry; Gaussian width; symmetry.

\end{abstract}

\section{Introduction}
\label{sec:intro}

Deducing the state or structure of a system from partial, noisy measurements is a fundamental task throughout the sciences and
engineering.  A commonly encountered difficulty that arises in such inverse problems is the limited availability of data relative to the ambient dimension of the signal to be estimated.  However many interesting signals or models in practice contain few degrees of freedom relative to their ambient dimension.  For instance a small number of genes may constitute a signature for disease, very few parameters may be required to specify the correlation structure in a time series, or a sparse collection of geometric constraints might completely specify a molecular configuration.  Such low-dimensional structure plays an important role in making inverse problems well-posed.  In this paper we propose a unified approach to transform notions of simplicity into convex penalty functions, thus obtaining convex optimization formulations for inverse problems.


We describe a model as simple if it can be written as a nonnegative combination of a few elements from an atomic set.  Concretely let $\bx \in \R^p$ be formed as follows:
\begin{equation}
\bx = \sum_{i=1}^k c_i \ba_i, ~~~ \ba_i \in \A, c_i \geq 0, \label{eq:simp1}
\end{equation}
where $\A$ is a set of atoms that constitute simple building blocks of general signals.  Here we assume that $\bx$ is \emph{simple} so that $k$ is relatively small.  For example $\A$ could be the finite set of unit-norm one-sparse vectors in which case $\bx$ is a sparse vector, or $\A$ could be the infinite set of unit-norm rank-one matrices in which case $\bx$ is a low-rank matrix.  These two cases arise in many applications, and have received a tremendous amount of attention recently as several authors have shown that sparse vectors and low-rank matrices can be recovered from highly incomplete information \cite{CanRT2006,Don2006a,Don2006b,RecFP2010,CanR2009}.  However a number of other structured mathematical objects also fit the notion of simplicity described in \eqref{eq:simp1}.  The set $\A$ could be the collection of unit-norm rank-one tensors, in which case $\bx$ is a low-rank tensor and we are faced with the familiar challenge of low-rank tensor decomposition.  Such problems arise in numerous applications in computer vision and image processing \cite{AjaGTL2009}, and in neuroscience \cite{BecS2005}.  Alternatively $\A$ could be the set of permutation matrices; sums of a few permutation matrices are objects of interest in ranking \cite{JagS2010} and multi-object tracking.  As yet another example, $\A$ could consist of measures supported at a single point so that $\bx$ is an atomic measure supported at just a few points.  This notion of simplicity arises in problems in system identification and statistics.

In each of these examples as well as several others, a fundamental problem of interest is to recover $\bx$ given limited \emph{linear} measurements.  For instance the question of recovering a sparse function over the group of permutations (i.e., the sum of a few permutation matrices) given linear measurements in the form of partial Fourier information was investigated in the context of ranked election problems \cite{JagS2010}.  Similar linear inverse problems arise with atomic measures in system identification, with orthogonal matrices in machine learning, and with simple models formed from several other atomic sets (see Section~\ref{subsec:ex} for more examples).  Hence we seek tractable computational tools to solve such problems.  When $\A$ is the collection of one-sparse vectors, a method of choice is to use the $\ell_1$ norm to induce sparse solutions.  This method has seen a surge in interest in the last few years as it provides a tractable convex optimization formulation to exactly recover sparse vectors under various conditions \cite{CanRT2006,Don2006a,Don2006b}.  More recently the nuclear norm has been proposed as an effective convex surrogate for solving rank minimization problems subject to various affine constraints \cite{RecFP2010,CanR2009}.

Motivated by the success of these methods we propose a general convex optimization framework in Section~\ref{sec:def} in order to recover objects with structure of the form \eqref{eq:simp1} from limited linear measurements.  The guiding question behind our framework is:  how do we take a concept of simplicity such as sparsity and derive the $\ell_1$ norm as a convex heuristic?  In other words what is the natural procedure to go from the set of one-sparse vectors $\A$ to the $\ell_1$ norm?  We observe that the convex hull of (unit-Euclidean-norm) one-sparse vectors is the unit ball of the $\ell_1$ norm, or the cross-polytope.  Similarly the convex hull of the (unit-Euclidean-norm) rank-one matrices is the nuclear norm ball; see Figure~\ref{fig:fig1} for illustrations.  These constructions suggest a natural generalization to other settings.  Under suitable conditions the convex hull $\ch(\A)$ defines the unit ball of a norm, which is called the \emph{atomic norm} induced by the atomic set $\A$.  We can then minimize the atomic norm subject to measurement constraints, which results in a convex programming heuristic for recovering simple models given linear measurements.  As an example suppose we wish to recover the sum of a few permutation matrices given linear measurements.  The convex hull of the set of permutation matrices is the \emph{Birkhoff polytope} of doubly stochastic matrices \cite{Zie1995}, and our proposal is to solve a convex program that minimizes the norm induced by this polytope.  Similarly if we wish to recover an orthogonal matrix from linear measurements we would solve a \emph{spectral norm} minimization problem, as the spectral norm ball is the convex hull of all orthogonal matrices.  As discussed in Section~\ref{subsec:why} the atomic norm minimization problem is, in some sense, the best convex heuristic for recovering simple models with respect to a given atomic set.

\begin{figure}
\begin{center}
\subfigure[]{\epsfig{file=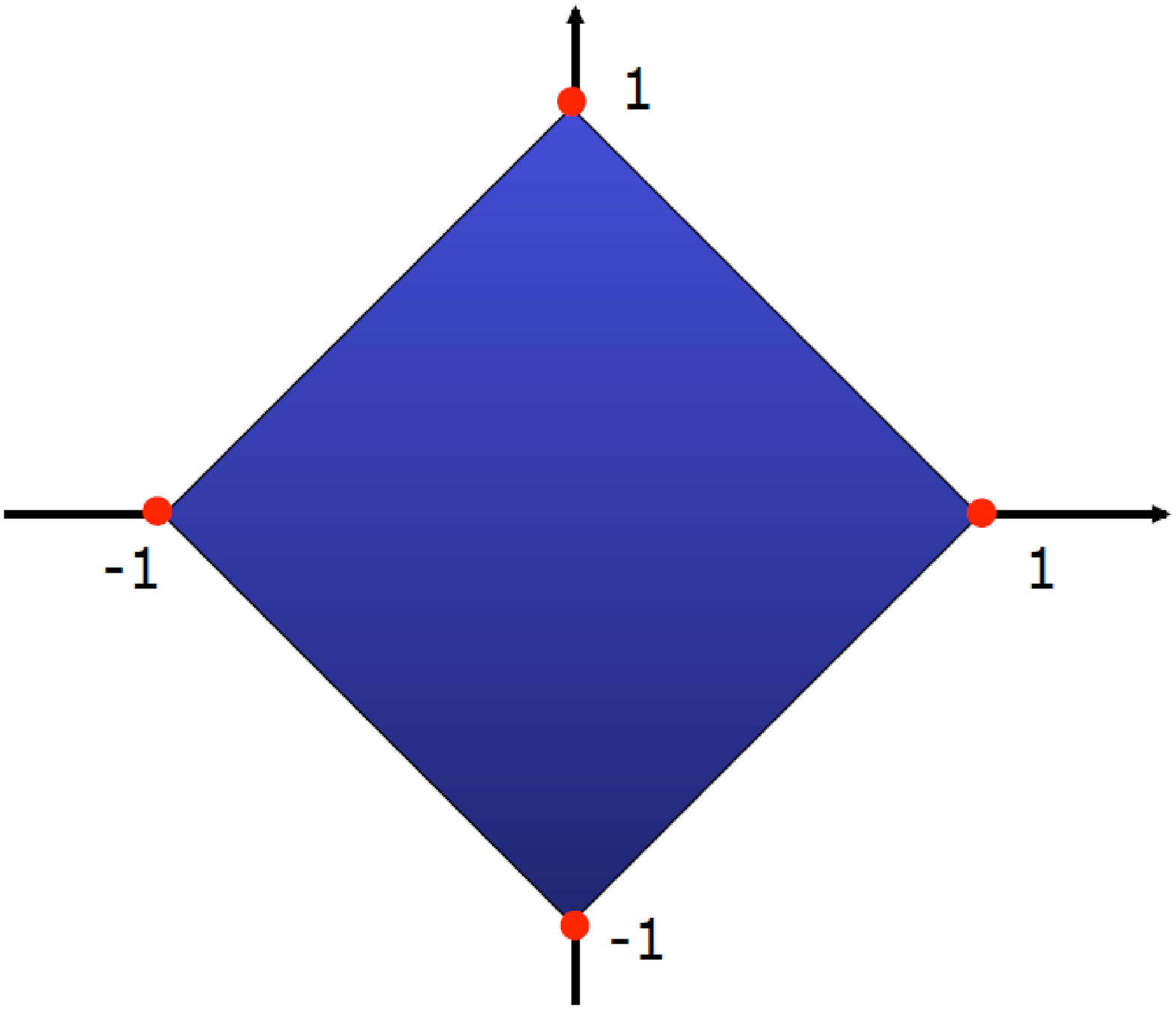,width=4.5cm,height=4cm}}
\hspace{0.3in}
\subfigure[]{\epsfig{file=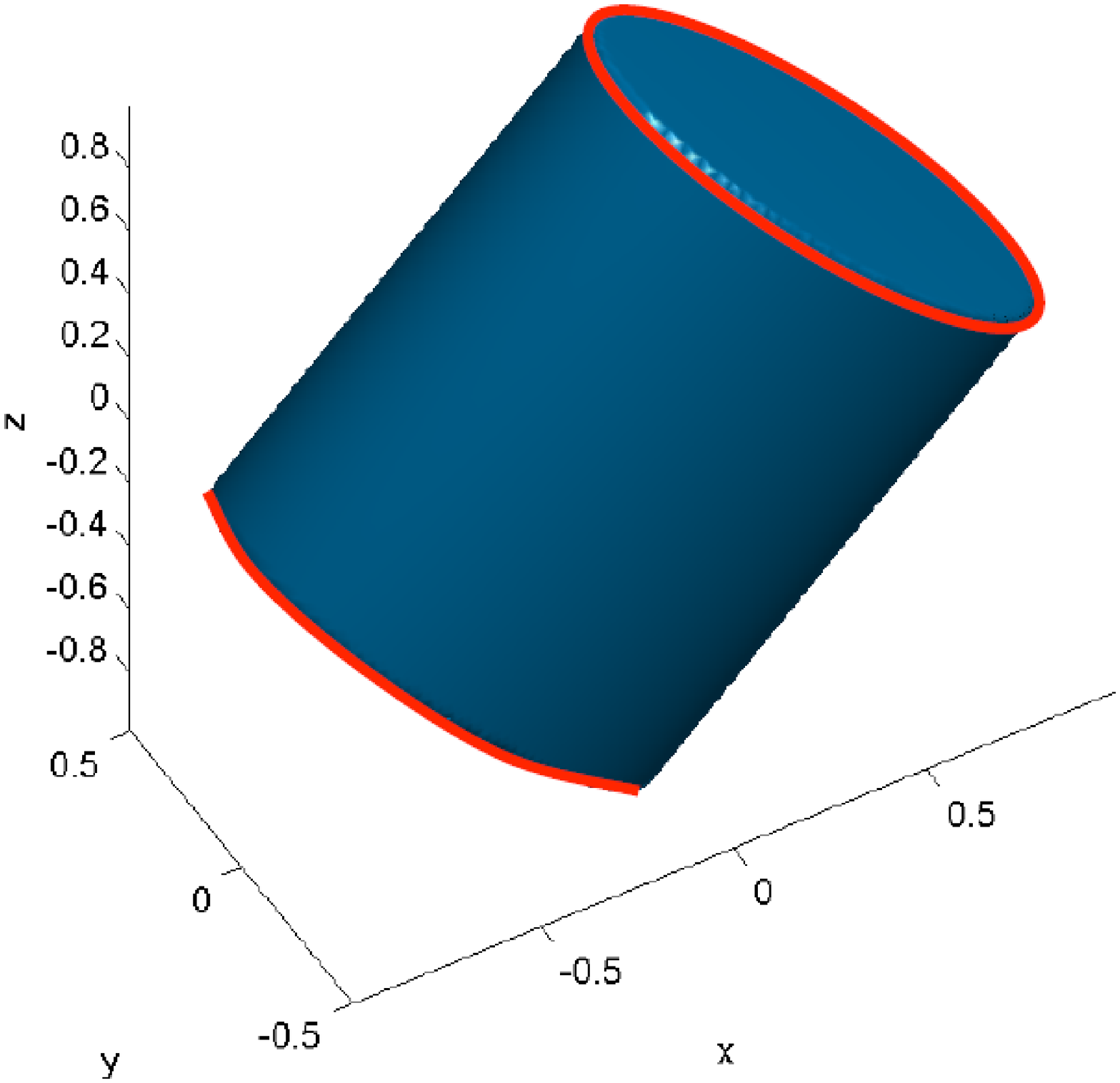,width=4.5cm,height=4cm}}
\hspace{0.3in}
\subfigure[]{\epsfig{file=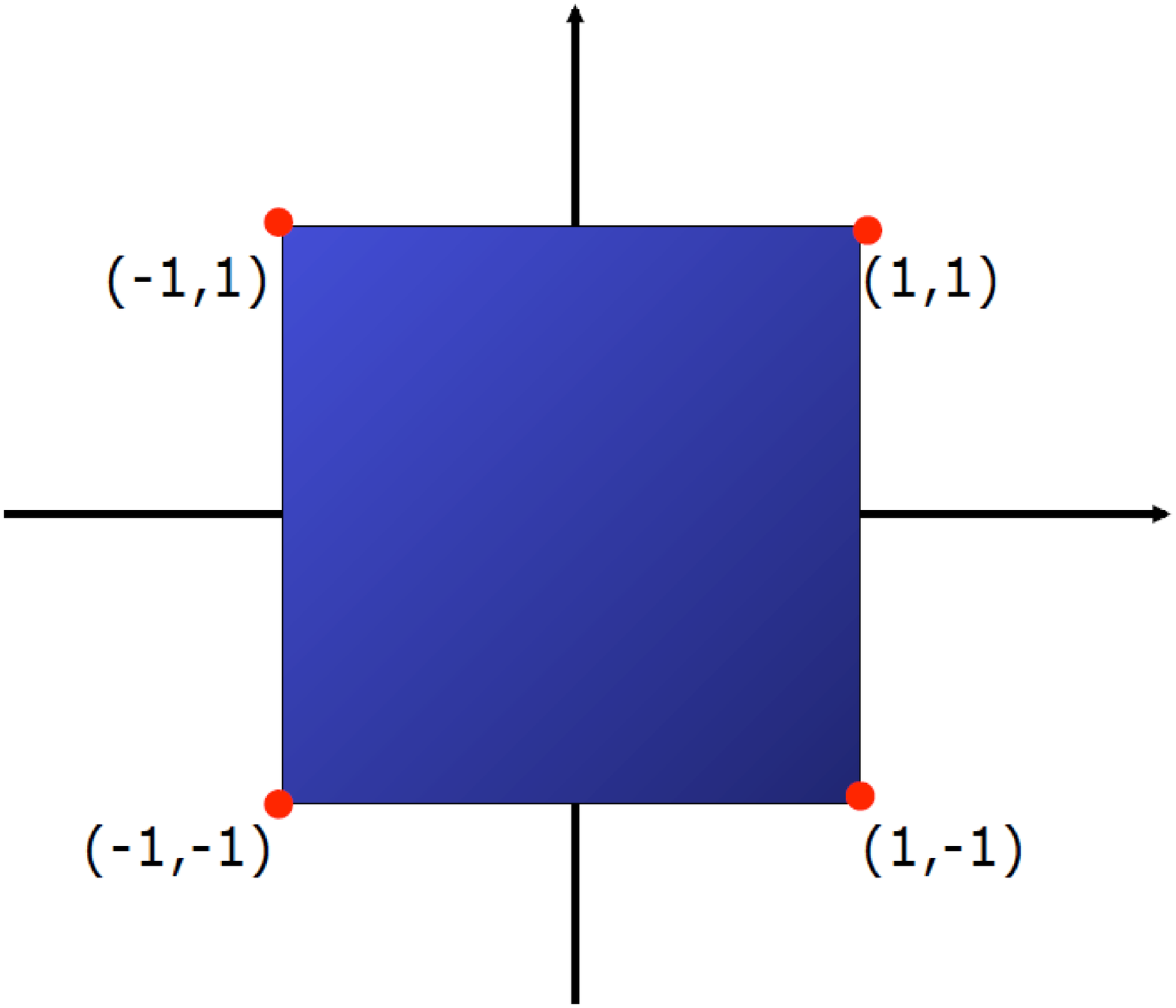,width=4.5cm,height=4cm}}
\caption{Unit balls of some atomic norms: In each figure, the set of atoms is graphed in red and the unit ball of the associated atomic norm is graphed in blue.  In (a), the atoms are the unit-Euclidean-norm one-sparse vectors, and the atomic norm is the $\ell_1$ norm.  In (b), the atoms are the $2 \times 2$ symmetric unit-Euclidean-norm rank-one matrices, and the atomic norm is the nuclear norm.  In (c), the atoms are the vectors $\{-1,+1\}^2$, and the atomic norm is the $\ell_\infty$ norm.} \label{fig:fig1}
\end{center}
\end{figure}

We give general conditions for exact and robust recovery using the atomic norm heuristic.  In Section~\ref{sec:gaussian} we provide concrete bounds on the number of generic linear measurements required for the atomic norm heuristic to succeed.  This analysis is based on computing certain \emph{Gaussian widths} of tangent cones with respect to the unit balls of the atomic norm \cite{Gor1988}.  Arguments based on Gaussian width have been fruitfully applied to obtain bounds on the number of Gaussian measurements for the special case of recovering sparse vectors via $\ell_1$ norm minimization \cite{RudV2006,Sto2009}, but computing Gaussian widths of general cones is not easy.  Therefore it is important to exploit the special structure in atomic norms, while still obtaining sufficiently general results that are broadly applicable.  An important theme in this paper is the connection between Gaussian widths and various notions of \emph{symmetry}.  Specifically by exploiting symmetry structure in certain atomic norms as well as convex duality properties, we give bounds on the number of measurements required for recovery using very general atomic norm heuristics.  For example we provide precise estimates of the number of generic measurements required for exact recovery of an orthogonal matrix via spectral norm minimization, and the number of generic measurements required for exact recovery of a permutation matrix by minimizing the norm induced by the Birkhoff polytope.  While these results correspond to the recovery of individual atoms from random measurements, our techniques are more generally applicable to the recovery of models formed as sums of a few atoms as well.  We also give tighter bounds than those previously obtained on the number of measurements required to robustly recover sparse vectors and low-rank matrices via $\ell_1$ norm and nuclear norm minimization.  In all of the cases we investigate, we find that the number of measurements required to reconstruct an object is proportional to its intrinsic dimension rather than the ambient dimension, thus confirming prior folklore.  See Table~\ref{tab:sum} for a summary of these results.

\begin{table}[t]
\centering
\begin{tabular}{||c|c|c||}\hline
 Underlying model & Convex heuristic & $\#$ Gaussian measurements \\ \hline\hline
 $s$-sparse vector in $\R^p$ & $\ell_1$ norm & $2s \log(p/s)+ 5s/4 $ \\ \hline
 $m \times m$ rank-$r$ matrix & nuclear norm & $3r(2m-r)$ \\
 \hline
 sign-vector $\{-1,+1\}^p$ & $\ell_\infty$ norm & $p / 2$\\ \hline
 $m \times m$ permutation matrix & norm induced by Birkhoff polytope & $9 \,m \log (m)$\\
 \hline
 $m \times m$ orthogonal matrix & spectral norm & $(3 m^2 - m)/4$\\
 \hline
\end{tabular}
\caption{A summary of the recovery bounds obtained using Gaussian width arguments.} \label{tab:sum}
\end{table}

Although our conditions for recovery and bounds on the number of measurements hold generally, we note that it may not be possible to obtain a computable representation for the convex hull $\ch(\A)$ of an arbitrary set of atoms $\A$.  This leads us to another important theme of this paper, which we discuss in Section~\ref{sec:rep}, on the connection between algebraic structure in $\A$ and the semidefinite representability of the convex hull $\ch(\A)$.  In particular when $\A$ is an algebraic variety the convex hull $\ch(\A)$ can be approximated as (the projection of) a set defined by linear matrix inequalities.  Thus the resulting atomic norm minimization heuristic can be solved via semidefinite programming.  A second issue that arises in practice is that even with algebraic structure in $\A$ the semidefinite representation of $\ch(\A)$ may not be computable in polynomial time, which makes the atomic norm minimization problem intractable to solve.  A prominent example here is the tensor nuclear norm ball, obtained by taking the convex hull of the rank-one tensors.  In order to address this problem we give a hierarchy of semidefinite relaxations using \emph{theta bodies} that approximate the original (intractable) atomic norm minimization problem \cite{GouPT2010}.  We also highlight that while these semidefinite relaxations are more tractable to solve, we require more measurements for exact recovery of the underlying model than if we solve the original intractable atomic norm minimization problem.  Hence there is a tradeoff between the complexity of the recovery algorithm and the number of measurements required for recovery.  We illustrate this tradeoff with the cut polytope and its relaxations.

\textbf{Outline} Section~\ref{sec:def} describes the construction of the atomic norm, gives several examples of applications in which these norms may be useful to recover simple models, and provides general conditions for recovery by minimizing the atomic norm.  In Section~\ref{sec:gaussian} we investigate the number of generic measurements for exact or robust recovery using atomic norm minimization, and give estimates in a number of settings by analyzing the Gaussian width of certain tangent cones.  We address the problem of semidefinite representability and tractable relaxations of the atomic norm in Section~\ref{sec:rep}.  Section~\ref{sec:comp} describes some algorithmic issues as well as a few simulation results, and we conclude with a discussion and open questions in Section~\ref{sec:conc}.



\section{Atomic Norms and Convex Geometry}
\label{sec:def}

In this section we describe the construction of an atomic norm from a collection of simple atoms.  In addition we give several examples of atomic norms, and discuss their properties in the context of solving ill-posed linear inverse problems.  We denote the Euclidean norm by $\|\cdot\|$.

\subsection{Definition}
\label{subsec:def}

Let $\A$ be a collection of atoms that is a compact subset of $\R^p$.  We will assume throughout this paper that no element $\ba \in \A$ lies in the convex hull of the other elements $\ch(\A \backslash \ba)$, i.e., the elements of $\A$ are the extreme points of $\ch(\A)$.  Let $\|\bx\|_\A$ denote the gauge of $\A$ \cite{Roc1996}:
\begin{equation}
	\|\bx\|_{\A} = \inf\{t>0 ~:~ x \in t~ \ch(\A)\}. \label{eq:atnorm}
\end{equation}
Note that the gauge is always a convex, extended-real valued function for any set $\A$.  By convention this function evaluates to $+\infty$ if $\bx$ does not lie in the affine hull of $\ch(\A)$.  We will assume without loss of generality that the centroid of $\ch(\A)$ is at the origin, as this can be achieved by appropriate recentering.  With this assumption the gauge function can be rewritten as \cite{Bon1991}:
\begin{equation*}
\|\bx\|_{\A} = \inf\left\{ \sum_{\ba \in\A} c_\ba  ~:~ \bx = \sum_{\ba\in \A} c_\ba \ba, ~~ c_\ba \geq 0 ~\forall \ba \in \A\right\}.
\end{equation*}
If $\A$ is centrally symmetric about the origin (i.e., $\ba \in \A$ if and only if $-\ba \in \A$) we have that $\|\cdot\|_\A$ is a norm, which we call the \emph{atomic norm} induced by $\A$.  The support function of $\A$ is given as:
\begin{equation}
\|\bx\|_\A^\ast = \sup \left\{\langle \bx, \ba \rangle ~ : ~ \ba \in \A  \right\}. \label{eq:atnormd}
\end{equation}
If $\|\cdot\|_\A$ is a norm the support function $\|\cdot\|^\ast_\A$ is the dual norm of this atomic norm.  From this definition we see that the unit ball of $\|\cdot\|_\A$ is equal to $\ch(\A)$.  In many examples of interest the set $\A$ is not centrally symmetric, so that the gauge function does not define a norm.  However our analysis is based on the underlying convex geometry of $\ch(\A)$, and our results are applicable even if $\|\cdot\|_\A$ does not define a norm. Therefore, with an abuse of terminology we generally refer to $\|\cdot\|_\A$ as the atomic norm of the set $\A$ even if $\|\cdot\|_\A$ is not a norm.  We note that the duality characterization between \eqref{eq:atnorm} and \eqref{eq:atnormd} when $\|\cdot\|_\A$ is a norm is in fact applicable even in infinite-dimensional Banach spaces by Bonsall's atomic decomposition theorem \cite{Bon1991}, but our focus is on the finite-dimensional case in this work.  We investigate in greater detail the issues of representability and efficient approximation of these atomic norms in Section~\ref{sec:rep}.


Equipped with a convex penalty function given a set of atoms, we propose a convex optimization method to recover a ``simple'' model given limited linear measurements.  Specifically suppose that $\bxs$ is formed according to \eqref{eq:simp1} from a set of atoms $\A$.  Further suppose that we have a known linear map $\Phi: \R^p \rightarrow \R^n$, and we have linear information about $\bxs$ as follows:
\begin{equation}
\by = \Phi \bxs.
\end{equation}
The goal is to reconstruct $\bxs$ given $\by$.  We consider the following convex formulation to accomplish this task:
\begin{equation}
\begin{aligned}
\hat{\bx} = \arg \min_{\bx} & ~~~ \|\bx\|_\A \\ \mbox{s.t.} & ~~~ \by = \Phi \bx.
\end{aligned}
\label{eq:atomic-norm-primal}
\end{equation}
When $\A$ is the set of one-sparse atoms this problem reduces to standard $\ell_1$ norm minimization.  Similarly when $\A$ is the set of rank-one matrices this problem reduces to nuclear norm minimization.  More generally if the atomic norm $\|\cdot\|_\A$ is tractable to evaluate, then \eqref{eq:atomic-norm-primal} potentially offers an efficient convex programming formulation for reconstructing $\bxs$ from the limited information $\by$.  The \emph{dual problem} of \eqref{eq:atomic-norm-primal} is given as follows:
\begin{equation}
\begin{aligned}
\max_{\bz} & ~~~ \by^T \bz \\ \mbox{s.t.} & ~~~ \|\Phi^\dag \bz\|^\ast_\A \leq 1.
\end{aligned}
\label{eq:atomic-norm-dual}
\end{equation}
Here $\Phi^\dag$ denotes the adjoint (or transpose) of the linear measurement map $\Phi$.

The convex formulation \eqref{eq:atomic-norm-primal} can be suitably modified in case we only have access to inaccurate, noisy information.  Specifically suppose that we have noisy measurements $\by = \Phi \bxs + \omega$ where $\omega$ represents the noise term.  A natural convex formulation is one in which the constraint $\by = \Phi \bx$ of \eqref{eq:atomic-norm-primal} is replaced by the relaxed constraint $\|\by - \Phi \bx\| \leq \delta$, where $\delta$ is an upper bound on the size of the noise $\omega$:
\begin{equation}
\begin{aligned}
\hat{\bx} = \arg \min_{\bx} & ~~~ \|\bx\|_\A \\ \mbox{s.t.} & ~~~ \|\by - \Phi \bx\| \leq \delta.
\end{aligned}
\label{eq:noisy-atomic-norm-primal}
\end{equation}

We say that we have \emph{exact recovery} in the noise-free case if $\hat{\bx} = \bxs$ in \eqref{eq:atomic-norm-primal}, and \emph{robust recovery} in the noisy case if the error $\|\hat{\bx}-\bxs\|$ is small in \eqref{eq:noisy-atomic-norm-primal}.  In Section~\ref{subsec:reccond} and Section~\ref{sec:gaussian} we give conditions under which the atomic norm heuristics \eqref{eq:atomic-norm-primal} and \eqref{eq:noisy-atomic-norm-primal} recover $\bxs$ exactly or approximately.  Atomic norms have found fruitful applications in problems in approximation theory of various function classes \cite{Pis1981,Jon1992,Bar1993,DeVT1996}.  However this prior body of work was concerned with infinite-dimensional Banach spaces, and none of these references consider nor provide recovery guarantees that are applicable in our setting.

%

\subsection{Examples}
\label{subsec:ex}

Next we provide several examples of atomic norms that can be viewed as special cases of the construction above.  These norms are obtained by convexifying atomic sets that are of interest in various applications.

\textbf{Sparse vectors.}  The problem of recovering sparse vectors from limited measurements has received a great deal of attention, with applications in many problem domains.  In this case the atomic set $\A \subset \mathbb{R}^p$ can be viewed as the set of unit-norm one-sparse vectors $\{\pm \mathbf{e}_i\}_{i=1}^p$, and $k$-sparse vectors in $\mathbb{R}^p$ can be constructed using a linear combination of $k$ elements of the atomic set.  In this case it is easily seen that the convex hull $\ch(\A)$ is given by the \emph{cross-polytope} (i.e., the unit ball of the $\ell_1$ norm), and the atomic norm $\|\cdot\|_\A$ corresponds to the $\ell_1$ norm in $\mathbb{R}^p$.

\textbf{Low-rank matrices.}  Recovering low-rank matrices from limited information is also a problem that has received considerable attention as it finds applications in problems in statistics, control, and machine learning.  The atomic set $\A$ here can be viewed as the set of rank-one matrices of unit-Euclidean-norm.  The convex hull $\ch(\A)$ is the \emph{nuclear norm ball} of matrices in which the sum of the singular values is less than or equal to one.

\textbf{Sparse and low-rank matrices.} The problem of recovering a sparse matrix and a low-rank matrix given information about their sum arises in a number of model selection and system identification settings.  The corresponding atomic norm is constructed by taking the convex hull of an atomic set obtained via the union of rank-one matrices and (suitably scaled) one-sparse matrices.  This norm can also be viewed as the \emph{infimal convolution} of the $\ell_1$ norm and the nuclear norm, and its properties have been explored in \cite{ChaSPW2011,CanLMW2011}.

\textbf{Permutation matrices.}  A problem of interest in a ranking context \cite{JagS2010} or an object tracking context is that of recovering permutation matrices from partial information.  Suppose that a small number $k$ of rankings of $m$ candidates is preferred by a population.  Such preferences can be modeled as the sum of a few $m \times m$ permutation matrices, with each permutation corresponding to a particular ranking.  By conducting surveys of the population one can obtain partial linear information of these preferred rankings.  The set $\A$ here is the collection of permutation matrices (consisting of $m!$ elements), and the convex hull $\ch(\A)$ is the \emph{Birkhoff polytope} or the set of doubly stochastic matrices \cite{Zie1995}.  The centroid of the Birkhoff polytope is the matrix $\ones \ones^T / m$, so it needs to be recentered appropriately.  We mention here recent work by Jagabathula and Shah \cite{JagS2010} on recovering a sparse function over the symmetric group (i.e., the sum of a few permutation matrices) given partial Fourier information; although the algorithm proposed in \cite{JagS2010} is tractable it is not based on convex optimization.

\textbf{Binary vectors.}  In integer programming one is often interested in recovering vectors in which the entries take on values of $\pm 1$.  Suppose that there exists such a sign-vector, and we wish to recover this vector given linear measurements.  This corresponds to a version of the multi-knapsack problem \cite{ManR2009}.  In this case $\A$ is the set of all sign-vectors, and the convex hull $\ch(\A)$ is the \emph{hypercube} or the unit ball of the $\ell_\infty$ norm.  The image of this hypercube under a linear map is also referred to as a zonotope \cite{Zie1995}.

\textbf{Vectors from lists.}  Suppose there is an unknown vector $\bx \in \mathbb{R}^p$, and that we are given the entries of this vector without any information about the locations of these entries.  For example if $\bx = [3 ~ 1 ~ 2 ~ 2 ~ 4]'$, then we are only given the list of numbers $\{1, 2, 2, 3, 4\}$ without their positions in $\bx$.  Further suppose that we have access to a few linear measurements of $\bx$.  Can we recover $\bx$ by solving a convex program?  Such a problem is of interest in recovering partial rankings of elements of a set.  An extreme case is one in which we only have two preferences for rankings, i.e., a vector in $\{1,2\}^p$ composed only of one's and two's, which reduces to a special case of the problem above of recovering binary vectors (in which the number of entries of each sign is fixed). For this problem the set $\A$ is the set of all permutations of $\bx$ (which we know since we have the list of numbers that compose $\bx$), and the convex hull $\ch(\A)$ is the \emph{permutahedron} \cite{Zie1995,SanSS2009}.  As with the Birkhoff polytope, the permutahedron also needs to be recentered about the point $\ones^T \bx / p$.

\textbf{Matrices constrained by eigenvalues.}  This problem is in a sense the non-commutative analog of the one above.  Suppose that we are given the eigenvalues $\lambda$ of a symmetric matrix, but no information about the eigenvectors.  Can we recover such a matrix given some additional linear measurements?  In this case the set $\A$ is the set of all symmetric matrices with eigenvalues $\lambda$, and the convex hull $\ch(\A)$ is given by the \emph{Schur-Horn orbitope} \cite{SanSS2009}.

\textbf{Orthogonal matrices.}  In many applications matrix variables are constrained to be orthogonal, which is a non-convex constraint and may lead to computational difficulties.  We consider one such simple setting in which we wish to recover an orthogonal matrix given limited information in the form of linear measurements.  In this example the set $\A$ is the set of $m \times m$ orthogonal matrices, and $\ch(\A)$ is the \emph{spectral norm ball}.

\textbf{Measures.} Recovering a measure given its moments is another question of interest that arises in system identification and statistics.  Suppose one is given access to a linear combination of moments of an atomically supported measure.  How can we reconstruct the support of the measure?  The set $\A$ here is the moment curve, and its convex hull $\ch(\A)$ goes by several names including the \emph{Caratheodory orbitope} \cite{SanSS2009}.  Discretized versions of this problem correspond to the set $\A$ being a finite number of points on the moment curve;  the convex hull $\ch(\A)$ is then a \emph{cyclic polytope} \cite{Zie1995}.

\textbf{Cut matrices.}  In some problems one may wish to recover low-rank matrices in which the entries are constrained to take on values of $\pm 1$.  Such matrices can be used to model basic user preferences, and are of interest in problems such as collaborative filtering \cite{SreS2005}.  The set of atoms $\A$ could be the set of rank-one signed matrices, i.e., matrices of the form $\bz \bz^T$ with the entries of $\bz$ being $\pm 1$.  The convex hull $\ch(\A)$ of such matrices is the \emph{cut polytope} \cite{DezL1997}.  An interesting issue that arises here is that the cut polytope is in general intractable to characterize.  However there exist several well-known tractable semidefinite relaxations to this polytope \cite{DezL1997,GoeW1995}, and one can employ these in constructing efficient convex programs for recovering cut matrices.  We discuss this point in greater detail in Section~\ref{subsec:tradeoff}.

\textbf{Low-rank tensors.} Low-rank tensor decompositions play an important role in numerous applications throughout signal processing and machine learning \cite{KolB2009}.  Developing computational tools to recover low-rank tensors is therefore of great interest.  In principle we could solve a tensor nuclear norm minimization problem, in which the tensor nuclear norm ball is obtained by taking the convex hull of rank-one tensors.  A computational challenge here is that the tensor nuclear norm is in general intractable to compute;  in order to address this problem we discuss further convex relaxations to the tensor nuclear norm using theta bodies in Section~\ref{sec:rep}.  A number of additional technical issues also arise with low-rank tensors including the non-existence in general of a singular value decomposition analogous to that for matrices \cite{Kol2001}, and the difference between the rank of a tensor and its border rank \cite{deSL2008}.

\textbf{Nonorthogonal factor analysis.}  Suppose that a data matrix admits a factorization $X=AB$.  The matrix nuclear norm heuristic will find a factorization into \emph{orthogonal} factors in which the columns of $A$ and rows of $B$ are mutually orthogonal.  However if \emph{a priori} information is available about the factors, precision and recall could be improved by enforcing such priors.  These priors may sacrifice orthogonality, but the factors might better conform with assumptions about how the data are generated. For instance in some applications one might know in advance that the factors should only take on a discrete set of values~\cite{SreS2005}.  In this case, we might try to fit a sum of rank-one matrices that are bounded in $\ell_\infty$ norm rather than in $\ell_2$ norm.  Another prior that commonly arises in practice is that the factors are non-negative (i.e., in non-negative matrix factorization).  These and other priors on the basic rank-one summands induce different norms on low-rank models than the standard nuclear norm \cite{Faz2002}, and may be better suited to specific applications.


\subsection{Background on Tangent and Normal Cones}
\label{subsec:bgtn}

In order to properly state our results, we recall some basic concepts from convex analysis. A convex set $\mathcal{C}$ is a \emph{cone} if it is closed under positive linear combinations.  The polar $\mathcal{C}^\ast$ of a cone $\mathcal{C}$ is the cone
\begin{equation*}
\mathcal{C}^\ast = \{ x \in \R^p ~:~ \langle x,z\rangle \leq 0~\forall z\in \mathcal{C}\}.
\end{equation*}
Given some nonzero $\bx \in \R^p$ we define the \emph{tangent cone} at $\bx$ with respect to the scaled unit ball $\|\bx\|_\A \ch(\A)$ as
\begin{equation}
T_\A(\bx) = \mathrm{cone}\{\mathbf{z}-\bx~:~ \|\mathbf{z}\|_\A \leq \|\bx\|_\A\}. \label{eq:tcone}
\end{equation}
The cone $T_\A(\bx)$ is equal to the set of \emph{descent directions} of the atomic norm $\|\cdot\|_\A$ at the point $\bx$, i.e., the set of all directions $\mathbf{d}$ such that the directional derivative is negative.


The \emph{normal cone} $N_{\A}(\bx)$ at $\bx$ with respect to the scaled unit ball $\|\bx\|_\A \ch(\A)$ is defined to be the set of all directions $\mathbf{s}$ that form obtuse angles with every descent direction of the atomic norm $\|\cdot\|_\A$ at the point $\bx$:
\begin{equation}
N_\A(\bx) = \{\mathbf{s} : \langle \mathbf{s},\mathbf{z}-\bx \rangle \leq 0 ~ \forall \mathbf{z} ~ \mathrm{s.t.~} \|\mathbf{z}\|_\A\leq \|\bx\|_\A\}. \label{eq:ncone}
\end{equation}
The normal cone is equal to the set of all normals of hyperplanes given by normal vectors $\mathbf{s}$ that support the scaled unit ball $\|\bx\|_\A \ch(\A)$ at $\bx$.  Observe that the polar cone of the tangent cone $T_\A(\bx)$ is the normal cone $N_\A(\bx)$ and vice-versa.  Moreover we have the basic characterization that the normal cone $N_{\A}(\bx)$ is the conic hull of the subdifferential of the atomic norm at $\bx$.

\subsection{Recovery Condition}
\label{subsec:reccond}

The following result gives a characterization of the favorable underlying geometry required for exact recovery.  Let $\n(\Phi)$ denote the nullspace of the operator $\Phi$.

\begin{proposition}\label{prop:null-intersection}
We have that $\hat{\bx} = \bxs$ is the unique optimal solution of \eqref{eq:atomic-norm-primal} if and only if $\n(\Phi) \cap T_{\A}(\bxs) = \{0\}$.
\end{proposition}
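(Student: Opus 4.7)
The plan is to prove the two directions of the biconditional separately by direct definition-chasing, since the statement really just translates primal optimality of $\bxs$ into a geometric condition on the nullspace of $\Phi$ and the set of directions along which $\|\cdot\|_\A$ does not increase at $\bxs$.

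For the ($\Leftarrow$) direction, I would fix an arbitrary feasible competitor $\bx'$, i.e., one with $\Phi \bx' = \by$ and $\|\bx'\|_\A \leq \|\bxs\|_\A$, and set $\mathbf{d} := \bx' - \bxs$. Linearity gives $\mathbf{d} \in \n(\Phi)$ since $\Phi \mathbf{d} = \by - \by = 0$. Taking $\alpha = 1$ and $\mathbf{z} = \bx'$ in the definition \eqref{eq:tcone} shows $\mathbf{d} \in T_\A(\bxs)$. The hypothesis then forces $\mathbf{d} = 0$, so $\bx' = \bxs$, proving uniqueness.

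For the ($\Rightarrow$) direction, I would argue the contrapositive: suppose there exists nonzero $\mathbf{d} \in \n(\Phi) \cap T_\A(\bxs)$. By the definition of $T_\A(\bxs)$, I can write $\mathbf{d} = \alpha(\mathbf{z} - \bxs)$ with $\alpha > 0$ and $\|\mathbf{z}\|_\A \leq \|\bxs\|_\A$, and $\mathbf{z} \neq \bxs$ since $\mathbf{d} \neq 0$. Then $\Phi \mathbf{z} = \Phi \bxs + \alpha^{-1} \Phi \mathbf{d} = \by$, so $\mathbf{z}$ is feasible. Either $\|\mathbf{z}\|_\A < \|\bxs\|_\A$, violating optimality of $\bxs$, or $\|\mathbf{z}\|_\A = \|\bxs\|_\A$, violating uniqueness. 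In either case $\bxs$ is not the unique optimum.

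The only mild subtlety is that, strictly speaking, $T_\A(\bxs)$ might be interpreted as the closed conic hull, in which case a given $\mathbf{d}$ in the cone is only a limit of elements of the form $\alpha_n(\mathbf{z}_n - \bxs)$ with $\|\mathbf{z}_n\|_\A \leq \|\bxs\|_\A$. Since $\A$ is assumed compact, $\ch(\A)$ is compact and the atomic norm is continuous, so the conic hull over the compact convex set $\{\mathbf{z} - \bxs : \|\mathbf{z}\|_\A \leq \|\bxs\|_\A\}$ is closed and the representation $\mathbf{d} = \alpha(\mathbf{z} - \bxs)$ is attained; alternatively, I would pass to the limit along the sequence $\mathbf{z}_n$, noting that $\Phi \mathbf{d} = 0$ gives $\Phi \mathbf{z}_n \to \by$ and extracting a convergent subsequence with limit $\mathbf{z}_\infty$ satisfying $\Phi \mathbf{z}_\infty = \by$ and $\|\mathbf{z}_\infty\|_\A \leq \|\bxs\|_\A$, which is the feasible competitor needed. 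No deeper obstacle is expected; this is essentially an unpacking of the definitions of ``feasible'', ``optimal'', and ``tangent cone''.
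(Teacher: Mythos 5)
Your main two-direction argument is correct and is essentially the paper's own proof: the paper likewise eliminates the equality constraint, parametrizes feasible points as $\bxs+\mathbf{d}$ with $\mathbf{d}\in\n(\Phi)$, uses the same tautology that $\|\bxs+\mathbf{d}\|_\A\le\|\bxs\|_\A$ forces $\mathbf{d}\in T_\A(\bxs)$, and for the converse relies (implicitly) on exactly the rescaling you spell out, namely that $\n(\Phi)$ is a subspace so $\mathbf{d}=\alpha(\mathbf{z}-\bxs)$ yields the feasible competitor $\mathbf{z}=\bxs+\alpha^{-1}\mathbf{d}$. Your write-up simply makes explicit a step the paper glosses over; otherwise the two proofs coincide.

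The closing ``mild subtlety'' paragraph, however, is wrong on both counts, and you should delete it rather than try to patch it. First, the set $K=\{\mathbf{z}-\bxs:\|\mathbf{z}\|_\A\le\|\bxs\|_\A\}$ always contains the origin (take $\mathbf{z}=\bxs$), and the conic hull of a compact convex set \emph{containing the origin} need not be closed: for the Euclidean ball in $\R^2$ with $\bxs=(1,0)$, $\mathrm{cone}(K)$ is the open half-plane $\{(u,v):u<0\}$ together with the origin. Second, your fallback limiting argument breaks precisely in the case it is meant to handle: if $\mathbf{d}=\lim_n \alpha_n(\mathbf{z}_n-\bxs)$ with $\alpha_n\to\infty$ and $\mathbf{z}_n\to\bxs$, the extracted limit is $\mathbf{z}_\infty=\bxs$, which is not a competitor, so no contradiction with unique optimality results. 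This gap is not repairable, because under the closed-cone reading the ``only if'' direction of the proposition is genuinely false: in the example above, take $\n(\Phi)$ to be the vertical axis; then $\bxs$ is the unique Euclidean-norm minimizer over $\bxs+\n(\Phi)$, yet the closure $\{(u,v):u\le 0\}$ of the tangent cone contains all of $\n(\Phi)$. So the proposition must be read, as the paper intends and as your main argument correctly does, with $\mathrm{cone}$ in \eqref{eq:tcone} denoting the plain (not closed) conic hull.
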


\begin{proof}
Eliminating the equality constraints in \eqref{eq:atomic-norm-primal} we have the equivalent optimization problem
\begin{equation*}
\min_{\mathbf{d}} ~\|\bxs + \mathbf{d}\|_\A \quad \mathrm{s.t.}~ \mathbf{d} \in \n(\Phi).
\end{equation*}
Suppose  $\n(\Phi)\cap T_{\A}(\bxs)=\{0\}$.  Since $\|\bxs + \mathbf{d}\|_\A\leq \|\bxs\|$ implies $\mathbf{d} \in T_{\A}(\bxs)$, we have that $\|\bxs + \mathbf{d}\|_\A > \|\bxs\|_\A$ for all $\mathbf{d} \in \n(\Phi)\setminus\{0\}$.   Conversely $\bxs$ is the unique optimal solution of \eqref{eq:atomic-norm-primal} if $\|\bxs + \mathbf{d}\|_\A > \|\bxs\|_\A$ for all $\mathbf{d} \in \n(\Phi)\setminus\{0\}$, which implies that $\mathbf{d} \not\in T_{\A}(\bxs)$.
\end{proof}

Proposition~\ref{prop:null-intersection} asserts that  the atomic norm heuristic succeeds if the nullspace of the sampling operator does not intersect the tangent cone $T_\A(\bxs)$ at $\bxs$.  In Section~\ref{sec:gaussian} we provide a characterization of tangent cones that determines the number of Gaussian measurements required to guarantee such an empty intersection.

A tightening of this empty intersection condition can also be used to address the noisy approximation problem.  The following proposition characterizes when $\bxs$ can be \emph{well-approximated} using the convex program \eqref{eq:noisy-atomic-norm-primal}.

\begin{proposition}\label{prop:noisy-recovery}
Suppose that we are given $n$ noisy measurements $\by = \Phi \bxs + \omega$ where $\| \omega\| \leq \delta$, and $\Phi: \R^p \rightarrow \R^n$.  Let $\hat{\bx}$ denote an optimal solution of \eqref{eq:noisy-atomic-norm-primal}.  Further suppose for all $\bz \in T_{\A}(\bxs)$ that we have $\|\Phi \bz\| \geq \epsilon\|\bz\|$.  Then $\|\hat{\bx}-\bxs\| \leq \frac{2\delta}{\epsilon}$.
\end{proposition}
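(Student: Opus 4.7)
The plan is to combine a feasibility argument, the definition of the tangent cone, and the triangle inequality with the restricted-injectivity hypothesis $\|\Phi\bz\|\geq\epsilon\|\bz\|$ on $T_\A(\bxs)$.

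First I would argue that the error $\mathbf{h} := \hat{\bx} - \bxs$ lies in the tangent cone $T_\A(\bxs)$. Since $\|\by - \Phi\bxs\| = \|\omega\| \leq \delta$, the true signal $\bxs$ itself is feasible for the program \eqref{eq:noisy-atomic-norm-primal}. Optimality of $\hat{\bx}$ then yields $\|\hat{\bx}\|_\A \leq \|\bxs\|_\A$. By the definition \eqref{eq:tcone} of $T_\A(\bxs)$, this immediately places $\mathbf{h} = \hat{\bx} - \bxs$ in $T_\A(\bxs)$, so the hypothesized lower bound applies to give $\|\Phi \mathbf{h}\| \geq \epsilon \|\mathbf{h}\|$.

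Next I would bound $\|\Phi\mathbf{h}\|$ from above by comparing both $\Phi\hat{\bx}$ and $\Phi\bxs$ to $\by$. Both $\hat{\bx}$ and $\bxs$ are feasible, so $\|\Phi\hat{\bx}-\by\|\leq\delta$ and $\|\Phi\bxs-\by\|=\|\omega\|\leq\delta$. The triangle inequality then gives
\begin{equation*}
\|\Phi \mathbf{h}\| = \|\Phi\hat{\bx} - \Phi\bxs\| \leq \|\Phi\hat{\bx} - \by\| + \|\by - \Phi\bxs\| \leq 2\delta.
\end{equation*}
Combining the two inequalities yields $\epsilon \|\mathbf{h}\| \leq 2\delta$, i.e.\ $\|\hat{\bx}-\bxs\|\leq 2\delta/\epsilon$, which is the desired bound.

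There is no real obstacle here; the argument is essentially a soft-tangent-cone analog of Proposition~\ref{prop:null-intersection}. The only point that warrants a brief check is the inclusion $\mathbf{h}\in T_\A(\bxs)$ from $\|\hat{\bx}\|_\A\leq\|\bxs\|_\A$, which follows directly from \eqref{eq:tcone} (the cone being generated by all $\mathbf{z}-\bxs$ with $\|\mathbf{z}\|_\A\leq\|\bxs\|_\A$, so in particular by $\hat{\bx}-\bxs$ itself).
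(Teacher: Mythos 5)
Your proof is correct and follows essentially the same route as the paper's: establish $\hat{\bx}-\bxs \in T_\A(\bxs)$ from optimality (the paper leaves the feasibility of $\bxs$ implicit, which you spell out), bound $\|\Phi(\hat{\bx}-\bxs)\|\leq 2\delta$ by the triangle inequality, and invoke the restricted gain hypothesis. No differences of substance.
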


\begin{proof}
The set of descent directions at $\bxs$ with respect to the atomic norm ball is given by the tangent cone $T_{\A}(\bxs)$.  The error vector $\hat{\bx} - \bxs$ lies in $T_{\A}(\bxs)$ because $\hat{\bx}$ is a minimal atomic norm solution, and hence $\|\hat{\bx}\|_\A \leq \|\bxs\|_\A$.  It follows by the triangle inequality that
\begin{equation}
\|\Phi(\hat{\bx}-\bxs)\| \leq \|\Phi\hat{\bx} - \by\| + \|\Phi \bxs - \by\| \leq 2 \delta.
\end{equation}
By assumption we have that
\begin{equation}
\|\Phi(\hat{\bx}-\bxs)\| \geq \epsilon \|\hat{\bx}-\bxs\|,
\end{equation}
which allows us to conclude that $\|\hat{\bx}-\bxs\|\leq \frac{2\delta}{\epsilon}$.
\end{proof}
Therefore, we need only concern ourselves with estimating the minimum value of $\frac{\|\Phi \bz\|}{\|\bz\|}$ for non-zero $\bz\in T_{\A}(\bxs)$.  We denote this quantity as the \emph{minimum gain} of the measurement operator $\Phi$ restricted to the cone $T_{\A}(\bxs)$.  In particular if this minimum gain is bounded away from zero, then the atomic norm heuristic also provides robust recovery when we have access to noisy linear measurements of $\bxs$.  Minimum gain conditions have been employed in recent recovery results via $\ell_1$-norm minimization, block-sparse vector recovery, and low-rank matrix reconstruction \cite{CanT2005,BicRT2009,VanB2009,NegRWY2010}.   All of these results rely heavily on strong decomposability conditions of the $\ell_1$ norm and the matrix nuclear norm.  However, there are several examples of atomic norms (for instance, the $\ell_\infty$ norm and the norm induced by the Birkhoff polytope) as specified in Section~\ref{subsec:ex} that do not satisfy such decomposability conditions. As we'll see in the sequel the more geometric viewpoint adopted in this paper provides a fruitful framework in which to analyze the recovery properties of general atomic norms.
%


\subsection{Why Atomic Norm?}
\label{subsec:why}

The atomic norm induced by a set $\A$ possesses a number of favorable properties that are useful for recovering ``simple'' models from limited linear measurements.  The key point to note from Section~\ref{subsec:reccond} is that the smaller the tangent cone at a point $\bxs$ with respect to $\ch(\A)$, the easier it is to satisfy the empty-intersection condition of Proposition~\ref{prop:null-intersection}.

Based on this observation it is desirable that points in $\ch(\A)$ with smaller tangent cones correspond to simpler models, while points in $\ch(\A)$ with larger tangent cones generally correspond to more complicated models.  The construction of $\ch(\A)$ by taking the convex hull of $\A$ ensures that this is the case.  The extreme points of $\ch(\A)$ correspond to the simplest models, i.e., those models formed from a single element of $\A$.  Further the low-dimensional faces of $\ch(\A)$ consist of those elements that are obtained by taking linear combinations of a few basic atoms from $\A$.  These are precisely the properties desired as points lying in these low-dimensional faces of $\ch(\A)$ have smaller tangent cones than those lying on larger faces.

We also note that the atomic norm is, in some sense, the best possible convex heuristic for recovering simple models.  Any reasonable heuristic penalty function should be constant on the set of atoms $\A$.  This ensures that no atom is preferred over any other.  Under this assumption, we must have that for any $\ba \in \A$, $\ba'-\ba$ must be a descent direction for all $\ba'\in\A$.  The best convex penalty function is one in which the cones of descent directions at $\ba\in\A$ are as small as possible.  This is because, as described above, smaller cones  are more likely to satisfy the empty intersection condition required for exact recovery.  Since the tangent cone at $\ba \in \A$ with respect to $\ch(\A)$ is precisely the conic hull of $\ba'-\ba$ for $\ba'\in \A$, the atomic norm is the best convex heuristic for recovering models where simplicity is dictated by the set $\A$.


Our reasons for proposing the atomic norm as a useful convex heuristic are quite different from previous justifications of the $\ell_1$ norm and the nuclear norm.  In particular let $f: \R^p \rightarrow \R$ denote the cardinality function that counts the number of nonzero entries of a vector.  Then the $\ell_1$ norm is the \emph{convex envelope} of $f$ restricted to the unit ball of the $\ell_\infty$ norm, i.e., the best convex underestimator of $f$ restricted to vectors in the $\ell_\infty$-norm ball.  This view of the $\ell_1$ norm in relation to the function $f$ is often given as a justification for its effectiveness in recovering sparse vectors.  However if we consider the convex envelope of $f$ restricted to the Euclidean norm ball, then we obtain a very different convex function than the $\ell_1$ norm!  With more general atomic sets, it may not be clear \emph{a priori} what the bounding set should be in deriving the convex envelope.  In contrast the viewpoint adopted in this paper leads to a natural, unambiguous construction of the $\ell_1$ norm and other general atomic norms.  Further as explained above it is the favorable \emph{facial structure} of the atomic norm ball that makes the atomic norm a suitable convex heuristic to recover simple models, and this connection is transparent in the definition of the atomic norm.


\section{Recovery from Generic Measurements}
\label{sec:gaussian}

We consider the question of using the convex program \eqref{eq:atomic-norm-primal} to recover ``simple'' models formed according to \eqref{eq:simp1} from a \emph{generic} measurement operator or map $\Phi: \R^p \rightarrow \R^n$.  Specifically, we wish to compute estimates on the number of measurements $n$ so that we have exact recovery using \eqref{eq:atomic-norm-primal} for \emph{most} operators comprising of $n$ measurements.  That is, the measure of $n$-measurement operators for which recovery fails using \eqref{eq:atomic-norm-primal} must be exponentially small.  In order to conduct such an analysis we study random \emph{Gaussian} maps whose entries are independent and identically distributed Gaussians.  These measurement operators have a nullspace that is uniformly distributed among the set of all $(p-n)$-dimensional subspaces in $\R^p$.  In particular we analyze when such operators satisfy the conditions of Proposition~\ref{prop:null-intersection} and Proposition~\ref{prop:noisy-recovery} for exact recovery.

\subsection{Recovery Conditions based on Gaussian Width}
\label{subsec:width}


Proposition~\ref{prop:null-intersection} requires that the nullspace of the measurement operator $\Phi$ must miss the tangent cone $T_{\A}(\bxs)$.  Gordon \cite{Gor1988} gave a solution to the problem of characterizing the probability that a random subspace (of some fixed dimension) distributed uniformly misses a cone.  We begin by defining the Gaussian width of a set, which plays a key role in Gordon's analysis.
\begin{definition}
The \emph{Gaussian width} of a set $S \subset \R^p$ is defined as:
\begin{equation*}
w(S) := \E_\bg\left[\sup_{\bz \in S} ~ \bg^T \bz \right],
\end{equation*}
where $\bg \sim \mathcal{N}(0,I)$ is a vector of independent zero-mean unit-variance Gaussians.
\end{definition}
Gordon characterized the likelihood that a random subspace misses a cone $\mathcal{C}$ purely in terms of the dimension of the subspace and the Gaussian width $w(\mathcal{C} \cap \Sp^{p-1})$, where $\Sp^{p-1} \subset \R^p$ is the unit sphere.  Before describing Gordon's result formally, we introduce some notation.  Let $\lambda_k$ denote the expected length of a $k$-dimensional Gaussian random vector.  By elementary integration, we have that  $\lambda_k = \sqrt{2}\Gamma(\tfrac{k+1}{2})/\Gamma(\tfrac{k}{2})$. Further by induction one can show that $\lambda_k$ is tightly bounded as $\frac{k}{\sqrt{k+1}}\leq \lambda_k \leq \sqrt{k}$.

The main idea underlying Gordon's theorem is a bound on the minimum gain of an operator restricted to a set.  Specifically, recall that $\n(\Phi) \cap T_\A(\bxs) = \{0\}$ is the condition required for recovery by Proposition~\ref{prop:null-intersection}.  Thus if we have that the minimum gain of $\Phi$ restricted to vectors in the set $T_\A(\bxs) \cap \Sp^{p-1}$ is bounded away from zero, then it is clear that $\n(\Phi) \cap T_\A(\bxs) = \emptyset$.  We refer to such minimum gains restricted to a subset of the sphere as \emph{restricted minimum singular values}, and the following theorem of Gordon gives a bound these quantities:

%

\begin{theorem}[Cor.~1.2,~\cite{Gor1988}] \label{theo:escape}
Let $\Omega$ be a closed subset of $\Sp^{p-1}$.  Let $\Phi: \R^p \rightarrow \R^n$ be a random map with i.i.d. zero-mean Gaussian entries having variance one. Then
\begin{equation}
	\E\left[ \min_{\bz\in \Omega} \|\Phi \bz\|_2\right]  \geq \lambda_n - w(\Omega)\,.
\end{equation}
\end{theorem}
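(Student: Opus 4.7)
The plan is to derive the bound from Gordon's Gaussian min-max comparison inequality, by writing the restricted minimum singular value as a min-max of a bilinear Gaussian process and comparing it to an auxiliary additive Gaussian process whose min-max admits a closed-form expectation.

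First I would rewrite
\begin{equation*}
\min_{\bz \in \Omega}\|\Phi \bz\|_2 \;=\; \min_{\bz\in\Omega}\max_{\bu\in\Sp^{n-1}} \bu^T \Phi \bz \;=:\; \min_{\bz\in\Omega}\max_{\bu\in\Sp^{n-1}} X_{\bz,\bu},
\end{equation*}
so that $X_{\bz,\bu}$ is a centered Gaussian process indexed by $(\bz,\bu) \in \Omega \times \Sp^{n-1}$ with covariance $\E\, X_{\bz,\bu} X_{\bz',\bu'} = (\bz^T\bz')(\bu^T\bu')$. As a comparison process, I would introduce independent standard Gaussian vectors $\bg \in \R^p$ and $\bh \in \R^n$ and set
\begin{equation*}
Y_{\bz,\bu} \;=\; \bg^T\bz + \bh^T\bu.
\end{equation*}

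Next I would verify the incremental covariance hypotheses of Gordon's min-max comparison lemma. A direct computation gives the identity
\begin{equation*}
\E\bigl(Y_{\bz,\bu} - Y_{\bz',\bu'}\bigr)^2 - \E\bigl(X_{\bz,\bu} - X_{\bz',\bu'}\bigr)^2 \;=\; 2\bigl(1 - \bz^T\bz'\bigr)\bigl(1 - \bu^T\bu'\bigr),
\end{equation*}
which is nonnegative (since $\bz,\bz',\bu,\bu'$ are unit vectors) and vanishes precisely when $\bz = \bz'$. Thus on-``row'' increment variances match exactly, while off-``row'' increment variances of $Y$ dominate those of $X$. These are exactly the hypotheses under which Gordon's comparison inequality yields
\begin{equation*}
\E\Bigl[\min_{\bz\in\Omega}\max_{\bu\in\Sp^{n-1}} X_{\bz,\bu}\Bigr] \;\geq\; \E\Bigl[\min_{\bz\in\Omega}\max_{\bu\in\Sp^{n-1}} Y_{\bz,\bu}\Bigr].
\end{equation*}

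Finally I would evaluate the right-hand side in closed form. Because $Y_{\bz,\bu}$ is additive in $\bz$ and $\bu$, the inner max and outer min decouple:
\begin{equation*}
\min_{\bz\in\Omega}\max_{\bu\in\Sp^{n-1}} Y_{\bz,\bu} \;=\; \min_{\bz\in\Omega} \bg^T\bz \;+\; \|\bh\|_2.
\end{equation*}
Taking expectations and using $\E\|\bh\|_2 = \lambda_n$ together with the reflection symmetry of $\bg$ (so that $\E \min_{\bz\in\Omega} \bg^T\bz = -\E \max_{\bz\in\Omega} \bg^T\bz = -w(\Omega)$) gives $\lambda_n - w(\Omega)$, which combined with the comparison above proves the theorem. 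The principal technical ingredient is Gordon's min-max comparison lemma itself (a refinement of Slepian's inequality), and the main ``trick'' is the clean factorization of the increment gap as $2(1-\bz^T\bz')(1-\bu^T\bu')$, which is what makes the additive process $Y$ the right comparison object and allows its min-max expectation to be evaluated explicitly.
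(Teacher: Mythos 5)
The paper itself offers no proof of this statement: it is imported verbatim as Corollary~1.2 of Gordon's paper \cite{Gor1988} and used as a black box, so there is no internal argument to compare against. Your derivation is essentially Gordon's original one: writing the restricted singular value as $\min_{\bz\in\Omega}\max_{\bu\in\Sp^{n-1}} \bu^T\Phi\bz$, comparing the bilinear process against the decoupled additive process $\bg^T\bz+\mathbf{h}^T\bu$, and evaluating the latter's min-max in closed form as $\lambda_n - w(\Omega)$; your increment identity $\E(Y_{\bz,\bu}-Y_{\bz',\bu'})^2 - \E(X_{\bz,\bu}-X_{\bz',\bu'})^2 = 2(1-\bz^T\bz')(1-\bu^T\bu')$ is indeed the crux, and the direction of your conclusion (bilinear min-max dominates additive min-max in expectation) is the right one. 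Two small points deserve care. First, that gap vanishes not ``precisely when $\bz=\bz'$'' but whenever $\bz=\bz'$ \emph{or} $\bu=\bu'$; this is harmless, since the comparison lemma only needs nonnegativity everywhere together with equality on matching rows. Second, the two processes do not have equal variances ($\E X_{\bz,\bu}^2 = 1$ while $\E Y_{\bz,\bu}^2 = 2$), so if you invoke the covariance form of Gordon's comparison lemma (which assumes equal variances) you must first add an independent standard scalar Gaussian $\gamma$ to the bilinear process; this changes neither its increments nor its expected min-max (since $\gamma$ is a common additive term with mean zero) and restores equal variances. Alternatively, cite the increment-only, Sudakov--Fernique-type expectation version of Gordon's min-max theorem, which is what your phrasing implicitly assumes. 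Either patch is routine, and with it your proof is complete and correct.
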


Theorem~\ref{theo:escape} allows us to characterize exact recovery in the noise-free case using the convex program \eqref{eq:atomic-norm-primal}, and robust recovery in the noisy case using the convex program \eqref{eq:noisy-atomic-norm-primal}.  Specifically, we consider the number of measurements required for exact or robust recovery when the measurement map $\Phi: \R^p \rightarrow \R^n$ consists of i.i.d. zero-mean Gaussian entries having variance $1/n$.  The normalization of the variance ensures that the columns of $\Phi$ are approximately unit-norm, and is necessary in order to properly define a signal-to-noise ratio.  The following corollary summarizes the main results of interest in our setting:

\begin{corollary} \label{corl:width}
Let $\Phi: \R^p \rightarrow \R^n$ be a random map with i.i.d. zero-mean Gaussian entries having variance $1/n$.  Further let $\Omega = T_\A(\bxs) \cap \Sp^{p-1}$ denote the spherical part of the tangent cone $T_\A(\bxs)$.

\begin{enumerate}
\item  Suppose that we have measurements $\by = \Phi \bxs$ and solve the convex program \eqref{eq:atomic-norm-primal}. Then $\bxs$ is the unique optimum of \eqref{eq:atomic-norm-primal} with probability at least $1-\exp\left(-\tfrac{1}{2}\left[\lambda_n - w(\Omega)\right]^2\right)$ provided	\begin{equation*}
	n \geq w(\Omega)^2+1\,.
	\end{equation*}

\item Suppose that we have noisy measurements $\by = \Phi \bxs + \omega$, with the noise $\omega$ bounded as $\|\omega\| \leq \delta$, and that we solve the convex program \eqref{eq:noisy-atomic-norm-primal}.  Letting $\hat{\bx}$ denote the optimal solution of \eqref{eq:noisy-atomic-norm-primal}, we have that $\|\bxs - \hat{\bx}\| \leq \frac{2 \delta}{\epsilon}$ with probability at least $1-\exp\left(-\tfrac{1}{2}\left[\lambda_n - w(\Omega) -\sqrt{n}\epsilon\right]^2\right)$ provided
	\begin{equation*}
	n \geq \frac{w(\Omega)^2+3/2}{(1-\epsilon)^2} \,.
	\end{equation*}
\end{enumerate}
\end{corollary}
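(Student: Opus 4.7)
The plan is to deduce both parts of the corollary from a single high-probability lower bound on the \emph{restricted minimum singular value} $\sigma(\Phi) := \min_{\bz \in \Omega} \|\Phi \bz\|$, which is precisely the quantity controlled by Theorem~\ref{theo:escape}. Since the corollary normalizes entries to have variance $1/n$, I would first set $\tilde{\Phi} := \sqrt{n}\,\Phi$, so that $\tilde{\Phi}$ has unit-variance i.i.d.\ Gaussian entries and $\sigma(\Phi) = \sigma(\tilde{\Phi})/\sqrt{n}$. Theorem~\ref{theo:escape} then supplies $\E[\sigma(\tilde{\Phi})] \geq \lambda_n - w(\Omega)$. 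To upgrade this in-expectation estimate to a tail bound I would apply the Borell--Sudakov--Tsirelson Gaussian concentration inequality to the map $\tilde{\Phi} \mapsto \sigma(\tilde{\Phi})$, viewed as a function of the $np$ i.i.d.\ standard Gaussian entries of $\tilde{\Phi}$. This map is $1$-Lipschitz: for each fixed unit vector $\bz$ the map $\tilde{\Phi} \mapsto \|\tilde{\Phi}\bz\|$ is $\|\bz\| = 1$ Lipschitz, and a pointwise infimum of $1$-Lipschitz functions is $1$-Lipschitz. Combining with the expectation bound yields
\[
\P\bigl(\sigma(\tilde{\Phi}) \leq \lambda_n - w(\Omega) - u\bigr) \leq \exp(-u^2/2) \qquad \text{for all } u \geq 0.
\]

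For part~1, take $u = 0$. The event $\sigma(\tilde{\Phi}) > 0$ is exactly $\n(\Phi) \cap T_\A(\bxs) \cap \Sp^{p-1} = \emptyset$, which by conic homogeneity is equivalent to $\n(\Phi) \cap T_\A(\bxs) = \{\mathbf{0}\}$; Proposition~\ref{prop:null-intersection} then forces $\hat{\bx} = \bxs$. The probability estimate is nontrivial when $\lambda_n > w(\Omega)$, and the classical bound $\lambda_n \geq n/\sqrt{n+1}$ combined with the hypothesis $n \geq w(\Omega)^2 + 1$ gives $\lambda_n^2 \geq n^2/(n+1) > w(\Omega)^2$ after a short rearrangement. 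For part~2, I would instead take $u = \lambda_n - w(\Omega) - \sqrt{n}\,\epsilon$; the event $\sigma(\tilde{\Phi}) \geq \sqrt{n}\,\epsilon$ is equivalent to $\sigma(\Phi) \geq \epsilon$, and Proposition~\ref{prop:noisy-recovery} then immediately yields $\|\hat{\bx} - \bxs\| \leq 2\delta/\epsilon$. The probability estimate follows directly from the displayed inequality.

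The only delicate step is verifying that the hypothesis $n \geq (w(\Omega)^2 + 3/2)/(1-\epsilon)^2$ makes $\lambda_n - w(\Omega) - \sqrt{n}\,\epsilon$ positive, so that the tail bound is meaningful. The crude estimate $\lambda_n \geq n/\sqrt{n+1}$ used in part~1 is insufficient here; the sharper fact $\lambda_n^2 \geq n - \tfrac{1}{2}$ (a standard bound for the chi distribution, obtainable from Gautschi-type inequalities for ratios of Gamma functions applied to the explicit formula $\lambda_n = \sqrt{2}\,\Gamma(\tfrac{n+1}{2})/\Gamma(\tfrac{n}{2})$) is what is needed. Using $\sqrt{n(n-\tfrac{1}{2})} \leq n - \tfrac{1}{4}$, a short expansion of $(\lambda_n - \sqrt{n}\,\epsilon)^2$ reduces the desired inequality $\lambda_n - \sqrt{n}\,\epsilon \geq w(\Omega)$ to $n(1-\epsilon)^2 - (1-\epsilon)/2 \geq w(\Omega)^2$, which follows from the hypothesis together with $\epsilon \in [0,1]$. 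Once this algebraic balancing is in hand, the rest of the argument is a transparent translation between restricted singular values and the tangent-cone recovery criteria established in Section~\ref{subsec:reccond}.
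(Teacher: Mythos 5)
Your proof is correct and is essentially the paper's own argument: Gordon's bound (Theorem~\ref{theo:escape}) gives the expectation estimate, Gaussian concentration for the $1$-Lipschitz map $\tilde{\Phi} \mapsto \min_{\bz \in \Omega}\|\tilde{\Phi}\bz\|_2$ upgrades it to a tail bound, and Propositions~\ref{prop:null-intersection} and~\ref{prop:noisy-recovery} convert lower bounds on the restricted minimum gain into exact and robust recovery; the paper merely keeps your explicit rescaling $\tilde{\Phi}=\sqrt{n}\,\Phi$ implicit, which is exactly where its $\sqrt{n}\epsilon$ term comes from. Two small points. In part~1, ``take $u=0$'' is a slip: in your parametrization $\P\bigl(\sigma(\tilde{\Phi}) \leq \lambda_n - w(\Omega) - u\bigr) \leq \exp(-u^2/2)$, the choice $u=0$ is vacuous; the choice that produces the stated probability and the event $\sigma(\tilde{\Phi})>0$ is $u=\lambda_n - w(\Omega)$ (equivalently, gain threshold $\epsilon=0$ in your part-2 instantiation), and your surrounding logic makes clear this is what you intend. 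Second, your assertion that the bound $\lambda_n \geq n/\sqrt{n+1}$ is insufficient for part~2 is incorrect: the paper completes that verification with precisely this bound, via $w(\Omega)^2+1 \leq n(1-\epsilon)^2 - \tfrac{1}{2} \leq \bigl(\sqrt{n}(1-\epsilon)-\tfrac{\epsilon}{\sqrt{n}}\bigr)^2$ (using $\epsilon(1-\epsilon)\leq \tfrac{1}{4}$) followed by $\lambda_n - \sqrt{n}\epsilon \geq \tfrac{n-(n+1)\epsilon}{\sqrt{n+1}} \geq w(\Omega)$. Your alternative route through the Gautschi-type bound $\lambda_n^2 \geq n - \tfrac{1}{2}$ and $\sqrt{n(n-\tfrac{1}{2})}\leq n-\tfrac{1}{4}$, which reduces matters to $n(1-\epsilon)^2 - \tfrac{1-\epsilon}{2} \geq w(\Omega)^2$, is also valid and arguably cleaner algebra, at the cost of importing a Gamma-function inequality that the paper does not need (the paper only records $\lambda_n \geq n/\sqrt{n+1}$).
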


\begin{proof}
The two results are simple consequences of Theorem~\ref{theo:escape} and a concentration of measure argument.  Recall that for an function $f:\R^d \rightarrow \R$ with Lipschitz constant $L$ and a random Gaussian vector, $\bg\in\R^d$, with mean zero and identity variance
\begin{equation}\label{eq:conc-of-meas}
	\P\left[ f(\bg) \geq \E[f]-t \right] \geq 1 -  \exp\left(-\frac{t^2}{2L^2} \right)
\end{equation}
(see, for example,~\cite{LedT1991,Pis1986}). For any set $\Omega \subset \mathbb{S}^{p-1}$, the function
\[
\Phi \mapsto \min_{\bz\in \Omega} \|\Phi \bz\|_2
\]
is Lipschitz with respect to the Frobenius norm with constant $1$.   Thus,  applying~\ref{theo:escape} and (\ref{eq:conc-of-meas}), we find that
\begin{equation}\label{eq:main-conc-ineq}
	\P\left[ \min_{\bz\in \Omega} \|\Phi \bz\|_2 \geq \epsilon \right] \geq 1 -  \exp\left(-\frac{1}{2}(\lambda_n - w(\Omega) - \sqrt{n}\epsilon)^2\right)
\end{equation}
provided that $\lambda_n - w(\Omega) - \sqrt{n}\epsilon\geq 0$.

The first part now follows by setting $\epsilon = 0$ in (\ref{eq:main-conc-ineq}).  The concentration inequality is valid provided that $\lambda_n \geq w(\Omega)$.  To verify this, note
\[
\lambda_n \geq \frac{n}{\sqrt{n+1}} \geq \sqrt{\frac{w(\Omega)^2+1}{1+1/n} }
\geq \sqrt{\frac{w(\Omega)^2+w(\Omega)^2/n}{1+1/n} } = w(\Omega)\,.
\]
Here, both inequalities use the fact that $n\geq w(\Omega)^2+1$.

For the second part, we have from (\ref{eq:main-conc-ineq}) that
    \[
    \|\Phi(\bz)\| = \|\bz\| \left\|\Phi\left(\frac{\bz}{\|\bz\|}\right)\right\| \geq \epsilon \|\bz\|
    \]
for all $\bz\in \cT_\A(\bxs)$ with high probability if $\lambda_n \geq w(\Omega)+\sqrt{n}\epsilon$.  In this case, we can apply Proposition~\ref{prop:noisy-recovery} to conclude that $\|\hat{\bx}-\bxs\|\leq \frac{2\delta}{\epsilon}$.  To verify that concentration of measure can be applied is more or less the same as in the proof of Part 1. First, note that under the assumptions of the theorem
\[
	w(\Omega)^2 +1 \leq n(1-\epsilon)^2 -1/2 \leq n(1-\epsilon)^2 -2\epsilon(1-\epsilon) +\frac{\epsilon^2}{n} = \left(\sqrt{n}(1-\epsilon) - \frac{\epsilon}{\sqrt{n}} \right)^2
\]
as $\epsilon(1-\epsilon)\leq 1/4$ for $\epsilon\in (0,1)$.  Using this fact, we then have
  \[
    \lambda_n - \sqrt{n} \epsilon \geq \frac{n- (n+1)\epsilon}{\sqrt{n+1}} \geq \sqrt{\frac{w(\Omega)^2+1}{1+1/n}} \geq w(\Omega)
    \]
    as desired.
\end{proof}

Gordon's theorem thus provides a simple characterization of the number of measurements required for reconstruction with the atomic norm.  Indeed the Gaussian width of $\Omega = T_\A(\bxs) \cap \Sp^{p-1}$ is the only quantity that we need to compute in order to obtain bounds for both exact and robust recovery.  Unfortunately it is in general not easy to compute Gaussian widths.  Rudelson and Vershynin \cite{RudV2006} have worked out Gaussian widths for the special case of tangent cones at sparse vectors on the boundary of the $\ell_1$ ball, and derived results for sparse vector recovery using $\ell_1$ minimization that improve upon previous results.  In the next section we give various well-known properties of the Gaussian width that are useful in computations.  In Section~\ref{subsec:newwidthprop} we discuss a new approach to width computations that gives near-optimal recovery bounds in a variety of settings.

\subsection{Properties of Gaussian Width}
\label{subsec:widthprop}

The Gaussian width has deep connections to convex geometry.  Since the length and direction of a Gaussian random vector are independent, one can verify that for $S \subset \R^p$
\[
	w(S) = \frac{\lambda_p}{2} \int_{\Sp^{p-1}} \left(\max_{\bz\in S} \bu^T \bz -\min_{\bz\in S} \bu^T \bz\right)\,d\bu =  \frac{\lambda_p}{2} \, b(S)
\]
where the integral is with respect to Haar measure on $\Sp^{p-1}$ and $b(S)$ is known as the \emph{mean width} of $S$.   The mean width measures the average length of $S$ along unit directions in $\R^p$ and is one of the fundamental \emph{intrinsic volumes} of a body studied in combinatorial geometry~\cite{KlaR1997}.  Any continuous valuation that is invariant under rigid motions and homogeneous of degree 1 is a multiple of the mean width and hence a multiple of the Gaussian width.  We can use this connection with convex geometry to underscore several properties of the Gaussian width that are useful for computation.

The Gaussian width of a body is invariant under translations and unitary transformations.  Moreover, it is homogeneous in the sense that $w(tK)$ = $tw(K)$ for $t>0$. The width is also monotonic.  If $S_1 \subseteq S_2 \subseteq \R^p$, then it is clear from the definition of the Gaussian width that
\begin{equation*}
w(S_1) \leq w(S_2).
\end{equation*}
Less obvious, the width is modular in the sense that if $S_1$ and $S_2$ are convex bodies with $S_1\cup S_2$ convex, we also have
\[
	w(S_1 \cup S_2) + w(S_1 \cap S_2) = w(S_1)+w(S_2)\,.
\]
This equality follows from the fact that $w$ is a valuation~\cite{Bar2002}. Also note that if we have a set $S \subseteq \R^p$, then the Gaussian width of $S$ is equal to the Gaussian width of the convex hull of $S$:
\begin{equation*}
w(S) = w(\mathrm{conv}(S)).
\end{equation*}
This result follows from the basic fact in convex analysis that the maximum of a convex function over a convex set is achieved at an extreme point of the convex set.

If $V \subset \R^p$ is a subspace in $\R^p$, then we have that
\begin{equation*}
w(V \cap \Sp^{p-1}) = \sqrt{\mathrm{dim}(V)},
\end{equation*}
which follows from standard results on random Gaussians.  This result also agrees with the intuition that a random Gaussian map $\Phi$ misses a $k$-dimensional subspace with high probability as long as $\mathrm{dim}(\n(\Phi)) \geq k + 1$.  Finally, if a cone $S \subset \R^p$ is such that $S = S_1 \oplus S_2$, where $S_1 \subset \R^p$ is a $k$-dimensional cone, $S_2 \subset \R^p$ is a $(p-k)$-dimensional cone that is orthogonal to $S_1$, and $\oplus$ denotes the direct sum operation, then the width can be decomposed as follows:
\begin{equation*}
w(S \cap \Sp^{p-1})^2 \leq w(S_1 \cap \Sp^{p-1})^2 + w(S_2 \cap \Sp^{p-1})^2.
\end{equation*}
These observations are useful in a variety of situations.  For example a width computation that frequently arises is one in which $S = S_1 \oplus S_2$ as described above, with $S_1$ being a $k$-dimensional subspace.  It follows that the width of $S \cap \Sp^{p-1}$ is bounded as
\begin{equation}
w(S \cap \Sp^{p-1})^2 \leq k + w(S_2 \cap \Sp^{p-1})^2. \label{eq:width-sub-cone}
\end{equation}

Another tool for computing Gaussian widths is based on Dudley's inequality \cite{Dud1967,LedT1991}, which bounds the width of a set in terms of the covering number of the set at all scales.

\begin{definition}
Let $S$ be an arbitrary compact subset of $\mathbb{R}^p$.  The \emph{covering number} of $S$ in the Euclidean norm at resolution $\epsilon$ is the smallest number, $\mathfrak{N}(S,\epsilon)$, such that $\mathfrak{N}(S,\epsilon)$ Euclidean balls of radius $\epsilon$ cover $S$.
\end{definition}

\begin{theorem}[Dudley's Inequality]
\label{theo:dudley}

Let $S$ be an arbitrary compact subset of $\R^p$, and let $\bg$ be a random vector with i.i.d. zero-mean, unit-variance Gaussian entries.  Then
\begin{equation}
        w(S) \leq 24 \int_0^\infty \sqrt{ \log(\mathfrak{N}(S,\epsilon))} d\epsilon.
\end{equation}
\end{theorem}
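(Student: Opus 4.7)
The plan is to prove Dudley's inequality by the classical \emph{chaining} technique applied to the centered Gaussian process $X_\bz := \bg^T \bz$ indexed by $\bz \in S$. This process has natural (pseudo)metric $d(\bz,\bz') := \sqrt{\E(X_\bz - X_{\bz'})^2} = \|\bz-\bz'\|$, so Euclidean covering numbers of $S$ directly control the increments of $X$. By definition $w(S) = \E \sup_{\bz \in S} X_\bz$. A routine reduction (monotone approximation along an increasing sequence of finite subsets of $S$) lets one assume that $S$ is finite, sidestepping any measurability issues when taking suprema.

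For each integer $k$ choose a minimum-cardinality $2^{-k}$-net $T_k \subset \R^p$ of $S$, so $|T_k| = \mathfrak{N}(S, 2^{-k})$, and let $\pi_k : S \to T_k$ be the nearest-point map, which satisfies $\|\bz - \pi_k(\bz)\| \leq 2^{-k}$. Choose $k_0 \in \Z$ large enough in the negative direction that $\mathrm{diam}(S) \leq 2^{-k_0+1}$, so that we may take $T_{k_0-1} = \{\bz_0\}$ for some fixed base point $\bz_0$. Since $\pi_k(\bz) \to \bz$ as $k \to \infty$, we obtain the telescoping identity
\begin{equation*}
X_\bz - X_{\bz_0} = \sum_{k \geq k_0} \bigl(X_{\pi_k(\bz)} - X_{\pi_{k-1}(\bz)}\bigr)
\end{equation*}
for each $\bz \in S$. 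The key point is that the $k$th increment is a Gaussian of standard deviation $\|\pi_k(\bz) - \pi_{k-1}(\bz)\| \leq 2^{-k} + 2^{-k+1} = 3\cdot 2^{-k}$, and as $\bz$ varies over $S$ the pair $(\pi_k(\bz), \pi_{k-1}(\bz))$ takes at most $|T_k|\cdot|T_{k-1}| \leq \mathfrak{N}(S, 2^{-k})^2$ distinct values.

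Applying the standard Gaussian maximal bound $\E \max_{1 \leq i \leq N} Y_i \leq \sigma\sqrt{2 \log N}$ for centered Gaussians $Y_i$ of standard deviations at most $\sigma$ separately to the $k$th level of the chain, I take a supremum over $\bz \in S$ inside the telescoping sum to obtain
\begin{equation*}
w(S) = \E \sup_{\bz \in S}(X_\bz - X_{\bz_0}) \leq \sum_{k \geq k_0} 3\cdot 2^{-k} \sqrt{2 \log \mathfrak{N}(S, 2^{-k})^2} \leq \sum_{k \geq k_0} 6\cdot 2^{-k}\sqrt{\log \mathfrak{N}(S, 2^{-k})}.
\end{equation*}
Since $\mathfrak{N}(S,\cdot)$ is nonincreasing, each term is bounded by an integral over the dyadic interval $[2^{-k-1}, 2^{-k}]$: namely $2^{-k}\sqrt{\log \mathfrak{N}(S,2^{-k})} \leq 2 \int_{2^{-k-1}}^{2^{-k}} \sqrt{\log \mathfrak{N}(S,\epsilon)}\, d\epsilon$. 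Summing the geometric telescope collapses to the integral over $(0,\infty)$ and yields $w(S) \leq 24 \int_0^\infty \sqrt{\log \mathfrak{N}(S,\epsilon)}\, d\epsilon$.

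The only real obstacle is bookkeeping: justifying the exchange of $\sup_{\bz}$ with the infinite sum over $k$ and tracking constants cleanly. The finite-$S$ reduction makes all these manipulations rigorous, and a slightly sharper accounting in fact produces a constant around $12$; the stated $24$ absorbs the looseness without requiring a tight optimization. The only genuinely probabilistic ingredient is the Gaussian maximal inequality, which itself is immediate from a union bound on the Gaussian tail.
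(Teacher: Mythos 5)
Your proof is correct. There is, however, no in-paper argument to compare it against: the paper imports Dudley's inequality as a known result, citing Dudley (1967) and Ledoux--Talagrand (1991), and gives no proof of Theorem~\ref{theo:dudley}. Your dyadic chaining argument is precisely the canonical proof found in those references: telescope along nearest-point maps to $2^{-k}$-nets, apply the Gaussian maximal inequality $\E \max_{1\le i\le N} Y_i \le \sigma\sqrt{2\log N}$ scale by scale, and compare the resulting series to the entropy integral over dyadic blocks. The bookkeeping checks out: the level-$k$ increments have standard deviation at most $3\cdot 2^{-k}$ by the triangle inequality, the realized pairs at level $k$ number at most $\mathfrak{N}(S,2^{-k})\,\mathfrak{N}(S,2^{-k+1})\le \mathfrak{N}(S,2^{-k})^2$ by monotonicity of covering numbers, and the comparison $2^{-k}\sqrt{\log \mathfrak{N}(S,2^{-k})}\le 2\int_{2^{-k-1}}^{2^{-k}}\sqrt{\log \mathfrak{N}(S,\epsilon)}\,d\epsilon$ sums to yield the constant $12$, which implies the stated bound with $24$ since the integrand is nonnegative. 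The reduction to finite $S$ and the domination $\sup_{\bz}|X_{\pi_k(\bz)}-X_{\pi_{k-1}(\bz)}|\le 3\cdot 2^{-k}\|\bg\|$ justify the exchanges of supremum, sum, and expectation, so no genuine gap remains.
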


We note here that a weak converse to Dudley's inequality can be obtained via Sudakov's Minoration \cite{LedT1991} by using the covering number for just a single scale.  Specifically, we have the following \emph{lower bound} on the Gaussian width of a compact subset $S \subset \R^p$ for any $\epsilon > 0$:
\begin{equation*}
w(S) \geq c \epsilon \sqrt{ \log(\mathfrak{N}(S,\epsilon))}.
\end{equation*}
Here $c > 0$ is some universal constant.

Although Dudley's inequality can be applied quite generally, estimating covering numbers is difficult in most instances.  There are a few simple characterizations available for spheres and Sobolev spaces, and some tractable arguments based on Maurey's empirical method \cite{LedT1991}.  However it is not evident how to compute these numbers for general convex cones.  Also, in order to apply Dudley's inequality we need to estimate the covering number at all scales.  Further Dudley's inequality can be quite loose in its estimates, and it often introduces extraneous polylogarithmic factors.  In the next section we describe a new mechanism for estimating Gaussian widths, which provides near-optimal guarantees for recovery of sparse vectors and low-rank matrices, as well as for several of the recovery problems discussed in Section~\ref{subsec:newrec}.

\subsection{New Results on Gaussian Width}
\label{subsec:newwidthprop}

We now present a framework for computing Gaussian widths by bounding the Gaussian width of a cone via the distance to the dual cone.  To be fully general let $\mathcal{C}$ be a non-empty convex cone in $\R^p$, and let $\mathcal{C}^\ast$ denote the polar of $\mathcal{C}$.  We can then upper bound the Gaussian width of any cone $\mathcal{C}$ in terms of the polar cone $\mathcal{C}^\ast$:

\begin{proposition} \label{prop:dual-width}
Let $\mathcal{C}$ be any non-empty convex cone in $\R^p$, and let $\bg \sim \mathcal{N}(0,I)$ be a random Gaussian vector.  Then we have the following bound:
\begin{equation*}
w(\mathcal{C} \cap \Sp^{p-1}) \leq \E_\bg \left[\mathrm{dist}(\bg, \mathcal{C}^\ast) \right],
\end{equation*}
where $\mathrm{dist}$ here denotes the Euclidean distance between a point and a set.
\end{proposition}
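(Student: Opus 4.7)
The plan is to establish the inequality pointwise: for every realization of $\bg$, show that
\[
\sup_{\bz \in \mathcal{C} \cap \Sp^{p-1}} \bg^T \bz \;\leq\; \mathrm{dist}(\bg, \mathcal{C}^\ast),
\]
and then take expectations on both sides, which immediately yields the bound on $w(\mathcal{C} \cap \Sp^{p-1})$ from the definition of Gaussian width.

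To prove the pointwise inequality, I would fix an arbitrary $\bz \in \mathcal{C} \cap \Sp^{p-1}$ and an arbitrary $\bs \in \mathcal{C}^\ast$, and write the decomposition $\bg^T \bz = (\bg - \bs)^T \bz + \bs^T \bz$. The second term is nonpositive because $\bs \in \mathcal{C}^\ast$ and $\bz \in \mathcal{C}$, so by the definition of the polar cone $\langle \bs, \bz \rangle \leq 0$. For the first term I would apply Cauchy--Schwarz together with $\|\bz\| = 1$ to get $(\bg - \bs)^T \bz \leq \|\bg - \bs\|$. Combining these two observations yields $\bg^T \bz \leq \|\bg - \bs\|$ for every $\bs \in \mathcal{C}^\ast$, and taking the infimum over $\bs$ gives $\bg^T \bz \leq \mathrm{dist}(\bg, \mathcal{C}^\ast)$. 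Since this bound does not depend on $\bz$, I can take the supremum on the left over $\bz \in \mathcal{C} \cap \Sp^{p-1}$, establishing the desired pointwise inequality.

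Taking expectation with respect to $\bg \sim \mathcal{N}(0, I)$ on both sides and recalling the definition $w(S) = \E_\bg[\sup_{\bz \in S} \bg^T \bz]$ completes the argument. There is essentially no analytical obstacle here: everything reduces to the polar cone's defining inequality plus Cauchy--Schwarz, and no regularity assumption beyond non-emptiness of $\mathcal{C}$ is needed (the distance function to $\mathcal{C}^\ast$ is well-defined regardless of closedness, since taking the closure of $\mathcal{C}^\ast$ does not change distances). An alternative route would be to use Moreau's orthogonal decomposition $\bg = \Pi_{\mathcal{C}^\ast}(\bg) + \Pi_{\overline{\mathcal{C}}}(\bg)$ to identify the optimal $\bs = \Pi_{\mathcal{C}^\ast}(\bg)$ explicitly and observe that the bound is in fact tight along the projection direction, but the infimum-based argument above is more elementary and suffices for the stated inequality.
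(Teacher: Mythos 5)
Your proof is correct, but it takes a genuinely different route from the paper's. The paper bounds $w(\mathcal{C} \cap \Sp^{p-1})$ by the supremum over $\mathcal{C}$ intersected with the unit ball, writes that supremum as the optimal value of a constrained maximization in $\bz$, forms the Lagrangian with a multiplier $\bu \in \mathcal{C}^\ast$ and $\gamma \geq 0$, and identifies the dual problem as $\min_{\bu \in \mathcal{C}^\ast} \|\bg - \bu\|$; its conclusion then rests on \emph{strong} duality, which it justifies only ``under very mild assumptions'' about $\mathcal{C}$ (e.g., nonempty relative interior). Your argument is precisely the weak-duality half of this computation, carried out by hand: the decomposition $\bg^T \bz = (\bg-\bu)^T\bz + \bu^T\bz$, the polar-cone inequality $\langle \bu, \bz \rangle \leq 0$, and Cauchy--Schwarz together certify that every dual-feasible point upper-bounds the primal value. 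Since the proposition asserts only an inequality, weak duality suffices, so your proof is actually cleaner than the paper's: it is pointwise in $\bg$, requires no constraint qualification and no closedness, and works for any non-empty cone (note that $\mathcal{C}^\ast$ is automatically closed and contains the origin, so the distance is well defined and the infimum runs over a nonempty set — your aside about taking closures is unnecessary). What the paper's longer route buys is the stronger conclusion that the ball-constrained expectation \emph{equals} $\E_\bg[\mathrm{dist}(\bg,\mathcal{C}^\ast)]$, i.e., the only slack in the proposition is the replacement of the sphere by the ball; this underlies the paper's remark that the proposition is ``more or less a restatement'' of the fact that the support function of a cone is the distance to its polar. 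Your closing observation via Moreau's decomposition recovers exactly that tightness, so in effect you have both halves, with the stated inequality established by more elementary means.
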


The proof is given in Appendix~\ref{app:dual-width}, and it follows from an appeal to convex duality.  Proposition~\ref{prop:dual-width} is more or less a restatement of the fact that the support function of a convex cone is equal to the distance to its polar cone.  As it is the square of the Gaussian width that is of interest to us (see Corollary~\ref{corl:width}), it is often useful to apply Jensen's inequality to make the following approximation:
\begin{equation}\label{eq:square-jensen-bound}
\E_\bg[\mathrm{dist}(\bg,\mathcal{C}^\ast)]^2 \leq \E_\bg[\mathrm{dist}(\bg,\mathcal{C}^\ast)^2].
\end{equation}


The inspiration for our characterization in Proposition~\ref{prop:dual-width} of the width of a cone in terms of the expected distance to its dual came from the work of Stojnic \cite{Sto2009}, who used linear programming duality to construct Gaussian-width-based estimates for analyzing recovery in sparse reconstruction problems.  Specifically, Stojnic's relatively simple approach recovered well-known phase transitions in sparse signal recovery \cite{DonT2005}, and also generalized to block sparse signals and other forms of structured sparsity.

This new dual characterization yields a number of useful bounds on the Gaussian width, which we describe here.  In the following section we use these bounds to derive new recovery results.  The first result is a bound on the Gaussian width of a cone in terms of the Gaussian width of its polar.

\begin{lemma} \label{lemm:dual-width}
Let $\mathcal{C} \subseteq \R^p$ be a non-empty closed, convex cone.  Then we have that
\begin{equation*}
w(\mathcal{C} \cap \Sp^{p-1})^2 + w(\mathcal{C}^\ast \cap \Sp^{p-1})^2 \leq p.
\end{equation*}
\end{lemma}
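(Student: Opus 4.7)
The plan is to combine the dual-width bound of Proposition~\ref{prop:dual-width} with Moreau's decomposition theorem for closed convex cones and their polars. Specifically, since $\mathcal{C}$ is a non-empty closed convex cone, I will apply Proposition~\ref{prop:dual-width} to both $\mathcal{C}$ and $\mathcal{C}^\ast$ (using that $(\mathcal{C}^\ast)^\ast = \mathcal{C}$ by the bipolar theorem for closed convex cones), and then invoke Jensen's inequality as in~\eqref{eq:square-jensen-bound} to obtain
\begin{equation*}
w(\mathcal{C}\cap \Sp^{p-1})^2 \leq \E_\bg[\mathrm{dist}(\bg,\mathcal{C}^\ast)^2], \qquad w(\mathcal{C}^\ast\cap \Sp^{p-1})^2 \leq \E_\bg[\mathrm{dist}(\bg,\mathcal{C})^2].
\end{equation*}

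The heart of the argument is then the Moreau decomposition: because $\mathcal{C}$ is a closed convex cone with polar $\mathcal{C}^\ast$, every $\bg \in \R^p$ decomposes uniquely as $\bg = \Pi_{\mathcal{C}}(\bg) + \Pi_{\mathcal{C}^\ast}(\bg)$ with $\langle \Pi_{\mathcal{C}}(\bg), \Pi_{\mathcal{C}^\ast}(\bg)\rangle = 0$. Since projection onto a closed convex set realizes the minimum distance, this yields $\mathrm{dist}(\bg,\mathcal{C}) = \|\Pi_{\mathcal{C}^\ast}(\bg)\|$ and $\mathrm{dist}(\bg,\mathcal{C}^\ast) = \|\Pi_{\mathcal{C}}(\bg)\|$. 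Orthogonality of the two components then gives the Pythagorean identity
\begin{equation*}
\mathrm{dist}(\bg,\mathcal{C})^2 + \mathrm{dist}(\bg,\mathcal{C}^\ast)^2 = \|\Pi_{\mathcal{C}^\ast}(\bg)\|^2 + \|\Pi_{\mathcal{C}}(\bg)\|^2 = \|\bg\|^2.
\end{equation*}

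Finally, taking expectations of both sides with $\bg \sim \mathcal{N}(0,I)$ gives $\E\|\bg\|^2 = p$, and chaining the three inequalities produces
\begin{equation*}
w(\mathcal{C}\cap \Sp^{p-1})^2 + w(\mathcal{C}^\ast\cap \Sp^{p-1})^2 \leq \E_\bg\bigl[\mathrm{dist}(\bg,\mathcal{C}^\ast)^2 + \mathrm{dist}(\bg,\mathcal{C})^2\bigr] = p,
\end{equation*}
as desired. There is no serious obstacle: the main ingredients (Proposition~\ref{prop:dual-width}, Jensen, and Moreau's decomposition) are each already at hand or standard, and the only thing to be careful about is the sign convention on the polar cone, which matters for ensuring that the projections $\Pi_{\mathcal{C}}$ and $\Pi_{\mathcal{C}^\ast}$ are genuinely orthogonal rather than merely non-positively correlated. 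Once that is observed, the Pythagorean identity is immediate and the rest of the proof is a one-line computation.
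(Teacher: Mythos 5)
Your proof is correct and follows essentially the same route as the paper: both arguments combine Proposition~\ref{prop:dual-width} with Jensen's inequality and then invoke the Moreau decomposition $\bg = \Pi_{\mathcal{C}}(\bg) + \Pi_{\mathcal{C}^\ast}(\bg)$ to get the Pythagorean identity $\mathrm{dist}(\bg,\mathcal{C})^2 + \mathrm{dist}(\bg,\mathcal{C}^\ast)^2 = \|\bg\|^2$, whose expectation is $p$. The only difference is presentational---you sum two symmetric inequalities, while the paper chains the same facts into a single sequence of bounds.
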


\begin{proof}
Combining Proposition~\ref{prop:dual-width} and \eqref{eq:square-jensen-bound}, we have that
\begin{equation*}
w(\mathcal{C} \cap \Sp^{p-1})^2 \leq \E_\bg\left[\mathrm{dist}(\bg,\mathcal{C}^\ast)^2 \right],
\end{equation*}
where as before $\bg \sim \mathcal{N}(0,I)$.  For any $\bz \in \R^p$ we let $\Pi_\mathcal{C}(\bz) = \arg \inf_{\bu \in \mathcal{C}} \|\bz - \bu \|$ denote the projection of $\bz$ onto $\mathcal{C}$.  From standard results in convex analysis \cite{Roc1996}, we note that one can decompose any $\bz \in \R^p$ into orthogonal components as follows:
\begin{equation*}
\bz = \Pi_\mathcal{C}(\bz) + \Pi_{\mathcal{C}^\ast}(\bz), ~~~ \langle \Pi_\mathcal{C}(\bz), \Pi_{\mathcal{C}^\ast}(\bz) \rangle = 0.
\end{equation*}
Therefore we have the following sequence of bounds:
\begin{eqnarray*}
w(\mathcal{C} \cap \Sp^{p-1})^2 &\leq& \E_\bg\left[\mathrm{dist}(\bg,\mathcal{C}^\ast)^2 \right] \\ &=& \E_\bg\left[ \|\Pi_{\mathcal{C}}(\bg)\|^2\right] \\ &=& \E_\bg\left[ \|\bg\|^2 - \|\Pi_{\mathcal{C}^\ast}(\bg)\|^2\right] \\ &=& p - \E_\bg \left[\|\Pi_{\mathcal{C}^\ast}(\bg)\|^2 \right] \\ &=& p - \E_\bg \left[ \mathrm{dist}(\bg, \mathcal{C})^2 \right] \\ &\leq& p - w(\mathcal{C}^\ast \cap \Sp^{p-1})^2.
\end{eqnarray*}
\end{proof}

In many recovery problems one is interested in computing the width of a self-dual cone.  For such cones the following corollary to Lemma~\ref{lemm:dual-width} gives a simple solution:

\begin{corollary} \label{corl:selfdual}
Let $\mathcal{C} \subset \R^p$ be a self-dual cone, i.e., $\mathcal{C} = -\mathcal{C}^\ast$.  Then we have that
\begin{equation*}
w(\mathcal{C} \cap \Sp^{p-1})^2 \leq \frac{p}{2}.
\end{equation*}
\end{corollary}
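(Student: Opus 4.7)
The plan is to deduce the corollary almost immediately from Lemma~\ref{lemm:dual-width}, which already gives the bound $w(\mathcal{C}\cap \Sp^{p-1})^2 + w(\mathcal{C}^\ast\cap \Sp^{p-1})^2 \leq p$ for any closed convex cone. So the only thing I need to do is argue that, under the self-duality hypothesis $\mathcal{C} = -\mathcal{C}^\ast$, the two widths appearing in the lemma are equal, whence each is at most $p/2$.

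First I would establish the auxiliary symmetry property $w(-S\cap \Sp^{p-1}) = w(S\cap \Sp^{p-1})$ for any set $S\subseteq \R^p$. This is a one-line observation: by definition
\begin{equation*}
w(-S\cap \Sp^{p-1}) = \E_\bg\Bigl[\sup_{\bz\in S\cap \Sp^{p-1}} \bg^T(-\bz)\Bigr] = \E_\bg\Bigl[\sup_{\bz\in S\cap \Sp^{p-1}} (-\bg)^T \bz\Bigr] = w(S\cap \Sp^{p-1}),
\end{equation*}
where the last equality uses the fact that $-\bg$ has the same $\mathcal{N}(0,I)$ distribution as $\bg$.

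Next I would apply this with $S = \mathcal{C}^\ast$ together with the hypothesis $\mathcal{C} = -\mathcal{C}^\ast$ to conclude $w(\mathcal{C}\cap \Sp^{p-1}) = w(-\mathcal{C}^\ast \cap \Sp^{p-1}) = w(\mathcal{C}^\ast \cap \Sp^{p-1})$. Plugging this equality into the bound from Lemma~\ref{lemm:dual-width} gives $2\,w(\mathcal{C}\cap \Sp^{p-1})^2 \leq p$, which is the desired inequality. I would also implicitly note that $\mathcal{C}$ is automatically closed and convex (being the negative of a polar cone), so Lemma~\ref{lemm:dual-width} applies without extra assumptions.

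There is essentially no obstacle here; the entire content is the sign-flip invariance of Gaussian width combined with the already-proved lemma. The only minor subtlety is being careful about the definitional convention for the polar cone (the paper uses $\mathcal{C}^\ast = \{x : \langle x,z\rangle \le 0 \,\forall z\in \mathcal{C}\}$, which is why the self-duality condition carries the minus sign as $\mathcal{C} = -\mathcal{C}^\ast$ rather than $\mathcal{C}=\mathcal{C}^\ast$), and this is exactly what makes the sign-flip symmetry of $w$ the right tool to invoke.
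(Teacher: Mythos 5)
Your proof is correct and is essentially the paper's own argument: the paper likewise deduces the corollary directly from Lemma~\ref{lemm:dual-width} by asserting the equality $w(\mathcal{C} \cap \Sp^{p-1})^2 = w(\mathcal{C}^\ast \cap \Sp^{p-1})^2$. The only difference is that you spell out why that equality holds (sign-flip invariance of the Gaussian width applied to $\mathcal{C} = -\mathcal{C}^\ast$), a detail the paper leaves implicit.
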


\begin{proof}
The proof follows directly from Lemma~\ref{lemm:dual-width} as $w(\mathcal{C} \cap \Sp^{p-1})^2 = w(\mathcal{C}^\ast \cap \Sp^{p-1})^2$.
\end{proof}

Our next bound for the width of a cone $\mathcal{C}$ is based on the volume of its polar $\mathcal{C}^\ast \cap \Sp^{p-1}$.  The \emph{volume} of a measurable subset of the sphere is the fraction of the sphere $\Sp^{p-1}$ covered by the subset.  Thus it is a quantity between zero and one.

\begin{theorem} [Gaussian width from volume of the polar]
\label{theo:angle}

Let $\mathcal{C} \subseteq \R^p$ be any closed, convex, solid cone, and suppose that its polar $\mathcal{C}^\ast$ is such that $\mathcal{C}^\ast \cap \Sp^{p-1}$ has a volume of $\Theta \in [0, 1]$.  Then for $p \geq 9$ we have that
\begin{equation*}
w(\mathcal{C} \cap \Sp^{p-1}) \leq 3 \sqrt{\log\left(\frac{4}{\Theta}\right)}.
\end{equation*}
\end{theorem}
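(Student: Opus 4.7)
The natural starting point is Proposition~\ref{prop:dual-width}, which reduces the task to bounding $\E_{\bg}[\mathrm{dist}(\bg, \mathcal{C}^\ast)]$ for $\bg \sim \cN(0, I)$ in $\R^p$ by $3\sqrt{\log(4/\Theta)}$. Two structural observations drive the proof. First, the function $f(\bg) := \mathrm{dist}(\bg, \mathcal{C}^\ast)$ is $1$-Lipschitz in $\bg$, since $\mathcal{C}^\ast$ is closed and convex. Second, its zero set is exactly $\mathcal{C}^\ast$, and the rotational invariance of the standard Gaussian identifies the Gaussian measure of the cone $\mathcal{C}^\ast$ with the spherical volume $\Theta$ of $\mathcal{C}^\ast \cap \Sp^{p-1}$; hence $\P_{\bg}(f(\bg) = 0) = \Theta$.

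Given a $1$-Lipschitz function whose zero set has Gaussian measure $\Theta$, Borell's Gaussian isoperimetric inequality yields the sharp tail bound
\[
\P(f > t) \;\leq\; 1 - \Phi\bigl(\Phi^{-1}(\Theta) + t\bigr) \qquad \text{for every } t \geq 0,
\]
where $\Phi$ is the standard normal CDF. Setting $a := -\Phi^{-1}(\Theta)$ and integrating via $\E[f] = \int_0^\infty \P(f > t)\,dt$, a change of variable together with one integration by parts yields the closed form $\E[f] \leq \phi(a) + a\,\Phi(a)$ in the regime $a \geq 0$ (i.e., $\Theta \leq 1/2$), and the smaller bound $\E[f] \leq 1/\sqrt{2\pi}$ when $a \leq 0$. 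The standard Gaussian tail bound $\Phi(-a) \leq \tfrac{1}{2}e^{-a^2/2}$ then gives $a \leq \sqrt{2\log(1/(2\Theta))}$, so altogether $\E[f] \leq \sqrt{2\log(1/(2\Theta))} + 1/\sqrt{2\pi}$ in the nontrivial regime.

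What remains is a routine algebraic check that this quantity is at most $3\sqrt{\log(4/\Theta)}$ for every $\Theta \in (0,1]$. Asymptotically as $\Theta \to 0$, the leading comparison is $\sqrt{2}\sqrt{\log(1/\Theta)}$ on the left versus $3\sqrt{\log(1/\Theta)}$ on the right, so the right side exceeds the left by a factor of $3/\sqrt{2} > 2$ that easily absorbs the additive $1/\sqrt{2\pi}$ term; at $\Theta = 1/2$ the left side is below $0.4$ while the right side exceeds $4$, and the regime $\Theta \geq 1/2$ is even more immediate. The main technical point is simply bookkeeping across both regimes. My route does not require $p \geq 9$ at any stage; I suspect that hypothesis reflects a coarser proof path---for instance, bounding $\E[\mathrm{dist}(\bg, \mathcal{C}^\ast)]$ by $\lambda_p \cdot \E_{\bu}[\sin\theta^\ast(\bu)]$ and then invoking L\'evy's spherical concentration---in which the ambient dimension must be large enough to tame lower-order $1/\sqrt{p}$ factors.
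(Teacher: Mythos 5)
Your proof is correct, and it takes a genuinely different route from the paper's. Both arguments start from Proposition~\ref{prop:dual-width}, but the paper then passes to the sphere: it factors $\E_\bg[\mathrm{dist}(\bg,\mathcal{C}^\ast)] \leq \sqrt{p}\,\E_{\bu}[\mathrm{dist}(\bu,\mathcal{C}^\ast\cap\Sp^{p-1})]$ with $\bu$ uniform on $\Sp^{p-1}$, invokes \emph{spherical} isoperimetry to replace $\mathcal{C}^\ast\cap\Sp^{p-1}$ by a spherical cap of equal volume, and then works through explicit $\sin^{p-1}\vartheta$ integrals together with the cap-volume estimates of \cite{Bri1998} to lower-bound the cap angle. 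You instead stay in Gauss space: since $\mathcal{C}^\ast$ is a cone, the polar decomposition of the standard Gaussian gives $\P_\bg(\bg\in\mathcal{C}^\ast)=\Theta$, so the Borell--Sudakov--Tsirelson Gaussian isoperimetric inequality applies directly to the neighborhoods of $\mathcal{C}^\ast$, and the exact integral $\int_0^\infty\bigl(1-\Phi(\Phi^{-1}(\Theta)+t)\bigr)\,dt=\phi(a)+a\Phi(a)$ with $a=-\Phi^{-1}(\Theta)$, combined with $\Phi(-a)\leq\tfrac12 e^{-a^2/2}$, finishes the job; your closing comparison is sound, since for $x=\log(1/\Theta)\geq\log 2$ one has $3\sqrt{x+\log 4}-\sqrt{2x}\geq(3-\sqrt{2})\sqrt{\log 2}>1.3>1/\sqrt{2\pi}$, and the regime $\Theta\geq 1/2$ is trivial. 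What each approach buys: yours eliminates the sphere entirely, so all dimension-dependent bookkeeping disappears---your diagnosis of the $p\geq 9$ hypothesis is exactly right, as the paper uses it to ensure the cap height $h=\sqrt{2\log(4\beta)/(p-1)}$ satisfies $h\leq 1$ and $h\geq 2/\sqrt{p}$ (so that the lemma of \cite{Bri1998} applies) and in its final simplification---and your bound is dimension-free with an asymptotically sharper constant, roughly $\sqrt{2\log(1/\Theta)}$ versus $3\sqrt{\log(4/\Theta)}$. The paper's route is computationally heavier but relies only on spherical caps and elementary integration; interestingly, the paper's own prose describes its key ingredient as ``Gaussian isoperimetry,'' so the two proofs are conceptually cousins, with yours being the clean Gauss-space implementation. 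One small presentational point: the $1$-Lipschitz property of $f=\mathrm{dist}(\cdot,\mathcal{C}^\ast)$ is not actually what drives your argument (generic Lipschitz concentration around the median or mean would not give the tail you need); what matters is that the sublevel sets $\{f\leq t\}$ are precisely the $t$-neighborhoods of $\mathcal{C}^\ast$, which is exactly the object Borell's inequality controls.
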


The proof of this theorem is given in Appendix~\ref{app:width-angle}.  The main property that we appeal to in the proof is \emph{Gaussian isoperimetry}.  In particular there is a formal sense in which a spherical cap\footnote{A \emph{spherical cap} is a subset of the sphere obtained by intersecting the sphere $\Sp^{p-1}$ with a halfspace.} is the ``extremal case'' among all subsets of the sphere with a given volume $\Theta$.  Other than this observation the proof mainly involves a sequence of integral calculations.

Note that if we are given a specification of a cone $\mathcal{C} \subset \R^p$ in terms of a membership oracle, it is possible to efficiently obtain good numerical estimates of the volume of $\mathcal{C} \cap \Sp^{p-1}$ \cite{DyeFK1991}.  Moreover, simple symmetry arguments often give relatively accurate estimates of these volumes. Such estimates can then be plugged into Theorem~\ref{theo:angle} to yield bounds on the width.

\subsection{New Recovery Bounds}
\label{subsec:newrec}

We use the bounds derived in the last section to obtain new recovery results.  First using the dual characterization of the Gaussian width in Proposition~\ref{prop:dual-width}, we are able to obtain sharp bounds on the number of measurements required for recovering sparse vectors and low-rank matrices from random Gaussian measurements using convex optimization (i.e., $\ell_1$-norm and nuclear norm minimization).

\begin{proposition}\label{prop:l1}
Let $\bxs \in \R^p$ be an $s$-sparse vector.  Letting $\A$ denote the set of unit-Euclidean-norm one-sparse vectors, we have that
\begin{equation*}
w(T_{\A}(\bxs) \cap \Sp^{p-1})^2 \leq 2s \log\left(\tfrac{p}{s}\right) + \tfrac{5}{4}s\,.
\end{equation*}
Thus, $2s \log\left(\tfrac{p}{s}\right) + \tfrac{5}{4}s+1$ random Gaussian measurements suffice to recover $\bxs$ via $\ell_1$ norm minimization with high probability.
\end{proposition}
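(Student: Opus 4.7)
The plan is to apply the dual characterization of Gaussian width from Proposition~\ref{prop:dual-width}, which reduces the problem to estimating $\E_\bg[\mathrm{dist}(\bg, T_\A(\bxs)^\ast)^2]$ via \eqref{eq:square-jensen-bound}, and then to bound this expected squared distance by picking a single clever point in the polar cone.

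First, I would identify the polar cone explicitly. Since the atomic norm in this setting coincides with $\|\cdot\|_1$, the subdifferential of $\|\cdot\|_\A$ at $\bxs$ consists of vectors $\bz$ satisfying $\bz_S = \mathrm{sign}(\bxs_S)$ on the support $S = \mathrm{supp}(\bxs)$ together with $\|\bz_{S^c}\|_\infty \leq 1$. The normal cone is its conic hull, so
\[
T_\A(\bxs)^\ast = \bigl\{\bz \in \R^p : \bz_S = t\,\mathrm{sign}(\bxs_S),\ \|\bz_{S^c}\|_\infty \leq t,\ t \geq 0\bigr\}.
\]

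Next, for any fixed $t \geq 0$ the corresponding element-wise Euclidean projection bounds the squared distance pointwise by $\|\bg_S - t\,\mathrm{sign}(\bxs_S)\|^2 + \sum_{i \in S^c}(|g_i|-t)_+^2$, so that
\[
\E_\bg[\mathrm{dist}(\bg, T_\A(\bxs)^\ast)^2] \leq s(1+t^2) + (p-s)\,\E_{g\sim\cN(0,1)}[(|g|-t)_+^2],
\]
where the first summand follows from $\E\|\bg_S\|^2 = s$ and $\|t\,\mathrm{sign}(\bxs_S)\|^2 = st^2$.

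I would then set $t = \sqrt{2\log(p/s)}$, which makes $\phi(t) = (s/p)/\sqrt{2\pi}$, so the factor $(p-s)\phi(t)$ telescopes to something of order $s$. A direct computation (using integration by parts, which yields $\E[(|g|-t)_+^2] = 2[(1+t^2)\Phi(-t) - t\phi(t)]$, combined with the Mill's ratio bound $\Phi(-t)/\phi(t) \leq 1/t - 1/t^3 + 3/t^5$) controls the Gaussian tail term by $(p-s)\E[(|g|-t)_+^2] \leq s/4$ in the relevant regime. Adding gives $s(1+t^2) + s/4 = 2s\log(p/s) + \tfrac{5}{4}s$, as claimed. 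The measurement count follows from Corollary~\ref{corl:width}(1).

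The main obstacle will be pinning down the constant $5/4$ rather than losing logarithmic factors: the naive Mill's-ratio bound $\Phi(-t) \leq \phi(t)/t$ only yields $\E[(|g|-t)_+^2] \leq 2\phi(t)/t$, which gives an extra $1/\sqrt{\log(p/s)}$ factor instead of the desired $1/\log^{3/2}(p/s)$. Using the sharper two-term Mill's expansion is essential. A secondary subtlety is handling very small $p/s$, where the choice $t = \sqrt{2\log(p/s)}$ degenerates; there one can either adjust $t$ or fall back on the trivial bound $w^2 \leq p$, both of which comfortably fit under $2s\log(p/s) + \tfrac{5}{4}s$.
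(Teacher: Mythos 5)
You retrace the paper's proof almost exactly: the same reduction through Proposition~\ref{prop:dual-width} and~\eqref{eq:square-jensen-bound}, the same explicit description of the normal cone $N_\A(\bxs)$, the same coordinate-wise bound $s(1+t^2)+(p-s)\E\left[(|g|-t)_+^2\right]$ for a fixed threshold $t$, the same choice $t=\sqrt{2\log(p/s)}$, and the same integration-by-parts identity $\E\left[(|g|-t)_+^2\right]=2\left[(1+t^2)Q(t)-t\phi(t)\right]$, where $Q$ denotes the Gaussian tail and $\phi$ the Gaussian density. The single point of divergence is the last inequality: the paper does \emph{not} use the sharper Mill's-ratio bound you call essential. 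It uses only $Q(t)\leq\phi(t)/t$, exploits the cancellation $(1+t^2)\phi(t)/t-t\phi(t)=\phi(t)/t$, and then asserts that the resulting term $(p-s)\cdot 2\phi(t)/t$ is at most $0.204\,s<s/4$ for all $0\leq s\leq p$.

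Your instinct that the one-term bound is too crude to deliver the constant $\tfrac{5}{4}$ is, in fact, correct. Carrying the constants through, $(p-s)\cdot 2\phi(t)/t$ with $t=\sqrt{2\log(p/s)}$ equals $s(1-s/p)/\sqrt{\pi\log(p/s)}$ (the paper's printed denominator $\pi\sqrt{\log(p/s)}$ should be $\sqrt{\pi\log(p/s)}$), and the supremum of $(1-u)/\sqrt{\pi\log(1/u)}$ over $u\in(0,1)$ is about $0.36$, attained near $u\approx 0.28$ --- not $0.204$. So the paper's one-term route, executed with correct constants, proves $2s\log(p/s)+1.36s$; a sharper tail estimate of the kind you propose is genuinely needed for $\tfrac{5}{4}$. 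However, your own plan is incomplete precisely where you wave it off. The three-term bound gives $\E\left[(|g|-t)_+^2\right]\leq 2\phi(t)\left(2/t^3+3/t^5\right)$, and with the canonical $t$ this stays below $s/4$ only for $p/s$ larger than about $7$; the fallback $w^2\leq p$ works only when $2u\log(1/u)+\tfrac{5}{4}u\geq 1$ for $u=s/p$, i.e., only for $p/s$ up to about $4$, with essentially no slack at that endpoint. On the band $4\lesssim p/s\lesssim 7$ neither device closes the argument: there you must re-optimize $t$ strictly below $\sqrt{2\log(p/s)}$, and even then the margins are razor-thin --- at $s/p=1/4$ and $t=1.5$ the bound evaluates to $\approx 1.0044\,p$ against a target of $\approx 1.0056\,p$. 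So the ``secondary subtlety'' in your last sentence is not a degenerate corner case; the moderate-sparsity regime is where all the remaining work, and essentially all the slack in the constant $\tfrac{5}{4}$, actually lives.
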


\begin{proposition}\label{prop:nuclear}
Let $\bxs$ be an $m_1 \times m_2$ rank-$r$ matrix with $m_1 \leq m_2$.  Letting $\A$ denote the set of unit-Euclidean-norm rank-one matrices, we have that
\begin{equation*}
w\left(T_{\A}(\bxs) \cap \Sp^{m_1 m_2-1}\right)^2 \leq 3 r(m_1+m_2-r).
\end{equation*}
Thus $3 r(m_1+m_2-r)+1$ random Gaussian measurements suffice to recover $\bxs$ via nuclear norm minimization with high probability.
\end{proposition}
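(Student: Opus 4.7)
The plan is to apply Proposition~\ref{prop:dual-width} so that it suffices to estimate $\E_G[\mathrm{dist}(G, N_\A(\bxs))^2]$, where $G$ is an $m_1 \times m_2$ matrix with i.i.d.\ standard Gaussian entries and $N_\A(\bxs)$ is the normal cone to the nuclear norm ball at $\bxs$. Writing a compact SVD $\bxs = U \Sigma V^T$ with $U \in \R^{m_1 \times r}$ and $V \in \R^{m_2 \times r}$, and recalling that $N_\A(\bxs)$ is the conic hull of the subdifferential of the nuclear norm, I would use the standard characterization
\begin{equation*}
N_\A(\bxs) = \left\{ \tau\, UV^T + \tau\, W ~:~ \tau \geq 0,\ U^T W = 0,\ W V = 0,\ \|W\|_{\mathrm{op}} \leq 1 \right\}.
\end{equation*}
Let $T$ denote the linear span of matrices of the form $UX^T + Y V^T$ (the tangent space to the rank-$r$ manifold at $\bxs$), so that $\dim(T) = r(m_1+m_2-r)$, and let $T^\perp$ be its orthogonal complement with respect to the Frobenius inner product, which consists exactly of matrices $W$ satisfying $U^T W = 0$ and $WV = 0$.

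The key step is to pick a convenient feasible element of $N_\A(\bxs)$. Set $\tau := \|P_{T^\perp} G\|_{\mathrm{op}}$ and $W := P_{T^\perp} G / \tau$; this is feasible since $P_{T^\perp} G \in T^\perp$ and $\|W\|_{\mathrm{op}} = 1$. Because $P_T$ and $P_{T^\perp}$ yield orthogonal components, this choice makes the $T^\perp$ part of the distance vanish, giving
\begin{equation*}
\mathrm{dist}(G, N_\A(\bxs))^2 \leq \|P_T G - \tau\, UV^T\|_F^2 = \|P_T G\|_F^2 - 2\tau \langle UV^T, G\rangle + \tau^2 \|UV^T\|_F^2.
\end{equation*}
Taking expectations and using that $P_T G$ and $P_{T^\perp} G$ are independent Gaussians, that $\E[\|P_T G\|_F^2] = \dim(T) = r(m_1+m_2-r)$, that $\langle UV^T, G\rangle$ is a mean-zero scalar Gaussian (hence $\E[\tau \langle UV^T, G\rangle] = \E[\tau]\cdot 0 = 0$), and that $\|UV^T\|_F^2 = r$, I get
\begin{equation*}
\E[\mathrm{dist}(G,N_\A(\bxs))^2] \leq r(m_1+m_2-r) + r\, \E\!\left[\tau^2\right].
\end{equation*}

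The remaining task is to bound $\E[\tau^2]$, where $\tau$ is the operator norm of a Gaussian matrix supported on $T^\perp$, which is isometric to an $(m_1-r) \times (m_2-r)$ matrix with i.i.d.\ standard Gaussian entries. By Gordon's inequality $\E[\tau] \leq \sqrt{m_1-r}+\sqrt{m_2-r}$, and since the operator norm is $1$-Lipschitz in the Frobenius norm, Gaussian concentration yields $\mathrm{Var}(\tau) \leq 1$. Thus
\begin{equation*}
\E[\tau^2] \leq \bigl(\sqrt{m_1-r}+\sqrt{m_2-r}\bigr)^2 + 1 \leq 2(m_1+m_2-2r)+1
\end{equation*}
by AM--GM. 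Combining, one obtains $\E[\mathrm{dist}(G,N_\A(\bxs))^2] \leq 3r(m_1+m_2-r) - r(2r-1) \leq 3r(m_1+m_2-r)$, which via Proposition~\ref{prop:dual-width} and \eqref{eq:square-jensen-bound} gives the claimed width bound; Corollary~\ref{corl:width} then converts this into the measurement count.

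The main obstacle is the choice of the scale parameter $\tau$: naive choices (for instance $\tau = 1$, or minimizing only the $T$-component) give constants $5$ or $6$ instead of $3$. The trick is that setting $\tau = \|P_{T^\perp} G\|_{\mathrm{op}}$ both eliminates the $T^\perp$ contribution and keeps the cross term $\E[\tau \langle UV^T, G\rangle]$ equal to zero by independence—letting the sharper bound fall out with essentially no slack.
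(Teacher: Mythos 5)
Your proposal is correct and follows essentially the same route as the paper: the same normal-cone characterization, the same choice of witness $Z(G) = \|P_{T^\perp}G\|_{\mathrm{op}}\,UV^T + P_{T^\perp}G$, the same independence argument killing the cross term, and the same final algebra. The only (immaterial) difference is how you bound $\E[\|P_{T^\perp}G\|_{\mathrm{op}}^2]$ — you use Gordon's expectation bound plus the Gaussian Poincar\'e variance bound to get $\mu^2+1$, whereas the paper integrates the Davidson--Szarek tail bound to get $\mu^2+2$; both suffice.
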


The proofs of these propositions are given in Appendix~\ref{app:direct}.  The number of measurements required by these bounds is on the same order as previously known results. In the case of sparse vectors, previous results getting $2s\log (p/s)$ were asymptotic \cite{DonT2009}.  Our bounds, in contrast, hold with high probability in finite dimensions.  In the case of low-rank matrices, our bound provides considerably sharper constants than those previously derived (as in, for example~\cite{CanP2009}).  We also note that we have robust recovery at these thresholds.  Further these results do not require explicit recourse to any type of restricted isometry property \cite{CanP2009}, and the proofs are simple and based on elementary integrals.

Next we obtain a set of recovery results by appealing to Corollary~\ref{corl:selfdual} on the width of a self-dual cone.  These examples correspond to the recovery of individual atoms (i.e., the extreme points of the set $\ch(\A)$), although the same machinery is applicable in principle to estimate the number of measurements required to recover models formed as sums of a few atoms (i.e., points lying on low-dimensional faces of $\ch(\A)$).  We first obtain a well-known result on the number of measurements required for recovering sign-vectors via $\ell_\infty$ norm minimization.

\begin{proposition} \label{prop:sign}
Let $\A \in \{-1,+1\}^p$ be the set of sign-vectors in $\R^p$.  Suppose $\bxs \in \R^p$ is a vector formed as a convex combination of $k$ sign-vectors in $\A$ such that $\bxs$ lies on a $k$-face of the $\ell_\infty$-norm unit ball.  Then we have that
\begin{equation*}
w(T_{\A}(\bxs) \cap \Sp^{p-1})^2 \leq \frac{p+k}{2}.
\end{equation*}
Thus $\tfrac{p+k}{2}$ random Gaussian measurements suffice to recover $\bxs$ via $\ell_\infty$-norm minimization with high probability.
\end{proposition}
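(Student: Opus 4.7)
The plan is to compute the tangent cone $T_\A(\bxs)$ explicitly, recognize its ``saturated'' part as a signed orthant, and then chain two width estimates from Section~\ref{subsec:widthprop}: the subspace-plus-orthogonal-cone inequality \eqref{eq:width-sub-cone} and the self-dual cone bound in Corollary~\ref{corl:selfdual}.

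Because $\bxs$ lies on a $k$-face of the $\ell_\infty$-unit ball, I would partition the coordinates $\{1,\dots,p\}$ into a ``free'' set $F$ of size $k$ on which $|\bxs_i|<1$, and a ``saturated'' set $S$ of size $p-k$ on which $\bxs_j = \epsilon_j \in \{\pm 1\}$. A short argument using $\|\bxs + t\mathbf{d}\|_\infty \leq 1$ for small $t>0$ then shows
\[
T_\A(\bxs) \;=\; \R^F \;\oplus\; T', \qquad T' \;=\; \{\mathbf{d}\in \R^S : \epsilon_j \mathbf{d}_j \leq 0 \text{ for all } j\in S\},
\]
where $\R^F$ denotes the $k$-dimensional coordinate subspace corresponding to the free indices and $T'$ lives in the orthogonal $(p-k)$-dimensional coordinate subspace $\R^S$. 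The point is that saturated coordinates can only shrink in magnitude, while the free coordinates are unconstrained at first order.

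Applying the direct-sum width inequality \eqref{eq:width-sub-cone} to this decomposition gives
\[
w\bigl(T_\A(\bxs)\cap\Sp^{p-1}\bigr)^2 \;\leq\; k \;+\; w\bigl(T' \cap \Sp^{p-k-1}\bigr)^2.
\]
The remaining step is to bound the width of $T'$. Since $T'$ is a signed orthant in $\R^S \cong \R^{p-k}$, its polar is obtained by flipping every sign, so $T'^{\ast} = -T'$ and $T'$ is self-dual in the sense of Corollary~\ref{corl:selfdual}. That corollary gives $w(T' \cap \Sp^{p-k-1})^2 \leq (p-k)/2$, and combining this with the previous display yields $w(T_\A(\bxs)\cap\Sp^{p-1})^2 \leq (p+k)/2$, as required. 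The measurement count then follows immediately from Corollary~\ref{corl:width}.

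I anticipate no serious obstacle. The only step meriting care is the explicit description of the tangent cone; once the direct-sum structure is in hand, both the reduction to the saturated-coordinate subspace and the self-dual orthant bound are essentially automatic from the machinery already developed in the section.
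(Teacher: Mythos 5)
Your proposal is correct and follows essentially the same route as the paper: the paper's proof also identifies $T_\A(\bxs)$ as the direct sum of a $k$-dimensional subspace and a (rotated) $(p-k)$-dimensional nonnegative orthant, then combines the self-dual cone bound of Corollary~\ref{corl:selfdual} with the direct-sum inequality \eqref{eq:width-sub-cone}. Your write-up simply makes explicit the free/saturated coordinate decomposition and the verification that the signed orthant is self-dual, details the paper leaves implicit.
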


\begin{proof}
The tangent cone at $\bxs$ with respect to the $\ell_\infty$-norm ball is the direct sum of a $k$-dimensional subspace and a (rotated) $(p-k)$-dimensional nonnegative orthant.  As the orthant is self-dual, we obtain the required bound by combining Corollary~\ref{corl:selfdual} and \eqref{eq:width-sub-cone}.
\end{proof}

This result agrees with previously computed bounds in \cite{ManR2009,DonT2010}, which relied on a more complicated combinatorial argument.  Next we compute the number of measurements required to recover orthogonal matrices via spectral-norm minimization (see Section~\ref{subsec:ex}).  Let $\mathbb{O}(m)$ denote the group of $m \times m$ orthogonal matrices, viewed as a subgroup of the set of nonsingular matrices in $\R^{m \times m}$.

\begin{proposition} \label{prop:ortho}
Let $\bxs \in \R^{m \times m}$ be an orthogonal matrix, and let $\A$ be the set of all orthogonal matrices.  Then we have that
\begin{equation*}
w(T_{\A}(\bxs) \cap \Sp^{m^2-1})^2 \leq \frac{3 m^2 - m}{4}.
\end{equation*}
Thus $\tfrac{3 m^2 - m}{4}$ random Gaussian measurements suffice to recover $\bxs$ via spectral-norm minimization with high probability.
\end{proposition}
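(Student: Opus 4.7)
The plan is to use orthogonal invariance to reduce to the case $\bxs = I$, explicitly describe the tangent cone $T_\A(I)$, and then apply the direct-sum bound \eqref{eq:width-sub-cone} together with Corollary~\ref{corl:selfdual}.

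First, observe that the spectral norm (and hence $\ch(\A)$, the spectral norm ball) is invariant under left and right multiplication by orthogonal matrices. For any $\bxs = Q \in \mathbb{O}(m)$, the map $X \mapsto Q^T X$ is an isometry of $\R^{m \times m}$ that sends $T_\A(Q)$ to $T_\A(I)$, and the Gaussian width of a cone is invariant under orthogonal transformations. So it suffices to bound $w(T_\A(I) \cap \Sp^{m^2-1})^2$.

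Next I compute the tangent cone at $I$. Since $I$ is an extreme point of the spectral norm ball with all singular values equal to $1$ (and the left/right singular vectors coincide), the subdifferential of the spectral norm at $I$ is $\{Y : Y \succeq 0, \tr(Y) = 1\}$. The normal cone $N_\A(I)$ is the conic hull of the subdifferential, which is the PSD cone in the space of symmetric matrices, and its polar (in the ambient space $\R^{m \times m}$ with the Frobenius inner product) is
\begin{equation*}
T_\A(I) = \{X \in \R^{m \times m} : X + X^T \preceq 0\}.
\end{equation*}
Writing $X = S + A$ with $S = (X + X^T)/2$ symmetric and $A = (X - X^T)/2$ antisymmetric, the condition becomes $S \preceq 0$, while $A$ is unconstrained. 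Since symmetric and antisymmetric matrices are orthogonal with respect to the Frobenius inner product, we have the orthogonal direct sum decomposition
\begin{equation*}
T_\A(I) = T_1 \oplus T_2,
\end{equation*}
where $T_1$ is the subspace of antisymmetric matrices, of dimension $m(m-1)/2$, and $T_2$ is the cone of negative semidefinite symmetric matrices, lying in the $m(m+1)/2$-dimensional space of symmetric matrices.

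Now I apply the two tools. By \eqref{eq:width-sub-cone},
\begin{equation*}
w(T_\A(I) \cap \Sp^{m^2-1})^2 \;\leq\; \dim(T_1) + w(T_2 \cap \Sp_{T_2})^2 \;=\; \frac{m(m-1)}{2} + w(T_2 \cap \Sp_{T_2})^2,
\end{equation*}
where $\Sp_{T_2}$ is the unit sphere in the ambient subspace of symmetric matrices. The cone $T_2$ is (up to sign) the PSD cone inside the space of symmetric matrices, which is self-dual there, so Corollary~\ref{corl:selfdual} applied in that ambient space of dimension $m(m+1)/2$ gives $w(T_2 \cap \Sp_{T_2})^2 \leq m(m+1)/4$. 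Combining,
\begin{equation*}
w(T_\A(I) \cap \Sp^{m^2-1})^2 \;\leq\; \frac{m(m-1)}{2} + \frac{m(m+1)}{4} \;=\; \frac{3m^2 - m}{4},
\end{equation*}
and the measurement bound then follows from Corollary~\ref{corl:width}. The main obstacle is the correct identification of $T_\A(I)$: one must justify the characterization of the normal cone via the subdifferential of the spectral norm at the identity, where the singular values are all equal to one, and then recognize that the self-dual structure of the PSD cone lets Corollary~\ref{corl:selfdual} do the remaining work for free.
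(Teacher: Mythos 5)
Your proof is correct, and its skeleton coincides with the paper's: reduce to $\bxs = I$ by orthogonal invariance, identify the tangent cone as the orthogonal direct sum of the skew-symmetric subspace and the negative-semidefinite cone, and conclude via \eqref{eq:width-sub-cone} and Corollary~\ref{corl:selfdual} that the squared width is at most $\frac{m(m-1)}{2} + \frac{m(m+1)}{4} = \frac{3m^2-m}{4}$. Where you genuinely differ is in how $T_\A(I)$ is identified. The paper argues primally: the Lie algebra of $\mathbb{O}(m)$ (the skew-symmetric matrices) lies inside $T_\A(I)$, and the symmetric component is computed directly from the definition of the tangent cone, using eigendecompositions of symmetric matrices in the spectral norm ball, to be $-\mathrm{PSD}_m$. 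You argue dually: the subdifferential of the spectral norm at $I$ is $\{Y : Y \succeq 0, \, \tr(Y) = 1\}$, hence $N_\A(I) = \mathrm{PSD}_m$, hence $T_\A(I) = N_\A(I)^\ast = \{X : X + X^T \preceq 0\}$. Your route buys something real: it pins down the entire tangent cone in closed form, so the splitting $T_1 \oplus T_2$ is immediate, whereas the paper exhibits the two pieces but leaves implicit the containment of the whole cone in their direct sum (one must check that the symmetric part of any $Z - I$ with $\|Z\|_\A \leq 1$ is again of the form $M - I$ with $M$ symmetric and $\|M\|_\A \leq 1$, which holds because symmetrization does not increase the spectral norm). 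The price is that you assert, rather than prove, the subdifferential formula at a point where all singular values coincide; this does follow in two lines from the general fact $\partial \|\bx\|_\A = \{\by : \|\by\|^\ast_\A \leq 1, \, \langle \by, \bx \rangle = \|\bx\|_\A\}$, since $\tr(Y) = 1$ together with dual (nuclear) norm at most one forces the trace of $Y$ to equal its sum of singular values, which happens precisely when $Y$ is symmetric positive semidefinite. Both arguments are sound; yours is slightly more self-contained logically, while the paper's stays within elementary manipulations of the atoms themselves.
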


\begin{proof}
Due to the symmetry of the orthogonal group, it suffices to consider the tangent cone at the identity matrix $I$ with respect to the spectral norm ball.  Recall that the spectral norm ball is the convex hull of the orthogonal matrices.  Therefore the \emph{tangent space} at the identity matrix with respect to the orthogonal group $\mathbb{O}(m)$ is a subset of the tangent cone $T_\A(I)$.  It is well-known that this tangent space is the Lie Algebra of all $m \times m$ skew-symmetric matrices.  Thus we only need to compute the component $S$ of $T_\A(I)$ that lies in the subspace of symmetric matrices:
\begin{eqnarray*}
S &=& \mathrm{cone}\{M - I : \|M\|_\A \leq 1, ~ M \mathrm{~ symmetric} \} \\ &=& \mathrm{cone}\{U D U^T - U U^T : \|D\|_\A \leq 1, ~ D \mathrm{~ diagonal}, ~ U \in \mathbb{O}(m) \} \\ &=& \mathrm{cone}\{U (D-I) U^T: \|D\|_\A \leq 1, ~ D \mathrm{~ diagonal}, ~ U \in \mathbb{O}(m) \} \\ &=& -\mathrm{PSD}_m.
\end{eqnarray*}
Here $\mathrm{PSD}_m$ denotes the set of $m \times m$ symmetric positive-semidefinite matrices.  As this cone is self-dual, we can apply Corollary~\ref{corl:selfdual} in conjunction with the observations in Section~\ref{subsec:widthprop} to conclude that
\begin{equation*}
w(T_\A(I) \cap \Sp^{m^2-1})^2 \leq {m \choose 2} + \frac{1}{2}{m+1 \choose 2} = \frac{3m^2 - m}{4}.
\end{equation*}
\end{proof}

We note that the number of degrees of freedom in an $m \times m$ orthogonal matrix (i.e., the dimension of the manifold of orthogonal matrices) is $\tfrac{m(m-1)}{2}$.  Proposition~\ref{prop:sign} and Proposition~\ref{prop:ortho} point to the importance of obtaining recovery bounds with sharp constants.  Larger constants in either result would imply that the number of measurements required exceeds the ambient dimension of the underlying $\bxs$.  In these and many other cases of interest Gaussian width arguments not only give order-optimal recovery results, but also provide precise constants that result in sharp recovery thresholds.

Finally we give a third set of recovery results that appeal to the Gaussian width bound of Theorem~\ref{theo:angle}.  The following measurement bound applies to cases when $\ch(\A)$ is a \emph{symmetric polytope} (roughly speaking, all the vertices are ``equivalent''), and is a simple corollary of Theorem~\ref{theo:angle}.

\begin{corollary} \label{corl:symm}
Suppose that the set $\A$ is a finite collection of $m$ points, with the convex hull $\ch(\A)$ being a vertex-transitive polytope \cite{Zie1995} whose vertices are the points in $\A$.  Using the convex program \eqref{eq:atomic-norm-primal} we have that $9 \log(m)$ random Gaussian measurements suffice, with high probability, for exact recovery of a point in $\A$, i.e., a vertex of $\ch(\A)$.
\end{corollary}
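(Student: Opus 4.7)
The strategy is to bound the Gaussian width of the tangent cone $T_\A(\bxs)$ via the volume of the polar (normal) cone using Theorem~\ref{theo:angle}, and then invoke Corollary~\ref{corl:width}. The key observation is that vertex-transitivity of $\ch(\A)$ forces the normal cone at any vertex to occupy exactly a $1/m$ fraction of the sphere, so the bound reduces to an essentially free application of Theorem~\ref{theo:angle} with $\Theta = 1/m$.

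First I would fix $\bxs \in \A$ and, using the centering convention of Section~\ref{subsec:def}, identify the affine hull $V = \mathrm{aff}(\ch(\A))$ with a linear subspace of $\R^p$ of dimension $p'$. Since the gauge is $+\infty$ off of $V$, the tangent cone $T_\A(\bxs)$ is contained in $V$, and its Gaussian width in $\R^p$ coincides with its Gaussian width computed intrinsically in $V \cong \R^{p'}$. All subsequent volume and polarity computations will be carried out in $V$ so that $\ch(\A)$ is a full-dimensional polytope.

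Next I would establish that $\mathrm{vol}(N_\A(\bxs) \cap \Sp^{p'-1}) = 1/m$. The normal cones $\{N_\A(\ba) : \ba \in \A\}$ are exactly the maximal cones of the normal fan of the full-dimensional polytope $\ch(\A)$ in $V$, and hence their intersections with the sphere of $V$ cover it and overlap only on a set of spherical measure zero (adjacent cones meet along lower-dimensional walls coming from edges of $\ch(\A)$). Vertex-transitivity, in the sense of \cite{Zie1995}, provides a subgroup $G$ of the orthogonal group of $V$ that fixes $\ch(\A)$ setwise and acts transitively on the vertex set $\A$; each $g \in G$ is an isometry, sends $N_\A(\ba)$ to $N_\A(g \cdot \ba)$, and preserves uniform spherical measure. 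Therefore all $m$ normal cones have equal spherical volume, which together with the tiling forces that common value to be $1/m$.

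Finally I would apply Theorem~\ref{theo:angle} with $\mathcal{C} = T_\A(\bxs)$ and $\Theta = 1/m$ (valid once $p' \geq 9$, which is automatic for any interesting polytope) to conclude
\begin{equation*}
w(T_\A(\bxs) \cap \Sp^{p-1})^2 \;\leq\; 9\log(4m),
\end{equation*}
and then plug this into part (1) of Corollary~\ref{corl:width}, which yields exact recovery with high probability as soon as $n \geq w^2 + 1$; since $9\log(4m) + 1 = 9\log m + O(1)$, this is absorbed into the advertised bound of $9\log m$ measurements. The main obstacle is really just the volume-fraction calculation in the middle step: one has to be slightly careful to pass to the affine hull so that the normal fan genuinely tiles the sphere, after which vertex-transitivity makes the equal-volumes conclusion immediate and everything else is a direct appeal to the machinery already developed.
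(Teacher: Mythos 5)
Your proof is correct and takes essentially the same route as the paper's own three-line argument: the paper likewise notes that the normal cones at the vertices of the polytope partition space, invokes vertex-transitivity to conclude the normal cone at each vertex occupies a $\tfrac{1}{m}$ fraction of it, and then applies Theorem~\ref{theo:angle} together with Corollary~\ref{corl:width}. Your extra steps—passing to the affine hull so the cone is solid, making the isometry-group argument for equal spherical volumes explicit, and flagging that the bound is really $9\log(4m)+1$ with the constant absorbed—are just careful elaborations of what the paper's proof glosses over.
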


\begin{proof}
We recall the basic fact from convex analysis that the normal cones at the vertices of a convex polytope in $\R^p$ provide a partitioning of $\R^p$.  As $\ch(\A)$ is a vertex-transitive polytope, the normal cone at a vertex covers $\tfrac{1}{m}$ fraction of $\R^p$.  Applying Theorem~\ref{theo:angle}, we have the desired result.
\end{proof}

Clearly we require the number of vertices to be bounded as $m \leq \exp\{\tfrac{p}{9}\}$, so that the estimate of the number of measurements is not vacuously true.  This result has useful consequences in settings in which $\ch(\A)$ is a \emph{combinatorial polytope}, as such polytopes are often vertex-transitive.  We have the following example on the number of measurements required to recover permutation matrices\footnote{While Proposition~\ref{prop:birkhoff} follows as a consequence of the general result in Corollary~\ref{corl:symm}, one can remove the constant factor $9$ in the statement of Proposition~\ref{prop:birkhoff} by carrying out a more refined analysis of the Birkhoff polytope.}:

\begin{proposition} \label{prop:birkhoff}
Let $\bxs \in \R^{m \times m}$ be a permutation matrix, and let $\A$ be the set of all $m \times m$ permutation matrices.  Then $9 \, m \log(m)$ random Gaussian measurements suffice, with high probability, to recover $\bxs$ by solving the optimization problem \eqref{eq:atomic-norm-primal}, which minimizes the norm induced by the Birkhoff polytope of doubly stochastic matrices.
\end{proposition}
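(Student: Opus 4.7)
The plan is to derive Proposition~\ref{prop:birkhoff} as an immediate application of Corollary~\ref{corl:symm} to the Birkhoff polytope. First, I would recall that the set $\A$ of $m \times m$ permutation matrices is a finite collection of $m!$ points, and by the Birkhoff--von Neumann theorem the convex hull $\ch(\A)$ is exactly the Birkhoff polytope whose vertex set is precisely $\A$. After recentering about $\mathbf{1}\mathbf{1}^T/m$ (as noted in Section~\ref{subsec:ex}), the polytope is well-situated for the atomic norm construction.

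Next I would verify vertex-transitivity. Given any two permutation matrices $P_1, P_2 \in \A$, the map $X \mapsto (P_2 P_1^{-1}) X$ is a linear isometry of $\R^{m \times m}$ that permutes the rows of $X$; it maps permutation matrices to permutation matrices, and thus induces a symmetry of $\ch(\A)$ that sends $P_1$ to $P_2$. Hence the symmetry group of the Birkhoff polytope acts transitively on its vertices, i.e., the polytope is vertex-transitive in the sense of Corollary~\ref{corl:symm}.

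With these two observations in hand, I apply Corollary~\ref{corl:symm} with the number of vertices equal to $m!$. This yields that $9 \log(m!)$ random Gaussian measurements suffice, with high probability, to exactly recover any vertex $\bxs$ of $\ch(\A)$ via the convex program \eqref{eq:atomic-norm-primal} with the atomic norm induced by the (recentered) Birkhoff polytope. To arrive at the stated $9 m \log m$ bound, I use the elementary inequality $m! \leq m^m$, so $\log(m!) \leq m \log m$.

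I do not expect any genuine obstacle in this proof; the argument is essentially a bookkeeping consequence of two facts (vertex-transitivity and the counting bound $m! \leq m^m$) once Corollary~\ref{corl:symm} is available. The only subtle point worth flagging is the footnote remark that the constant $9$ is not optimal: a finer analysis of the geometry of the Birkhoff polytope, e.g., by directly estimating the Gaussian width of the tangent cone at a permutation matrix using the dual characterization in Proposition~\ref{prop:dual-width}, would remove this factor. But for the stated bound, invoking Corollary~\ref{corl:symm} is sufficient and the proof is essentially immediate.
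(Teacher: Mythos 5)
Your proposal is correct and follows exactly the paper's own route: the paper's proof is the one-line observation that Corollary~\ref{corl:symm} applies since the Birkhoff polytope is vertex-transitive with $m!$ vertices, giving $9\log(m!) \leq 9\,m\log(m)$ measurements. Your write-up simply makes explicit the two steps (verifying vertex-transitivity via left-multiplication by a permutation, and the bound $\log(m!) \leq m \log m$) that the paper leaves implicit.
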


\begin{proof}
This result follows from Corollary~\ref{corl:symm} by noting that there are $m!$ permutation matrices of size $m \times m$.
\end{proof}



\section{Representability and Algebraic Geometry of Atomic Norms}
\label{sec:rep}


All of our discussion thus far has focussed on arbitrary atomic sets $\A$.  As seen in Section~\ref{sec:def} the geometry of the convex hull $\ch(\A)$ completely determines conditions under which exact recovery is possible using the convex program \eqref{eq:atomic-norm-primal}.  In this section we address the question of computing atomic norms for general sets of atoms.  These issues are critical in order to be able to solve the convex optimization problem \eqref{eq:atomic-norm-primal}.  Although the convex hull $\ch(\A)$ is always a mathematically well-defined object, testing membership in this set is in general undecidable (for example, if $\A$ is a fractal).  Further, even if these convex hulls are computable they may not admit efficient representations.  For example if $\A$ is the set of rank-one signed matrices (see Section~\ref{subsec:ex}), the corresponding convex hull $\ch(\A)$ is the cut polytope for which there is no known tractable characterization.  Consequently, one may have to resort to efficiently computable approximations of $\ch(\A)$. The tradeoff in using such approximations in our atomic norm minimization framework is that we require more measurements for robust recovery.  This section is devoted to providing a better understanding of these issues.

\subsection{Role of Algebraic Structure}
\label{subsec:algst}

In order to obtain exact or approximate representations (analogous to the cases of the $\ell_1$ norm and the nuclear norm) it is important to identify properties of the atomic set $\A$ that can be exploited computationally.  We focus on cases in which the set $\A$ has algebraic structure.  Specifically let the ring of multivariate polynomials in $p$ variables be denoted by $\R[\bx] = \R[\bx_1,\dots,\bx_p]$.  We then consider real algebraic varieties \cite{BocCR1998}:

\begin{definition}
A \emph{real algebraic variety} $S \subseteq \R^p$ is the set of real solutions of a system of polynomial equations:
\begin{equation*}
S = \{\bx : g_j(\bx) = 0, ~ \forall j\},
\end{equation*}
where $\{g_j\}$ is a finite collection of polynomials in $\R[\bx]$.
\end{definition}

Indeed all of the atomic sets $\A$ considered in this paper are examples of algebraic varieties.  Algebraic varieties have the remarkable property that (the closure of) their convex hull can be arbitrarily well-approximated in a constructive manner as (the projection of) a set defined by linear matrix inequality constraints \cite{GouPT2010,Par2003}.  A potential complication may arise, however, if these semidefinite representations are intractable to compute in polynomial time.  In such cases it is possible to approximate the convex hulls via a hierarchy of tractable semidefinite relaxations.  We describe these results in more detail in Section~\ref{subsec:psatz}.  Therefore the atomic norm minimization problems such as \eqref{eq:noisy-atomic-norm-primal} arising in such situations can be solved exactly or approximately via semidefinite programming.

Algebraic structure also plays a second important role in atomic norm minimization problems.  If an atomic norm $\| \cdot \|_\A$ is intractable to compute, we may approximate it via a more tractable norm $\|\cdot\|_{app}$.  However not every approximation of the atomic norm is equally good for solving inverse problems. As illustrated in Figure~\ref{fig:fig2} we can construct approximations of the $\ell_1$ ball that are tight in a \emph{metric} sense, with $(1-\epsilon) \| \cdot \|_{app} \leq \|\cdot\|_{\ell_1} \leq (1+\epsilon) \|\cdot\|_{app}$, but where the tangent cones at sparse vectors in the new norm are halfspaces. In such a case, the number of measurements required to recover the sparse vector ends up being on the same order as the ambient dimension.  (Note that the $\ell_1$-norm is in fact tractable to compute; we simply use it here for illustrative purposes.)  The key property that we seek in approximations to an atomic norm $\|\cdot\|_\A$ is that they \emph{preserve algebraic structure} such as the vertices/extreme points and more generally the low-dimensional faces of the $\ch(\A)$.  As discussed in Section~\ref{subsec:why} points on such low-dimensional faces correspond to simple models, and algebraic-structure preserving approximations ensure that the tangent cones at simple models with respect to the approximations are not too much larger than the corresponding tangent cones with respect to the original atomic norms (see Section~\ref{subsec:tradeoff} for a concrete example).

\begin{figure}
\begin{center}
\epsfig{file=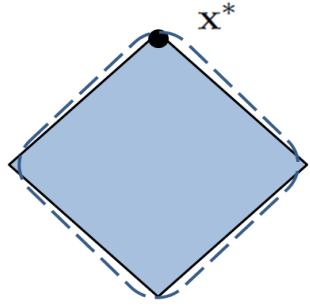,width=4cm,height=4cm} \caption{The convex body given by the dotted line is a good metric approximation to the $\ell_1$ ball.  However as its ``corners'' are ``smoothed out'', the tangent cone at $\bxs$ goes from being a proper cone (with respect to the $\ell_1$ ball) to a halfspace (with respect to the approximation).} \label{fig:fig2}
\end{center}
\end{figure}

\subsection{Semidefinite Relaxations using Theta Bodies}
\label{subsec:psatz}

In this section we give a family of semidefinite relaxations to the atomic norm minimization problem whenever the atomic set has algebraic structure.  To begin with if we approximate the atomic norm $\|\cdot\|_\A$ by another atomic norm $\|\cdot\|_{\tilde{\A}}$ defined using a \emph{larger} collection of atoms $\A \subseteq \tilde{\A}$, it is clear that
\begin{equation*}
\| \cdot \|_{\tilde{\A}} \leq \|\cdot\|_\A.
\end{equation*}
Consequently outer approximations of the atomic set give rise to approximate norms that provide lower bounds on the optimal value of the problem \eqref{eq:atomic-norm-primal}.

In order to provide such lower bounds on the optimal value of \eqref{eq:atomic-norm-primal}, we discuss semidefinite relaxations of the convex hull $\ch(\A)$.  All our discussion here is based on results described in \cite{GouPT2010} for semidefinite relaxations of convex hulls of algebraic varieties using theta bodies.  We only give a brief review of the relevant constructions, and refer the reader to the vast literature on this subject for more details (see \cite{GouPT2010,Par2003} and the references therein).  For subsequent reference in this section, we recall the definition of a polynomial ideal \cite{BocCR1998,Har95}:

\begin{definition}
A \emph{polynomial ideal} $I \subset \R[\bx]$ is a subset of the ring of polynomials that contains the zero polynomial (the polynomial that is identically zero), is closed under addition, and has the property that $f \in I, g \in \R[\bx]$ implies that $f \cdot g \in I$.
\end{definition}

To begin with we note that a \emph{sum-of-squares} (SOS) polynomial in $\R[\bx]$ is a polynomial that can be written as the (finite) sum of squares of other polynomials in $\R[\bx]$.  Verifying the nonnegativity of a multivariate polynomial is intractable in general, and therefore SOS polynomials play an important role in real algebraic geometry as an SOS polynomial is easily seen to be nonnegative everywhere.  Further checking whether a polynomial is an SOS polynomial can be accomplished efficiently via semidefinite programming \cite{Par2003}.

Turning our attention to the description of the convex hull of an algebraic variety, we will assume for the sake of simplicity that the convex hull is closed.  Let $I \subseteq \R[\bx]$ be a polynomial ideal, and let $V_\R(I) \in \R^p$ be its real algebraic variety:
\begin{equation*}
V_\R(I) = \{\bx : f(\bx) = 0, ~ \forall f \in I\}.
\end{equation*}
One can then show that the convex hull $\ch(V_\R(I))$ is given as:
\begin{eqnarray*}
\ch(V_\R(I)) &=& \{\bx : f(\bx) \geq 0, ~ \forall f ~\mathrm{linear ~ and ~ nonnegative ~ on ~} V_\R(I)\} \\ &=& \{\bx : f(\bx) \geq 0, ~ \forall f ~\mathrm{linear ~ s.t. ~} f = h + g, ~\forall~ h ~\mathrm{nonnegative, ~} \forall ~ g \in I\} \\ &=& \{\bx : f(\bx) \geq 0, ~ \forall f ~\mathrm{linear ~ s.t. ~} f ~\mathrm{nonnegative ~ modulo~} I\}.
\end{eqnarray*}
A linear polynomial here is one that has a maximum degree of one, and the meaning of ``modulo an ideal'' is clear.  As nonnegativity modulo an ideal may be intractable to check, we can consider a relaxation to a polynomial being SOS modulo an ideal, i.e., a polynomial that can be written as $\sum_{i=1}^q ~ h_i^2 ~ + ~ g$ for $g$ in the ideal.  Since it is tractable to check via semidefinite programmming whether bounded-degree polynomials are SOS, the $k$-th theta body of an ideal $I$ is defined as follows in \cite{GouPT2010}:
\begin{equation*}
\mathrm{TH}_k(I) = \{\bx : f(\bx) \geq 0, ~ \forall f ~\mathrm{linear ~ s.t. ~} f ~\mathrm{is~}k\mathrm{\mbox{-}sos ~ modulo~} I\}.
\end{equation*}
Here $k$-sos refers to an SOS polynomial in which the components in the SOS decomposition have degree at most $k$.  The $k$-th theta body $\mathrm{TH}_k(I)$ is a convex relaxation of $\ch(V_\R(I))$, and one can verify that
\begin{equation*}
\ch(V_\R(I)) \subseteq \cdots \subseteq \mathrm{TH}_{k+1}(I) \subseteq \mathrm{TH}_{k}(V_\R(I)).
\end{equation*}
By the arguments given above (see also \cite{GouPT2010}) these theta bodies can be described using semidefinite programs of size polynomial in $k$.  Hence by considering theta bodies $\mathrm{TH}_k(I)$ with increasingly larger $k$, one can obtain a hierarchy of tighter semidefinite relaxations of $\ch(V_\R(I))$.  We also note that in many cases of interest such semidefinite relaxations preserve low-dimensional faces of the convex hull of a variety, although these properties are not known in general.  We will use some of these properties below when discussing approximations of the cut-polytope.

\textbf{Approximating tensor norms.} We conclude this section with an example application of these relaxations to the problem of approximating the tensor nuclear norm.  We focus on the case of tensors of order three that lie in $\R^{m \times m \times m}$, i.e., tensors indexed by three numbers, for notational simplicity, although our discussion is applicable more generally.  In particular the atomic set $\A$ is the set of unit-Euclidean-norm rank-one tensors:
\begin{eqnarray*}
\A &=& \{\bu \otimes \bv \otimes \bw : \bu, \bv, \bw \in \R^m, ~ \|\bu\| = \|\bv\| = \|\bw\| = 1\} \\ &=& \{N \in \R^{m^3} : N = \bu \otimes \bv \otimes \bw, ~ \bu, \bv, \bw \in \R^m, ~ \|\bu\| = \|\bv\| = \|\bw\| = 1\},
\end{eqnarray*}
where $\bu \otimes \bv \otimes \bw$ is the tensor product of three vectors.  Note that the second description is written as the projection onto $\R^{m^3}$ of a variety defined in $\R^{m^3+3m}$.  The nuclear norm is then given by \eqref{eq:atnorm}, and is intractable to compute in general.  Now let $I_\A$ denote a polynomial ideal of polynomial maps from $\R^{m^3+3m}$ to $\R$:
\begin{equation*}
I_\A = \{g: g = \sum_{i,j,k=1}^m g_{ijk} (N_{ijk} - \bu_i \bv_j \bw_k)+g_u (\bu^T \bu-1) + g_v (\bv^T \bv-1) + g_w (\bw^T \bw-1), \forall g_{ijk},g_u,g_v,g_w\}.
\end{equation*}
Here $g_u,g_v,g_w,\{g_{ijk}\}_{i,j,k}$ are polynomials in the variables $N,\bu,\bv,\bw$.  Following the program described above for constructing approximations, a family of semidefinite relaxations to the tensor nuclear norm ball can be prescribed in this manner via the theta bodies $\mathrm{TH}_k(I_\A)$.

\subsection{Tradeoff between Relaxation and Number of Measurements}
\label{subsec:tradeoff}

As discussed in Section~\ref{subsec:why} the atomic norm is the best convex heuristic for solving ill-posed linear inverse problems of the type considered in this paper.  However we may wish to approximate the atomic norm in cases when it is intractable to compute exactly, and the discussion in the preceding section provides one approach to constructing a family of relaxations. As one might expect the tradeoff for using such approximations, i.e., a \emph{weaker} convex heuristic than the atomic norm, is an increase in the number of measurements required for exact or robust recovery.  The reason for this is that the approximate norms have \emph{larger} tangent cones at their extreme points, which makes it harder to satisfy the empty intersection condition of Proposition~\ref{prop:null-intersection}.  We highlight this tradeoff here with an illustrative example involving the cut polytope.

The cut polytope is defined as the convex hull of all cut matrices:
\begin{equation*}
\mathcal{P} = \mathrm{conv}\{\bz \bz^T : \bz \in \{-1,+1\}^m\}.
\end{equation*}
As described in Section~\ref{subsec:ex} low-rank matrices that are composed of $\pm 1$'s as entries are of interest in collaborative filtering \cite{SreS2005}, and the norm induced by the cut polytope is a potential convex heuristic for recovering such matrices from limited measurements.  However it is well-known that the cut polytope is intractable to characterize \cite{DezL1997}, and therefore we need to use tractable relaxations instead.  We consider the following two relaxations of the cut polytope.  The first is the popular relaxation that is used in semidefinite approximations of the MAXCUT problem:
\begin{equation*}
\mathcal{P}_1 = \{M : M ~\mathrm{symmetric}, ~ M \succeq 0, ~ M_{ii} = 1,  \forall i = 1,\cdots,p \}.
\end{equation*}
This is the well-studied elliptope \cite{DezL1997}, and can be interpreted as the second theta body relaxation (see Section~\ref{subsec:psatz}) of the cut polytope $\mathcal{P}$ \cite{GouPT2010}.  We also investigate the performance of a second, weaker relaxation:
\begin{equation*}
\mathcal{P}_2 = \{M: M ~\mathrm{symmetric}, ~ M_{ii} = 1, \forall i, ~ |M_{ij}| \leq 1, \forall i \neq j \}.
\end{equation*}
This polytope is simply the convex hull of symmetric matrices with $\pm 1$'s in the off-diagonal entries, and $1$'s on the diagonal.  We note that $\mathcal{P}_2$ is an extremely weak relaxation of $\mathcal{P}$, but we use it here only for illustrative purposes.  It is easily seen that
\begin{equation*}
\mathcal{P} \subset \mathcal{P}_1 \subset \mathcal{P}_2,
\end{equation*}
with all the inclusions being strict.  Figure~\ref{fig:fig3} gives a toy sketch that highlights all the main geometric aspects of these relaxations.  In particular $\mathcal{P}_1$ has many more extreme points that $\mathcal{P}$, although the set of vertices of $\mathcal{P}_1$, i.e., points that have full-dimensional normal cones, are precisely the cut matrices (which are the vertices of $\mathcal{P}$) \cite{DezL1997}.  The convex polytope $\mathcal{P}_2$ contains many more vertices compared to $\mathcal{P}$ as shown in Figure~\ref{fig:fig3}.  As expected the tangent cones at vertices of $\mathcal{P}$ become increasingly larger as we use successively weaker relaxations.  The following result summarizes the number of random measurements required for recovering a cut matrix, i.e., a rank-one sign matrix, using the norms induced by each of these convex bodies.

\begin{figure}
\begin{center}
\epsfig{file=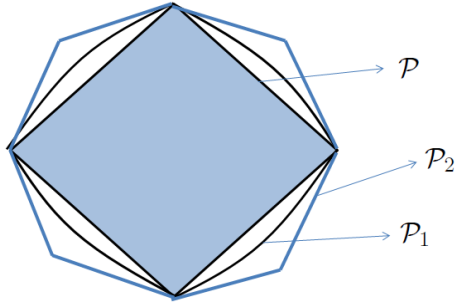,width=6cm,height=4cm} \caption{A toy sketch illustrating the cut polytope $\mathcal{P}$, and the two approximations $\mathcal{P}_1$ and $\mathcal{P}_2$. Note that $\mathcal{P}_1$ is a sketch of the standard semidefinite relaxation that has the \emph{same} vertices as $\mathcal{P}$.  On the other hand $\mathcal{P}_2$ is a polyhedral approximation to $\mathcal{P}$ that has many more vertices as shown in this sketch.} \label{fig:fig3}
\end{center}
\end{figure}

\begin{proposition} \label{prop:cut}
Suppose $\bxs \in \R^{m \times m}$ is a rank-one sign matrix, i.e., a cut matrix, and we are given $n$ random Gaussian measurements of $\bxs$.  We wish to recover $\bxs$ by solving a convex program based on the norms induced by each of $\mathcal{P}, \mathcal{P}_1, \mathcal{P}_2$.  We have exact recovery of $\bxs$ in each of these cases with high probability under the following conditions on the number of measurements:
\begin{enumerate}
\item Using $\mathcal{P}$: $n = \mathcal{O}(m)$.

\item Using $\mathcal{P}_1$: $n = \mathcal{O}(m)$.

\item Using $\mathcal{P}_2$: $n = \tfrac{m^2-m}{4}$.
\end{enumerate}
\end{proposition}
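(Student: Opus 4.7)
The plan is to invoke Corollary~\ref{corl:width} in each case, reducing the problem to bounding the Gaussian width squared of the tangent cone to the relevant convex body at $\bxs$.  All three bodies $\mathcal{P}$, $\mathcal{P}_1$, $\mathcal{P}_2$ are invariant under the linear isometries $M \mapsto \diag(\mathbf{s})\,M\,\diag(\mathbf{s})$ with $\mathbf{s} \in \{\pm 1\}^m$, and this action is transitive on the cut matrices.  By the orthogonal invariance of Gaussian width, we may therefore assume throughout that $\bxs = \one \one^T$.

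Parts 1 and 2 can be handled by a single volumetric argument.  The key observation is that $\mathcal{P}$ and $\mathcal{P}_1$ are vertex-transitive convex bodies with the \emph{same} vertex set, consisting of the $2^{m-1}$ cut matrices $\bz \bz^T$, $\bz \in \{\pm 1\}^m$ (modulo the $\bz \leftrightarrow -\bz$ identification).  For $\mathcal{P}$ this is by definition; for the elliptope $\mathcal{P}_1$ it is a classical fact that an extreme point has a full-dimensional normal cone only if it has rank one, and a rank-one PSD matrix with unit diagonal must be of the form $\bz \bz^T$ with $\bz \in \{\pm 1\}^m$.  At any convex body the normal cones at distinct vertices have disjoint interiors, so by vertex-transitivity each normal cone covers at most $2^{-(m-1)}$ fraction of the sphere.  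Corollary~\ref{corl:symm} applies directly to the polytope $\mathcal{P}$, while Theorem~\ref{theo:angle} applies to $\mathcal{P}_1$ with $\Theta \leq 2^{-(m-1)}$.  In both cases one obtains $w(T_{\A}(\bxs) \cap \Sp^{p-1})^2 \leq 9 \log(4 \cdot 2^{m-1}) = \mathcal{O}(m)$.

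For part 3 the tangent cone can be read off directly from the facet description of $\mathcal{P}_2$.  At $\one \one^T$ the active constraints are $M_{ii} = 1$ for all $i$ and $M_{ij} \leq 1$ for $i \neq j$ (the constraints $M_{ij} \geq -1$ are slack), yielding
\[
T_{\mathcal{P}_2}(\one \one^T) = \{ D \text{ symmetric} : D_{ii} = 0,\ D_{ij} \leq 0 \text{ for } i \neq j \}.
\]
This is, up to sign, the nonnegative orthant inside the $\binom{m}{2}$-dimensional subspace $W$ of symmetric zero-diagonal matrices, and hence is self-dual as a cone in $W$.  Since the Gaussian width of a cone contained in a subspace equals its intrinsic width in that subspace (only the projection of the ambient Gaussian onto $W$ contributes to the support functional), Corollary~\ref{corl:selfdual} applied in $W$ gives $w^2 \leq \tfrac{1}{2}\binom{m}{2} = \tfrac{m^2 - m}{4}$, matching the stated bound.

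The main obstacle, and the one non-elementary ingredient, is the vertex characterization of the elliptope used in part 2.  With that fact in hand, the argument is immediate from symmetry and Theorem~\ref{theo:angle}.  Without it, a direct width computation via Proposition~\ref{prop:dual-width} is considerably more delicate: the normal cone $\{\Lambda \text{ diagonal}\} + \{-Z : Z \succeq 0,\ Z \one = 0\}$ is unwieldy, and the naive projection bound leaks an extra factor of $m$ because a standard symmetric Gaussian matrix sits at Frobenius distance $\Theta(m)$ from the PSD cone.  The volume-based route via Theorem~\ref{theo:angle} cleanly sidesteps this by exploiting that only $2^{m-1}$ vertices share the spherical mass of the normal fan.
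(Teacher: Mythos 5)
Your parts 1 and 3 are correct and essentially reproduce the paper's reasoning. Part 1 is exactly the appeal to Corollary~\ref{corl:symm} for the vertex-transitive polytope $\mathcal{P}$ with $2^{m-1}$ vertices. Part 3 inlines the proof of Proposition~\ref{prop:sign}, which the paper cites instead: the tangent cone at a cut matrix with respect to $\mathcal{P}_2$ is an orthant inside the $\binom{m}{2}$-dimensional subspace of symmetric zero-diagonal matrices, the ambient width equals the intrinsic width in that subspace, and Corollary~\ref{corl:selfdual} gives the bound $\tfrac{1}{2}\binom{m}{2} = \tfrac{m^2-m}{4}$.

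Part 2, however, contains a genuine gap, and it cannot be repaired along the lines you propose. Theorem~\ref{theo:angle} bounds the width by $3\sqrt{\log(4/\Theta)}$, where $\Theta$ is the volume of the polar (normal) cone; this bound \emph{deteriorates} as $\Theta$ shrinks, so what the theorem requires is a \emph{lower} bound on $\Theta$. Your disjointness argument yields only the upper bound $\Theta \leq 2^{-(m-1)}$, which, plugged into a quantity that is decreasing in $\Theta$, bounds nothing. For the polytope $\mathcal{P}$ this issue is invisible because the normal cones at the vertices of a polytope partition $\R^p$, so transitivity pins down $\Theta = 2^{-(m-1)}$ exactly; that is precisely the proof of Corollary~\ref{corl:symm}. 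But for the elliptope $\mathcal{P}_1$ the covering property fails badly: $\mathcal{P}_1$ has many extreme points that are not cut matrices (extreme points of rank up to roughly $\sqrt{2m}$), and with overwhelming probability a Gaussian linear functional is maximized over $\mathcal{P}_1$ at one of these higher-rank extreme points---equivalently, the MAXCUT semidefinite relaxation with a Gaussian objective is with high probability not tight. Hence the normal cones at the $2^{m-1}$ vertices of $\mathcal{P}_1$ cover only a vanishing fraction of the sphere, $\Theta$ is far smaller than $2^{-(m-1)}$, and producing the lower bound on $\Theta$ that Theorem~\ref{theo:angle} actually needs is a delicate random-matrix question your argument never addresses. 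The paper's proof of part 2 sidesteps volumes entirely: every $M \in \mathcal{P}_1$ is positive semidefinite with unit diagonal, so $\|M\|_\ast = \tr(M) = m$; thus $\mathcal{P}_1$ is contained in the nuclear-norm ball of radius $m$, and $\bxs$ lies on the boundary of both bodies. Consequently $T_{\mathcal{P}_1}(\bxs)$ is contained in the tangent cone at $\bxs$ with respect to the nuclear-norm ball, and monotonicity of the Gaussian width together with Proposition~\ref{prop:nuclear} (with $r=1$ and $m_1=m_2=m$) gives $w^2 \leq 3(2m-1) = \mathcal{O}(m)$. You should replace your volumetric argument for $\mathcal{P}_1$ with this containment argument.
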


\begin{proof}
For the first part, we note that $\mathcal{P}$ is a symmetric polytope with $2^{m-1}$ vertices.  Therefore we can apply Corollary~\ref{corl:symm} to conclude that $n = \mathcal{O}(m)$ measurements suffices for exact recovery.

For the second part we note that the tangent cone at $\bxs$ with respect to the nuclear norm ball of $m \times m$ matrices contains within it the tangent cone at $\bxs$ with respect to the polytope $\mathcal{P}_1$. Hence we appeal to Proposition~\ref{prop:nuclear} to conclude that $n = \mathcal{O}(m)$ measurements suffices for exact recovery.

Finally, we note that $\mathcal{P}_2$ is essentially the hypercube in ${m \choose 2}$ dimensions.  Appealing to Proposition~\ref{prop:sign}, we conclude that $n = \tfrac{m^2-m}{4}$ measurements suffices for exact recovery.
\end{proof}

It is not too hard to show that these bounds are order-optimal, and that they cannot be improved.  Thus we have a rigorous demonstration in this particular instance of the fact that the number of measurements required for exact recovery increases as the relaxations get weaker (and as the tangent cones get larger).  The principle underlying this illustration holds more generally, namely that there exists a tradeoff between the complexity of the convex heuristic and the number of measurements required for exact or robust recovery.  It would be of interest to quantify this tradeoff in other settings, for example, in problems in which we use increasingly tighter relaxations of the atomic norm via theta bodies.

We also note that the tractable relaxation based on $\mathcal{P}_1$ is only off by a constant factor with respect to the optimal heuristic based on the cut polytope $\mathcal{P}$.  This suggests the potential for tractable heuristics to approximate hard atomic norms with provable approximation ratios, akin to methods developed in the literature on approximation algorithms for hard combinatorial optimization problems.

\subsection{Terracini's Lemma and Lower Bounds on Recovery}
\label{subsec:terracini}


Algebraic structure in the atomic set $\A$ also provides a means for computing \emph{lower bounds} on the number of measurements required for exact recovery.  The recovery condition of Proposition~\ref{prop:null-intersection} states that the nullspace $\n(\Phi)$ of the measurement operator $\Phi: \R^p \rightarrow \R^n$ must miss the tangent cone $T_\A(\bxs)$ at the point of interest $\bxs$.  Suppose that this tangent cone contains a $q$-dimensional subspace.  It is then clear from straightforward linear algebra arguments that the number of measurements $n$ must exceed $q$.  Indeed this bound must hold for \emph{any} linear measurement scheme.  Thus the dimension of the subspace contained inside the tangent cone (i.e., the dimension of the lineality space) provides a simple lower bound on the number of linear measurements.

In this section we discuss a method to obtain estimates of the dimension of a subspace component of the tangent cone.  We focus again on the setting in which $\A$ is an algebraic variety.  Indeed in all of the examples of Section~\ref{subsec:ex}, the atomic set $\A$ is an algebraic variety.  In such cases simple models $\bxs$ formed according to \eqref{eq:simp1} can be viewed as elements of \emph{secant varieties}.

\begin{definition}
Let $\A \in \R^p$ be an algebraic variety.  Then the $k$'th \emph{secant variety} $\A^k$ is defined as the union of all affine spaces passing through any $k+1$ points of $\A$.
\end{definition}

Secant varieties and their tangent spaces have been extensively studied in algebraic geometry \cite{Har95}.  A particular question of interest is to characterize the dimensions of secant varieties and tangent spaces.  In our context, estimates of these dimensions are useful in giving lower bounds on the number of measurements required for recovery.  Specifically we have the following result, which states that certain linear spaces must lie in the tangent cone at $\bxs$ with respect to $\ch(\A)$:

\begin{proposition} \label{prop:tspace}
Let $\A \subset \R^p$ be a smooth variety, and let $\mathcal{T}(\bu,\A)$ denote the tangent space at any $\bu \in \A$ with respect to $\A$.  Suppose $\bx = \sum_{i=1}^k c_i \ba_i, ~ \forall \ba_i \in \A, c_i \geq 0$, such that
\begin{equation*}
\|\bx\|_\A = \sum_{i=1}^k c_i.
\end{equation*}
Then the \emph{tangent cone} $T_\A(\bxs)$ contains the following linear space:
\begin{equation*}
\mathcal{T}(\ba_1,\A) \oplus \cdots \oplus \mathcal{T}(\ba_k,\A) \subset T_\A(\bxs),
\end{equation*}
where $\oplus$ denotes the direct sum of subspaces.
\end{proposition}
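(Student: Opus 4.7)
The plan is to realize each vector in $\mathcal{T}(\ba_1,\A) \oplus \cdots \oplus \mathcal{T}(\ba_k,\A)$ as the velocity at $t=0$ of a smooth curve through $\bxs$ that stays inside the sublevel set $\{\bz: \|\bz\|_\A \leq \|\bxs\|_\A\}$. The hypothesis $\|\bxs\|_\A = \sum_i c_i$ is what makes the construction ``tight'' enough for first-order perturbations to remain feasible.

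First I would fix $i$ and a vector $\bv_i \in \mathcal{T}(\ba_i, \A)$. Since $\A$ is a smooth variety, there exists a smooth curve $\gamma_i: (-\epsilon,\epsilon) \to \A$ with $\gamma_i(0) = \ba_i$ and $\gamma_i'(0) = \bv_i$. Define the perturbation
\begin{equation*}
\bz_i(t) = c_i \gamma_i(t) + \sum_{j \neq i} c_j \ba_j.
\end{equation*}
Because every element of $\A$ lies in $\ch(\A)$, it satisfies $\|\ba\|_\A \leq 1$. Applying the triangle inequality together with the hypothesis on $\bxs$ gives
\begin{equation*}
\|\bz_i(t)\|_\A \;\leq\; c_i \|\gamma_i(t)\|_\A + \sum_{j\neq i} c_j \|\ba_j\|_\A \;\leq\; \sum_j c_j \;=\; \|\bxs\|_\A.
\end{equation*}

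Next I would extract the tangent vector. By the definition of $T_\A(\bxs)$ as the conic hull of $\{\bz - \bxs : \|\bz\|_\A \leq \|\bxs\|_\A\}$, we have $\bz_i(t) - \bxs \in T_\A(\bxs)$ for every small $t>0$. Dividing by $t$ and letting $t \to 0^+$, and using that $T_\A(\bxs)$ (or its closure) contains limits of its scaled elements, yields $c_i \bv_i \in T_\A(\bxs)$. Since $c_i > 0$ (we can assume the representation is reduced) and $\mathcal{T}(\ba_i,\A)$ is a linear subspace, the positive rescaling is harmless: $\bv_i \in T_\A(\bxs)$. Because this holds for every $i$ and $T_\A(\bxs)$ is a convex cone, it is closed under addition, so $\sum_i \bv_i \in T_\A(\bxs)$. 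Varying the $\bv_i$ independently gives the desired containment of the entire sum of tangent spaces.

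The main subtlety is Step~2, where one must verify that the perturbed convex combination stays inside the scaled atomic-norm ball; this is precisely where the ``efficient decomposition'' hypothesis $\|\bxs\|_\A = \sum_i c_i$ is indispensable. Without it, $\sum_i c_i$ could strictly exceed $\|\bxs\|_\A$ and the bound $\|\bz_i(t)\|_\A \leq \|\bxs\|_\A$ would fail, so one could not conclude that $\bz_i(t) - \bxs$ is a descent direction. A secondary technical point is the passage to the limit $t \to 0^+$, which uses standard closedness properties of tangent cones to convex bodies; smoothness of $\A$ ensures that the curves $\gamma_i$ exist and produce a well-defined velocity $\bv_i$.
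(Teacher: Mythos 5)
Your proof is correct and takes essentially the same approach as the paper: the paper's entire proof is the single observation that perturbing one atom $\ba_1$ to any nearby $\ba_1' \in \A$ yields $\bx' = c_1 \ba_1' + \sum_{i \geq 2} c_i \ba_i$ with $\|\bx'\|_\A \leq \|\bx\|_\A$, which is precisely your Step 2. Your curve construction, the limiting argument (including the closure caveat, which the paper silently glosses over), and the summation across atoms via convexity of the cone are just the rigorous fleshing-out of that one-line observation.
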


\begin{proof}
We note that if we perturb $\ba_1$ slightly to \emph{any} neighboring $\ba_1'$ so that $\ba_1' \in \A$, then the resulting $\bx' = c_1 \ba_1' + \sum_{i = 2}^k c_2 \ba_i$ is such that $\|\bx'\|_\A \leq \|\bx\|_\A$.  The proposition follows directly from this observation.
\end{proof}

This result is applicable, for example, when $\A$ is the variety of rank-one matrices or the variety of rank-one tensors as these are smooth varieties.  By Terracini's lemma \cite{Har95} from algebraic geometry the subspace $\mathcal{T}(\ba_1,\A) \oplus \cdots \oplus \mathcal{T}(\ba_k,\A)$ is in fact the estimate for the tangent space $\mathcal{T}(\bx,\A^{k-1})$ at $\bx$ with respect to the $(k-1)$'th secant variety $\A^{k-1}$:

\begin{proposition}[Terracini's Lemma] \label{prop:terracini}
Let $\A \subset \R^p$ be a smooth affine variety, and let $\mathcal{T}(\bu,\A)$ denote the tangent space at any $\bu \in \A$ with respect to $\A$.  Suppose $\bx \in \A^{k-1}$ is a generic point such that $\bx = \sum_{i=1}^k c_i \ba_i, ~ \forall \ba_i \in \A, c_i \geq 0$.  Then the tangent space $\mathcal{T}(\bx,\A^{k-1})$ at $\bx$ with respect to the secant variety $\A^{k-1}$ is given by $\mathcal{T}(\ba_1,\A) \oplus \cdots \oplus \mathcal{T}(\ba_k,\A)$.  Moreover the dimension of $\mathcal{T}(\bx,\A^{k-1})$ is at most (and is expected to be) $\min\{p, (k+1)\mathrm{dim}(\A) + k\}$.
\end{proposition}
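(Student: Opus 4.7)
The plan is to prove Terracini's lemma by parametrizing the $(k-1)$-th secant variety and then computing the differential of this parametrization at a generic preimage of $\bx$.

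First I would introduce the smooth parametrization
\[
\Phi:\A^{\times k}\times \R^k\longrightarrow \R^p,\qquad
(\ba_1,\ldots,\ba_k,c_1,\ldots,c_k)\longmapsto \sum_{i=1}^{k}c_i\,\ba_i,
\]
whose (Zariski closure of the) image is $\A^{k-1}$ by the secant definition. Because $\A$ is smooth the product $\A^{\times k}$ is smooth, so $\Phi$ is a smooth map between smooth spaces away from the diagonals. At any smooth point $\bx$ of $\A^{k-1}$---and a generic $\bx$ is smooth---the submersion theorem identifies $\mathcal{T}(\bx,\A^{k-1})$ with the image of $d\Phi$ at a preimage with distinct $\ba_i$ and nonzero $c_i$, both of which are generic conditions on the preimage.

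Second I would compute the differential at $(\ba_1,\ldots,\ba_k,c_1,\ldots,c_k)$: for any $\dot\ba_i\in\mathcal{T}(\ba_i,\A)$ and $\dot c_i\in\R$,
\[
d\Phi(\dot\ba_1,\ldots,\dot\ba_k,\dot c_1,\ldots,\dot c_k)\;=\;\sum_{i=1}^{k}c_i\,\dot\ba_i\;+\;\sum_{i=1}^{k}\dot c_i\,\ba_i.
\]
Since $c_i\neq 0$ and each $\mathcal{T}(\ba_i,\A)$ is a linear subspace, $c_i\,\mathcal{T}(\ba_i,\A)=\mathcal{T}(\ba_i,\A)$, so the first sum sweeps out $\mathcal{T}(\ba_1,\A)+\cdots+\mathcal{T}(\ba_k,\A)$. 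For the atomic varieties of interest in this paper (rank-one matrices, rank-one tensors, orthogonal matrices, etc.) $\A$ is a smooth cone through the origin, so $\ba_i\in\mathcal{T}(\ba_i,\A)$ and the second sum is already absorbed in the first. Hence the image of $d\Phi$ coincides with $\mathcal{T}(\ba_1,\A)+\cdots+\mathcal{T}(\ba_k,\A)$, giving the asserted equality.

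Third, the dimension bound follows from parameter counting. Each tangent space $\mathcal{T}(\ba_i,\A)$ has dimension $\dim\A$, contributing at most $k\,\dim\A$ to the sum; adjoining the $k$ coefficient directions and the ambient $\dim\A$ of scaling freedom yields the stated upper bound $(k+1)\dim\A+k$, capped by the ambient dimension $p$. For a Zariski-generic tuple $(\ba_1,\ldots,\ba_k)$ the subspaces $\mathcal{T}(\ba_i,\A)$ sit in general position and their sum is direct, saturating the bound; in defective cases---where hidden linear dependencies force the tangent spaces to meet---the actual dimension is strictly smaller, which accounts for the ``expected'' qualifier in the statement.

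The main obstacle is handling ``generic'' rigorously. The secant variety $\A^{k-1}$ is typically singular: its singular locus contains the lower secant $\A^{k-2}$ and the diagonals where some of the $\ba_i$ coincide, and one must verify that $\bx$ lies in the smooth locus of $\A^{k-1}$ before invoking the submersion theorem. A subsidiary delicate point is the cone-versus-general distinction: for a smooth affine variety that is not a cone the terms $\dot c_i\,\ba_i$ are \emph{not} automatically absorbed into $\sum\mathcal{T}(\ba_i,\A)$, and one must add $\mathrm{span}(\ba_1,\ldots,\ba_k)$ to the stated sum; this is precisely the origin of the extra ``$+k$'' in the dimension formula and has to be tracked carefully.
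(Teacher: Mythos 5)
The paper contains no proof of this proposition to compare against: Terracini's Lemma is quoted there as a classical fact from algebraic geometry (cited to Harris) and is only \emph{used} to derive lower bounds on the number of measurements. Judged on its own terms, your proposal is the standard textbook proof, and its skeleton is sound: parametrize the secant by $\Phi(\ba_1,\ldots,\ba_k,c_1,\ldots,c_k)=\sum_i c_i\ba_i$, compute $d\Phi=\sum_i c_i\dot\ba_i+\sum_i \dot c_i\ba_i$, identify the tangent space at a generic point of the secant with the image of $d\Phi$ at a generic preimage, and observe that this image is $\mathcal{T}(\ba_1,\A)+\cdots+\mathcal{T}(\ba_k,\A)+\mathrm{span}(\ba_1,\ldots,\ba_k)$, with the span absorbed into the tangent spaces when $\A$ is a cone --- which covers the atomic varieties (rank-one matrices, rank-one tensors) the paper actually invokes this lemma for. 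Your closing caveats about the singular locus of the secant and the cone-versus-general distinction are exactly the right ones.

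Two steps need repair. First, the identification of $\mathcal{T}(\bx,\A^{k-1})$ with the image of $d\Phi$ is not the ``submersion theorem'': $\Phi$ is typically \emph{not} a submersion onto $\R^p$, and calling it a submersion onto its image is meaningless until you know the image is a manifold near $\bx$ --- which is part of what must be proved. The correct tool is generic smoothness (the algebraic Sard theorem, valid in characteristic zero), or over $\R$ a constant-rank argument: the rank of $d\Phi$ is maximal on a dense open subset of the domain, the constant-rank theorem makes the image locally a manifold there with tangent space equal to the image of $d\Phi$, and genericity of $\bx$ places it in the image of this locus. As written, this step comes close to assuming the lemma. Second, your dimension count is fudged: your parametrization has domain of dimension $k\,\mathrm{dim}(\A)+k$, so it yields the bound $\min\{p,\,k\,\mathrm{dim}(\A)+k\}$; there is no ``ambient $\mathrm{dim}(\A)$ of scaling freedom'' to adjoin, and inventing one to hit the stated formula is not a derivation. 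Fortunately the bound you can honestly prove is \emph{stronger} than the one claimed --- the formula $(k+1)\mathrm{dim}(\A)+k$ in the proposition is the expected dimension of the span of $k+1$ points, i.e.\ of $\A^{k}$ in the paper's indexing --- so the claimed inequality follows a fortiori. Relatedly, the assertion that for a Zariski-generic tuple the tangent spaces are in general position and their sum is direct is false as a blanket claim: defective varieties (secants of Segre varieties, directly relevant to the tensor examples here) furnish counterexamples at generic tuples. Keep only the ``expected dimension'' reading, which your final sentence already adopts.
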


Combining these results we have that estimates of the dimension of the tangent space $\mathcal{T}(\bx,\A^{k-1})$ lead directly to lower bounds on the number of measurements required for recovery.  The intuition here is clear as the number of measurements required must be bounded below by the number of ``degrees of freedom,'' which is captured by the dimension of the tangent space $\mathcal{T}(\bx,\A^{k-1})$.  However Terracini's lemma provides us with general estimates of the dimension of $\mathcal{T}(\bx,\A^{k-1})$ for generic points $\bx$.  Therefore we can directly obtain lower bounds on the number of measurements, purely by considering the dimension of the variety $\A$ and the number of elements from $\A$ used to construct $\bx$ (i.e., the order of the secant variety in which $\bx$ lies).  As an example the dimension of the base variety of normalized order-three tensors in $\R^{m \times m \times m}$ is $3(m-1)$.  Consequently if we were to in principle solve the tensor nuclear norm minimization problem, we should expect to require at least $\mathcal{O}(km)$ measurements to recover a rank-$k$ tensor.


\section{Computational Experiments}
\label{sec:comp}

\subsection{Algorithmic Considerations}
\label{subsec:algo}

While a variety of atomic norms can be represented or approximated by linear matrix inequalities, these representations do not necessarily translate into practical implementations.  Semidefinite programming can be technically solved in polynomial time, but general interior point solvers typically only scale to problems with a few hundred variables.  For larger scale problems, it is often preferable to exploit structure in the atomic set $\A$ to develop fast, first-order algorithms.

A starting point for first-order algorithm design lies in determining the structure of the proximity operator (or Moreau envelope) associated with the atomic norm,
\begin{equation}
	\Pi_\A(\bx;\mu): = \arg \min_\bz \tfrac{1}{2} \|\bz-\bx\|^2 + \mu\|\bz\|_{\A}\,.
\end{equation}
Here $\mu$ is some positive parameter.  Proximity operators have already been harnessed for fast algorithms involving the $\ell_1$ norm \cite{FigN2003,ComW2005,DauDD2004,HalYZ2008,WriNF2009} and the nuclear norm \cite{MaGC2008,CaiCS2008,TohY2009} where these maps can be quickly computed in closed form.  For the $\ell_1$ norm, the $i$th component of $\Pi_{\A}(\bx;\mu)$ is given by
\begin{equation}
\Pi_{\A}(\bx;\mu)_i = \begin{cases} \bx_i+\mu & \bx_i<-\mu\\
					0 & -\mu \leq \bx_i \leq \mu\\
					\bx_i-\mu & \bx_i>\mu
					\end{cases}\,.
\end{equation}
This is the so-called \emph{soft thresholding} operator.  For the nuclear norm, $\Pi_{\A}$ soft thresholds the singular values.  In either case, the only structure necessary for the cited algorithms to converge is the convexity of the norm. Indeed, essentially any algorithm developed for $\ell_1$ or nuclear norm minimization can in principle be adapted for atomic norm minimization.  One simply needs to apply the operator $\Pi_{\A}$ wherever a shrinkage operation was previously applied.

For a concrete example, suppose $f$ is a smooth function, and consider the optimization problem
\begin{equation}\label{eq:smooth-reg}
	\min_\bx~f(\bx)+\mu\|\bx\|_{\A}\,.
\end{equation}
The classical projected gradient method for this problem alternates between taking steps along the gradient of $f$ and then applying the proximity operator associated with the atomic norm.  Explicitly, the algorithm consists of the iterative procedure
\begin{equation}
	\bx_{k+1} = \Pi_{\A}( \bx_k - \alpha_k \nabla f(\bx_k); \alpha_k\lambda)
\end{equation}
where $\{\alpha_k\}$ is a sequence of positive stepsizes.  Under very mild assumptions, this iteration can be shown to converge to a stationary point of~\eqref{eq:smooth-reg}~\cite{FukM1981}.  When $f$ is convex, the returned stationary point is a globally optimal solution.  Recently, Nesterov has described a particular variant of this algorithm that is guaranteed to converge at a rate no worse than $O(k^{-1})$, where $k$ is the iteration counter~\cite{Nes2007}.  Moreover, he proposes simple enhancements of the standard iteration to achieve an $O(k^{-2})$ convergence rate for convex $f$ and a linear rate of convergence for strongly convex $f$.

If we apply the projected gradient method to the regularized inverse problem
\begin{equation}\label{eq:general-lasso}
	\min_\bx ~	\|\Phi \bx - \by\|^2 + \lambda \|\bx\|_{\A}
\end{equation}
then the algorithm reduces to the straightforward iteration
\begin{equation}
	\bx_{k+1} = \Pi_{\A}( \bx_k + \alpha_k \Phi^\dag(\by-\Phi \bx_k); \alpha_k\lambda)\,.
\end{equation}
Here \eqref{eq:general-lasso} is equivalent to~\eqref{eq:noisy-atomic-norm-primal} for an appropriately chosen $\lambda>0$ and is useful for estimation from noisy measurements.

The basic (noiseless) atomic norm minimization problem~\eqref{eq:atomic-norm-primal} can be solved by minimizing a sequence of instances of~\eqref{eq:general-lasso} with monotonically decreasing values of $\lambda$.  Each subsequent minimization is initialized from the point returned by the previous step.  Such an approach corresponds to the classic Method of Multipliers~\cite{Ber1996} and has proven effective for solving problems regularized by the $\ell_1$ norm and for total variation denoising~\cite{YinODG2007,CaiOS2008}.

This discussion demonstrates that when the proximity operator associated with some atomic set $\A$ can be easily computed, then efficient first-order algorithms are immediate.  For novel atomic norm applications, one can thus focus on algorithms and techniques to compute the associated proximity operators.   We note that, from a computational perspective,  it may be easier to compute the proximity operator via dual atomic norm.  Associated to each proximity operator is the dual operator
\begin{equation}\label{eq:dual-shrinkage}
	\Lambda_{\A}(\bx;\mu) = \arg \min_\by \tfrac{1}{2} \|\by-\bx\|^2 ~ \mathrm{s.t.}~ \|\by\|_{\A}^\ast\leq \mu
\end{equation}
By an appropriate change of variables, $\Lambda_{\A}$ is nothing more than the projection of $\mu^{-1}\bx$ onto the unit ball in the dual atomic norm:
\begin{equation}
	\Lambda_{\A}(\bx;\mu) = \arg \min_\by \tfrac{1}{2} \|\by-\mu^{-1} \bx\|^2 ~ \mathrm{s.t.}~ \|\by\|_{\A}^\ast\leq 1
\end{equation}

From convex programming duality, we have $\bx = \Pi_\A(\bx;\mu)+\Lambda_{\A}(\bx;\mu)$.  This can be seen by observing
\begin{align}
	\min_\bz  \tfrac{1}{2} \|\bz-\bx\|^2 + \mu\|\bz\|_{\A} &= 	 \min_\bz \max_{\|\by\|_{\A}^\ast\leq \mu} \tfrac{1}{2} \|\bz-\bx\|^2 + \langle \by, \bz\rangle\\
	&= \max_{\|\by\|_{\A}^\ast\leq \mu} 	\min_\bz  \tfrac{1}{2} \|\bz-\bx\|^2 + \langle \by, \bz\rangle\\
	&= \max_{\|\by\|_{\A}^\ast\leq \mu} 	-\tfrac{1}{2} \|\by-\bx\|^2 + \tfrac{1}{2}\|\bx\|^2
\end{align}
In particular, $\Pi_{\A}(\bx;\mu)$ and $\Lambda_{\A}(\bx;\mu)$ form a complementary primal-dual pair for this optimization problem.  Hence, we only need to able to efficiently compute the Euclidean projection onto the dual norm ball to compute the proximity operator associated with the atomic norm.

Finally, though the proximity operator provides an elegant framework for algorithm generation, there are many other possible algorithmic approaches that may be employed to take advantage of the particular structure of an atomic set $\A$.  For instance, we can rewrite~\eqref{eq:dual-shrinkage} as
\begin{equation}\label{eq:dual-shrinkage-explicit}
	\Lambda_{\A}(\bx;\mu) = \arg \min_\by \tfrac{1}{2} \|\by-\mu^{-1} \bx\|^2 ~~ \mathrm{s.t.}~~ \langle \by,\ba \rangle \leq 1~~\forall \ba\in \A
\end{equation}
Suppose we have access to a procedure that, given  $\bz\in\R^n$, can decide whether $\langle \bz,\ba\rangle\leq 1$ for all $\ba\in \A$, or can find a violated constraint where $\langle \bz, \hat{\ba} \rangle > 1$.  In this case, we can apply a cutting plane method or ellipsoid method to solve~\eqref{eq:dual-shrinkage} or~\eqref{eq:atomic-norm-dual}~\cite{Nes2004,Pol1997}.  Similarly, if it is simpler to compute a subgradient of the atomic norm than it is to compute a proximity operator, then the standard subgradient method~\cite{BerNO2003,Nes2004} can be applied to solve problems of the form~\eqref{eq:general-lasso}.  Each computational scheme will have different advantages and drawbacks for specific atomic sets, and relative effectiveness needs to be evaluated on a case-by-case basis.

\subsection{Simulation Results}
\label{subsec:sims}

We describe the results of numerical experiments in recovering orthogonal matrices, permutation matrices, and rank-one sign matrices (i.e., cut matrices) from random linear measurements by solving convex optimization problems.  All the atomic norm minimization problems in these experiments are solved using a combination of the SDPT3 package \cite{TohTT} and the YALMIP parser \cite{Lof2004}.

\textbf{Orthogonal matrices.} We consider the recovery of $20 \times 20$ orthogonal matrices from random Gaussian measurements via \emph{spectral norm minimization}.  Specifically we solve the convex program \eqref{eq:atomic-norm-primal}, with the atomic norm being the spectral norm.  Figure~\ref{fig:res} gives a plot of the probability of exact recovery (computed over $50$ random trials) versus the number of measurements required.

\textbf{Permutation matrices.}  We consider the recovery of $20 \times 20$ permutation matrices from random Gaussian measurements.  We solve the convex program \eqref{eq:atomic-norm-primal}, with the atomic norm being the norm induced by the Birkhoff polytope of $20 \times 20$ doubly stochastic matrices.  Figure~\ref{fig:res} gives a plot of the probability of exact recovery (computed over $50$ random trials) versus the number of measurements required.

\textbf{Cut matrices.}  We consider the recovery of $20 \times 20$ cut matrices from random Gaussian measurements.  As the cut polytope is intractable to characterize, we solve the convex program \eqref{eq:atomic-norm-primal} with the atomic norm being approximated by the norm induced by the semidefinite relaxation $\mathcal{P}_1$ described in Section~\ref{subsec:tradeoff}.  Recall that this is the second theta body associated with the convex hull of cut matrices, and so this experiment verifies that objects can be recovered from theta-body approximations.  Figure~\ref{fig:res} gives a plot of the probability of exact recovery (computed over $50$ random trials) versus the number of measurements required.

\begin{figure}
\begin{center}
\epsfig{file=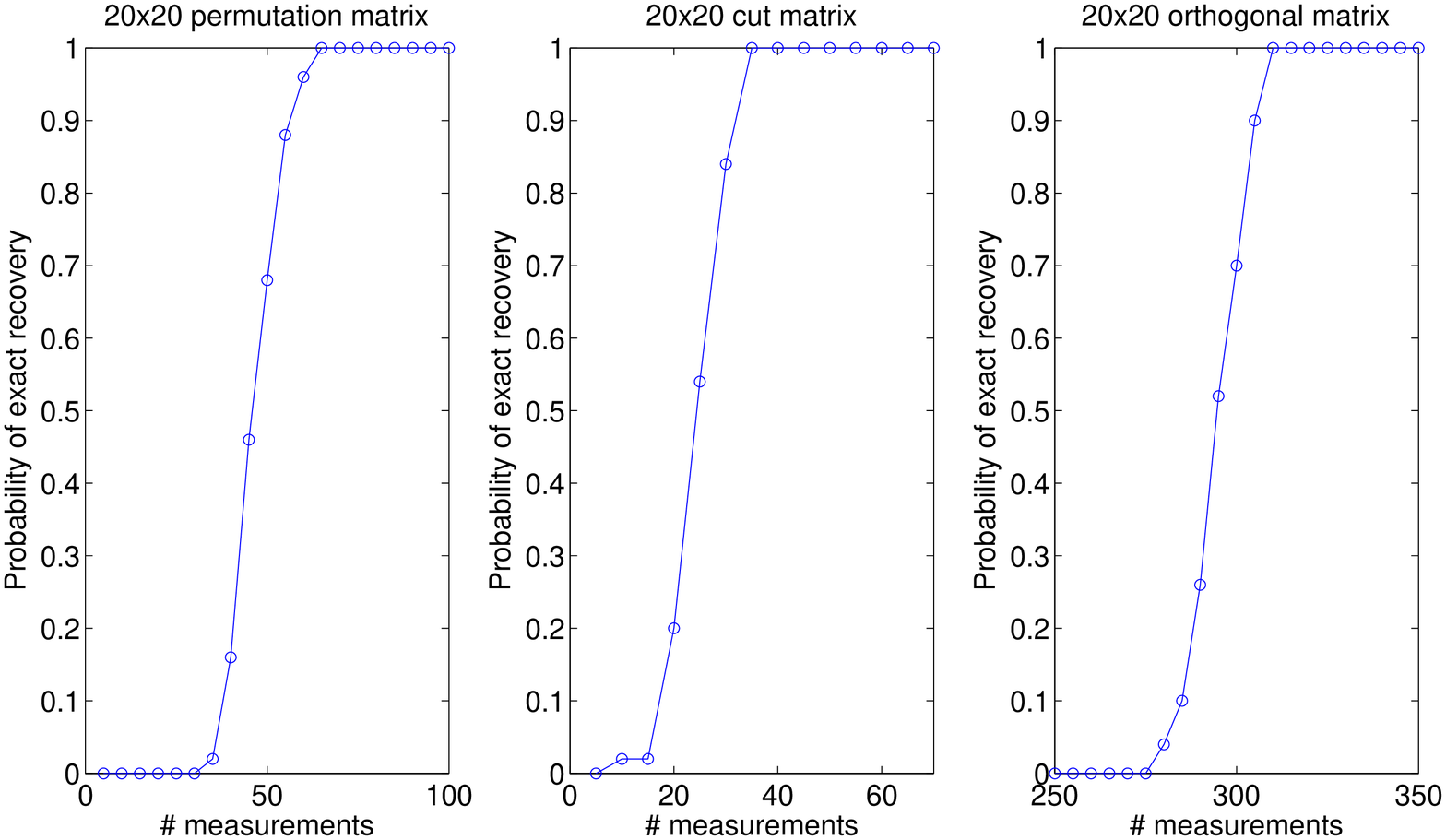,width=14cm,height=6cm} \caption{Plots of the number of measurements available versus the probability of exact recovery (computed over $50$ trials) for various models.} \label{fig:res}
\end{center}
\end{figure}

In each of these experiments we see agreement between the observed phase transitions, and the theoretical predictions (Propositions \ref{prop:ortho}, \ref{prop:birkhoff}, and \ref{prop:cut}) of the number of measurements required for exact recovery.  In particular note that the phase transition in Figure~\ref{fig:res} for the number of measurements required for recovering an orthogonal matrix is very close to the prediction $n \approx \tfrac{3m^2-m}{4} = 295$ of Proposition~\ref{prop:ortho}.  We refer the reader to \cite{DonT2005,RecFP2010,ManR2009} for similar phase transition plots for recovering sparse vectors, low-rank matrices, and signed vectors from random measurements via convex optimization.


\section{Conclusions and Future Directions}
\label{sec:conc}

This manuscript has illustrated that for a fixed set of base atoms, the atomic norm is the best choice of a convex regularizer for solving ill-posed inverse problems with the prescribed priors.  With this in mind, our results in Section~\ref{sec:gaussian} and Section~\ref{sec:rep} outline methods for computing hard limits on the number of measurements required for recovery from \emph{any} convex heuristic.  Using the calculus of Gaussian widths, such bounds can be computed in a relatively straightforward fashion, especially if one can appeal to notions of convex duality and symmetry.  This computational machinery of widths and dimension counting is surprisingly powerful: near-optimal bounds on estimating sparse vectors and low-rank matrices from partial information follow from elementary integration.  Thus we expect that our new bounds concerning symmetric, vertex-transitive polytopes are also nearly tight.  Moreover, algebraic reasoning allowed us to explore the inherent trade-offs between computational efficiency and measurement demands. More complicated algorithms for atomic norm regularization might extract structure from less information, but approximation algorithms are often sufficient for near optimal reconstructions.

This report serves as a foundation for many new exciting directions in inverse problems, and we close our discussion with a description of several natural possibilities for future work:

\paragraph{Width calculations for more atomic sets.}  The calculus of Gaussian widths described in Section~\ref{sec:gaussian} provides the building blocks for computing the Gaussian widths for the application examples discussed in Section~\ref{sec:def}.  We have not yet exhaustively estimated the widths in all of these examples, and a thorough cataloging of the measurement demands associated with different prior information would provide a more complete understanding of the fundamental limits of solving underdetermined inverse problems.  Moreover, our list of examples is by no means exhaustive.  The framework developed in this paper provides a compact and efficient methodology for constructing regularizers from very general prior information, and new regularizers can be easily created by translating grounded expert knowledge into new atomic norms.

\paragraph{Recovery bounds for structured measurements.}  Our recovery results focus on generic measurements because, for a general set $\A$, it does not make sense to delve into specific measurement ensembles.   Particular structures of the measurement matrix $\Phi$ will depend on the application and the atomic set $\A$.  For instance, in compressed sensing, much work focuses on randomly sampled Fourier coefficients~\cite{CanRT2006} and random Toeplitz and circulant matrices~\cite{HauBRN2008,Rau2009}.  With low-rank matrices, several authors have investigated reconstruction  from a small collection of entries~\cite{CanR2009}.  In all of these cases, some notion of \emph{incoherence} plays a crucial role, quantifying the amount of information garnered from each row of $\Phi$. It would be interesting to explore how to appropriately generalize notions of incoherence to new applications.  Is there a particular definition that is general enough to encompass most applications?  Or do we need a specialized concept to match the specifics of each atomic norm?

\paragraph{Quantifying the loss due to relaxation.} Section~\ref{subsec:tradeoff} illustrates how the choice of approximation of a particular atomic norm can dramatically alter the number of measurements required for recovery.  However, as was the case for vertices of the cut polytope, some relaxations incur only a very modest increase in measurement demands.  Using techniques similar to those employed in the study of semidefinite relaxations of hard combinatorial problems, is it possible to provide a more systematic method to estimate the number of measurements required to recover points from polynomial-time computable norms?

\paragraph{Atomic norm decompositions.}  While the techniques of Section~\ref{sec:gaussian} and Section~\ref{sec:rep} provide bounds on the estimation of points in low-dimensional secant varieties of atomic sets, they do not provide a procedure for actually constructing decompositions.  That is, we have provided bounds on the number of measurements required to recover points $\bx$ of the form
\begin{equation*}
	\bx=\sum_{\ba\in\A} c_\ba \ba
\end{equation*}
when the coefficient sequence $\{c_\ba\}$ is sparse, but we do not provide any methods for actually recovering $c$ itself.  These decompositions are useful, for instance, in actually computing the rank-one binary vectors optimized in semidefinite relaxations of combinatorial algorithms~\cite{GoeW1995,Nes1997,AloN2006}, or in the computation of tensor decompositions from incomplete data~\cite{KolB2009}.  Is it possible to use algebraic structure to generate deterministic or randomized algorithms for reconstructing the atoms that underlie a vector $\bx$, especially when approximate norms are used?

\paragraph{Large-scale algorithms.} Finally, we think that the most fruitful extensions of this work lie in a thorough exploration of the empirical performance and efficacy of atomic norms on large-scale inverse problems.  The proposed algorithms in Section~\ref{sec:comp} require only the knowledge of the proximity operator of an atomic norm, or a Euclidean projection operator onto the dual norm ball.  Using these design principles and the geometry of particular atomic norms should enable the scaling of atomic norm techniques to massive data sets.


\section*{Acknowledgements}

We would like to gratefully acknowledge Holger Rauhut for several suggestions on how to improve the presentation in Section 3. We would also like to thank Santosh Vempala, Joel Tropp, Bill Helton, and Jonathan Kelner for helpful discussions.


\appendix

\section{Proof of Proposition~\ref{prop:dual-width}}
\label{app:dual-width}

\begin{proof}
First note that the Gaussian width can be upper-bounded as follows:
\begin{equation}\label{eq:gwidth}
w(\mathcal{C} \cap \Sp^{p-1}) \leq \E_\bg\left[ \sup_{\bz \in \mathcal{C}\cap \mathcal{B}(0,1)} \bg^T \bz \right],
\end{equation}
where $\mathcal{B}(0,1)$ denotes the unit Euclidean ball.
The expression on the right hand side inside the expected value can be expressed as the optimal value of the following convex optimization problem for each $\bg \in \R^p$:
\begin{equation}\label{eq:primal}
\begin{array}{ll}
\max_{\bz} & \bg^T \bz\\
\mathrm{s.t.} & \bz \in \mathcal{C}\\
& \|\bz\|^2\leq 1
\end{array}
\end{equation}
We now proceed to form the dual problem of \eqref{eq:primal} by first  introducing the Lagrangian
\begin{equation*}
\mathcal{L}(\bz,\bu,\gamma) = \bg^T \bz + \gamma (1-\bz^T \bz) - \bu^T \bz
\end{equation*}
where $\bu \in \mathcal{C}^\ast$ and $\gamma \geq 0$ is a scalar.  To obtain the dual problem we maximize the Lagrangian with respect to $\bz$, which amounts to setting
\begin{equation*}
\bz = \frac{1}{2\gamma} (\bg-\bu).
\end{equation*}
Plugging this into the Lagrangian above gives the dual problem
\begin{equation*}
\begin{array}{ll}
\min & \gamma + \frac{1}{4\gamma} \|\bg-\bu\|^2\\
\mathrm{s.t.} & \bu \in \mathcal{C}^\ast\\
& \gamma \geq 0.
\end{array}
\end{equation*}
Solving this optimization problem with respect to $\gamma$ we find that $\gamma = \frac{1}{2} \|\bg-\bu\|$, which gives the dual problem to \eqref{eq:primal}
\begin{equation}\label{eq:dual}
\begin{array}{ll}
\min & \|\bg-\bu\|\\
\mathrm{s.t.} & \bu \in \mathcal{C}^\ast
\end{array}
\end{equation}
Under very mild assumptions about $\mathcal{C}$, the optimal value of \eqref{eq:dual} is equal to that of \eqref{eq:primal} (for example as long as $\mathcal{C}$ has a non-empty relative interior, strong duality holds).  Hence we have derived
\begin{equation}\label{eq:dual-width}
\E_\bg \left[ \sup_{\bz\in \mathcal{C}\cap \mathcal{B}(0,1)} \bg^T \bz\right] =\E_\bg \left[\mathrm{dist}(\bg,\mathcal{C}^\ast)\right].
\end{equation}
This equation combined with the bound \eqref{eq:gwidth} gives us the desired result.
\end{proof}

\section{Proof of Theorem~\ref{theo:angle}}
\label{app:width-angle}

\begin{proof}
We set $\beta = \tfrac{1}{\Theta}$.  First note that if $\beta \geq \tfrac{1}{4} \exp\{\tfrac{p}{9}\}$ then the width bound exceeds $\sqrt{p}$, which is the maximal possible value for the width of $\mathcal{C}$.  Thus, we will assume throughout that  $\beta \leq \tfrac{1}{4} \exp\{\tfrac{p}{9}\}$.

Using Proposition~\ref{prop:dual-width} we need to upper bound the expected distance to the polar cone.  Let $\bg \sim \mathcal{N}(0,I)$ be a normally distributed random vector.  Then the norm of $\bg$ is independent from the angle of $\bg$.  That is, $\|\bg\|$ is independent from $\bg/\|\bg\|$.   Moreover, $\bg/\|\bg\|$ is distributed as a uniform sample on $\Sp^{p-1}$, and $\E_\bg[\|\bg\|]\leq \sqrt{p}$.  Thus we have
\begin{equation}
	\E_\bg[\mathrm{dist}(\bg,\mathcal{C}^\ast)] \leq \E_\bg[\|\bg\|\cdot \mathrm{dist}(\bg/\|\bg\|, \mathcal{C}^\ast \cap \Sp^{p-1})]\leq \sqrt{p}\E_\bu[ \mathrm{dist}(\bu, \mathcal{C}^\ast \cap \Sp^{p-1})]
\end{equation}
where $\bu$ is sampled uniformly on $\Sp^{p-1}$.

To bound the latter quantity, we will use isoperimetry.  Suppose $A$ is a subset of $\Sp^{p-1}$ and $B$ is a spherical cap with the same volume as $A$.  Let $N(A,r)$ denote the locus of all points in the sphere of Euclidean distance at most $r$ from the set $A$.  Let $\mu$ denote the Haar measure on $\Sp^{p-1}$ and $\mu(A;r)$ denote the measure of $N(A,r)$.  Then spherical isoperimetry states that $\mu(A;r)\geq \mu(B;r)$ for all $r\geq 0$ (see, for example \cite{Led2000,Mat2002}).

Let $B$ now denote a spherical cap with $\mu(B)=\mu(\mathcal{C}^\ast\cap \Sp^{p-1})$.  Then we have
\begin{align}
	\E_\bu[ \mathrm{dist}(\bu, \mathcal{C}^\ast \cap \Sp^{p-1})] &= \int_0^\infty \P [ \mathrm{dist}(\bu, \mathcal{C}^\ast \cap \Sp^{p-1})>t]dt\\
	&= \int_0^\infty (1-\mu(\mathcal{C}^\ast\cap \Sp^{p-1};t))dt\\
	&\leq \int_0^\infty (1-\mu(B;t))dt
\end{align}
where the first equality is the integral form of the expected value and the last inequality follows by isoperimetry. Hence we can bound the expected distance to the polar cone intersecting the sphere using only knowledge of the volume of spherical caps on $\Sp^{p-1}$.

To proceed let $v(\varphi)$ denote the volume of a spherical cap subtending a solid angle $\varphi$. An explicit formula for $v(\varphi)$ is
\begin{equation}
	v(\varphi)= z_p^{-1}\int_0^\varphi \sin^{p-1}(\vartheta)d\vartheta
\end{equation}
where $z_p = \int_0^\pi \sin^{p-1}(\vartheta)d\vartheta$ \cite{KlaR1997}.   Let $\varphi(\beta)$ denote the minimal solid angle of a cap such that $\beta$ copies of that cap cover $\Sp^{p-1}$.  Since the geodesic distance on the sphere is always greater than or equal to Euclidean distance, if $K$ is a spherical cap subtending $\psi$ radians, $\mu(K;t)\geq v(\psi+t)$.   Therefore
\begin{equation}\label{eq:cap-bound}
\int_0^\infty (1-\mu(B;t))dt \leq  \int_0^\infty (1-v(\varphi(\beta)+t))dt \,.
\end{equation}
We can proceed to simplify the right-hand-side integral:
\begin{align}
\int_0^\infty (1-v(\varphi(\beta)+t))dt 	&= \int_0^{\pi-\varphi(\beta)} (1-v(\varphi(\beta)+t))dt \\
	&= \pi-\varphi(\beta) - \int_0^{\pi-\varphi(\beta)} v(\varphi(\beta)+t)dt \\
	&= \pi-\varphi(\beta) - z_p^{-1} \int_0^{\pi-\varphi(\beta)} \int_0^{\varphi(\beta)+t}  \sin^{p-1}\vartheta d\vartheta dt \\
\label{eq:int-order} &= \pi-\varphi(\beta) - z_p^{-1} \int_0^{\pi} \int_{\max(\vartheta-\varphi(\beta),0)}^{\pi-\varphi(\beta)} \sin^{p-1}\vartheta dt d\vartheta  \\
		&= \pi-\varphi(\beta) - z_p^{-1} \int_0^{\pi} \left\{\pi-\varphi(\beta)-\max(\vartheta-\varphi(\beta),0)\right\}\sin^{p-1}\vartheta d\vartheta  \\
		&= z_p^{-1} \int_0^{\pi} \max(\vartheta-\varphi(\beta),0)\sin^{p-1}\vartheta d\vartheta  \\
		&= z_p^{-1} \int_{\varphi(\beta)}^{\pi} (\vartheta-\varphi(\beta))\sin^{p-1}\vartheta d\vartheta
\end{align}
\eqref{eq:int-order} follows by switching the order of integration and the rest of these equalities follow by straight-forward integration and some algebra.

Using the inequalities that $z_p \geq \frac{2}{\sqrt{p-1}}$ (see \cite{Led2000}) and $\sin(x)\leq \exp(-(x-\pi/2)^2/2)$ for $x\in[0,\pi]$, we can bound the last integral as
\begin{align}
	 z_p^{-1} \int_{\varphi(\beta)}^{\pi} (\vartheta-\varphi(\beta))\sin^{p-1}\vartheta d\vartheta &\leq \frac{\sqrt{p-1}}{2} \int_{\varphi(\beta)}^{\pi} (\vartheta-\varphi(\beta)) \exp\left(-\frac{p-1}{2}(\vartheta-\tfrac{\pi}{2})^2\right)d\vartheta
\end{align}
Performing the change of variables $a = \sqrt{p-1}(\vartheta-\tfrac{\pi}{2})$, we are left with the integral
\begin{align}
& \frac{1}{2} \int_{\sqrt{p-1}(\varphi(\beta)-\pi/2)}^{\sqrt{p-1} \pi/2} \left\{\frac{a}{\sqrt{p-1}}+\left(\frac{\pi}{2}-\varphi(\beta)\right)\right\} \exp\left(-\frac{a^2}{2}\right)da\\
=& -\frac{1}{2\sqrt{p-1}} \exp\left(-\frac{a^2}{2}\right) \bigg|_{\sqrt{p-1}(\varphi(\beta)-\pi/2)}^{\sqrt{p-1} \pi/2}+
 \frac{ \frac{\pi}{2}-\varphi(\beta)}{2} \int_{\sqrt{p-1}(\varphi(\beta)-\pi/2)}^{\sqrt{p-1} \pi/2} \exp\left(-\frac{a^2}{2}\right)da\\
 \label{eq:final-distance-bound} \leq& \frac{1}{2\sqrt{p-1}} \exp\left(-\frac{p-1}{2}(\pi/2-\varphi(\beta))^2\right) + \sqrt{\frac{\pi}{2}}\left( \frac{\pi}{2}-\varphi(\beta)\right)
 \end{align}
In this final bound, we bounded the first term by dropping the upper integrand, and for the second term we used the fact that
\begin{equation}
\int_{-\infty}^\infty \exp(-x^2/2) dx = \sqrt{2\pi}\,.
\end{equation}

We are now left with the task of computing a lower bound for $\varphi(\beta)$.   We need to first reparameterize the problem.  Let $K$ be a spherical cap.  Without loss of generality, we may assume that
\begin{equation}
	K = \{ x\in\Sp^{p-1}~:~x_1\geq h\}
\end{equation}
for some $h \in [0,1]$.  $h$ is the \emph{height} of the cap over the equator.  Via elementary trigonometry, the solid angle that $K$ subtends is given by $\pi/2-\sin^{-1}(h)$.  Hence, if $h(\beta)$ is the largest number such that $\beta$ caps of height $h$ cover $\Sp^{p-1}$, then $h(\beta)=\sin(\pi/2-\varphi(\beta))$.

The quantity $h(\beta)$ may be estimated using the following estimate from~\cite{Bri1998}.  For $h\in[0,1]$, let $\gamma(p,h)$ denote the volume of a spherical cap of $\Sp^{p-1}$  of height $h$.
\begin{lemma}[\cite{Bri1998}]
For $1\geq h\geq \frac{2}{\sqrt{p}}$,
\begin{equation}
\frac{1}{10 h \sqrt{p}}(1-h^2)^{\frac{p-1}{2}} \leq \gamma(p,h) \leq
\frac{1}{2 h \sqrt{p}}(1-h^2)^{\frac{p-1}{2}} \,.
\end{equation}
\end{lemma}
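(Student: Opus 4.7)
The plan is to sandwich the cap volume $\gamma(p,h)$ from both sides via the standard integral representation
\[
\gamma(p,h) \;=\; z_p^{-1}\int_h^1 (1-u^2)^{(p-3)/2}\,du, \qquad z_p \;\asymp\; \sqrt{2\pi/p},
\]
obtained from the polar-angle formula in the excerpt by the substitution $u = \cos\vartheta$. The bound $z_p^{-1} \le \sqrt{p-1}/2$ already appears in the excerpt, and a matching reverse inequality $z_p^{-1} \gtrsim \sqrt{p}$ follows from Gautschi's or Wendel's inequality on Gamma ratios. So the task is to sandwich the $u$-integral between constant multiples of $(1-h^2)^{(p-1)/2}/(ph)$.

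For the upper bound, I would use the identity $(1-u^2)^{(p-3)/2} = -\frac{1}{(p-1)u}\frac{d}{du}(1-u^2)^{(p-1)/2}$ to integrate by parts. The $u=1$ boundary term vanishes, the $u=h$ term contributes $(1-h^2)^{(p-1)/2}/((p-1)h)$, and the residual integrand $(1-u^2)^{(p-1)/2}/((p-1)u^2)$ enters with a minus sign and so can be dropped:
\[
\int_h^1 (1-u^2)^{(p-3)/2}\,du \;\le\; \frac{(1-h^2)^{(p-1)/2}}{(p-1)h}.
\]
Multiplying by $z_p^{-1} \le \sqrt{p-1}/2$ and absorbing $\sqrt{(p-1)/p}\le 1$ yields the upper bound with the clean constant $1/2$.

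For the lower bound, I would use a Laplace-type window estimate around the peak at $u=h$. Setting $u = h+t$, the ratio $(1-u^2)/(1-h^2) = 1 - (2ht+t^2)/(1-h^2)$ stays above $e^{-2/(p-1)}$ throughout $[h, h+\delta]$ provided $\delta \asymp (1-h^2)/((p-1)h)$, so the integrand stays within a universal factor of the peak value $(1-h^2)^{(p-3)/2}$ on this window. The hypothesis $h \ge 2/\sqrt{p}$ with $p \ge 9$ ensures that $\delta \le \min(h,1-h)$, so the window is legal and the quadratic correction $t^2$ is bounded by $2ht$. Integrating,
\[
\int_h^1 (1-u^2)^{(p-3)/2}\,du \;\ge\; \frac{\delta\,(1-h^2)^{(p-3)/2}}{e} \;\asymp\; \frac{(1-h^2)^{(p-1)/2}}{ph},
\]
and multiplication by $z_p^{-1} \gtrsim \sqrt{p}$ produces a lower bound of the form $C^{-1}(1-h^2)^{(p-1)/2}/(h\sqrt{p})$; the numerical constant, essentially $1/(e\sqrt{2\pi})$ corrected by the $(p-1)/p$ slack, can be pushed to $1/10$ for $p\ge 9$.

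The main technical obstacle lies in the lower bound: the window width $\delta$ must be calibrated so that three constraints hold simultaneously with universal constants, namely (a) the integrand stays within a bounded factor of its peak, (b) the window lies inside $[h,1]$, and (c) the quadratic term $t^2$ in $u^2-h^2 = 2ht+t^2$ is dominated by the linear one. The hypothesis $h \ge 2/\sqrt{p}$ is precisely the threshold at which the Laplace-type decay length $(1-h^2)/(ph)$ is shorter than the geometric room $1-h$, so these three calibration constraints are mutually compatible with clean universal constants. For $h < 2/\sqrt{p}$ the cap covers nearly half the sphere and a separate interpolation to the hemisphere value $1/2$ would be required, but this regime lies outside the scope of the lemma.
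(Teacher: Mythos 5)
The paper never proves this lemma: it is imported verbatim from Brieden et al.\ \cite{Bri1998} and used as a black box inside the proof of Theorem~\ref{theo:angle} in Appendix~\ref{app:width-angle}. So there is no internal proof to compare against, and your argument must stand on its own. Its skeleton is the right one — the representation $\gamma(p,h)=z_p^{-1}\int_h^1(1-u^2)^{(p-3)/2}\,du$ (with the correct exponent for $\Sp^{p-1}$, quietly repairing the paper's own off-by-one in its $\sin^{p-1}$ formula), integration by parts for the upper bound, a Laplace window for the lower bound, and the correct identification of $h\ge 2/\sqrt{p}$ as what lets the window calibrations coexist. The gap is that, in a lemma whose entire content is the explicit constants $1/2$ and $1/10$, neither half as written actually delivers its constant. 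In the upper bound, multiplying by $z_p^{-1}\le\sqrt{p-1}/2$ gives $\gamma(p,h)\le\frac{1}{2h\sqrt{p-1}}(1-h^2)^{(p-1)/2}$, and $\frac{1}{\sqrt{p-1}}=\frac{1}{\sqrt{p}}\sqrt{\frac{p}{p-1}}$ \emph{exceeds} $\frac{1}{\sqrt{p}}$: the slack factor points the wrong way and cannot be ``absorbed.'' (This one is a trivial index slip: using the same Ledoux bound at the correct index, $z_p^{-1}\le\sqrt{p-2}/2$, saves you, since $\sqrt{p(p-2)}\le p-1$.) The lower bound has the real problem. Bounding the integrand by its infimum over a window of length $\delta=\frac{c(1-h^2)}{(p-1)h}$, with $\delta\le h$ so that $2ht+t^2\le 3h\delta$, yields at best $\frac{c}{(p-1)h}\bigl(1-\tfrac{3c}{p-1}\bigr)^{(p-3)/2}(1-h^2)^{(p-1)/2}$, and after multiplying by $z_p^{-1}$ the resulting constant is at most $\max_c\,c\,e^{-3c/2}/\sqrt{2\pi}=\frac{2}{3e\sqrt{2\pi}}\approx 0.098<\tfrac{1}{10}$ in the large-$p$ limit. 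Your quoted constant $1/(e\sqrt{2\pi})\approx 0.147$ conflates the two calibrations: you cannot simultaneously take the window at full unit length ($c=1$, which forces decay $e^{-3/2}$) and decay $e^{-1}$ (which forces $c=2/3$). Moreover the binding case is $p\to\infty$, not $p=9$: at $p=9$ the decay factor is $(3/4)^3\approx 0.42>e^{-1}$ and the bound holds, but the product decreases in $p$ and drops below $1/(10h\sqrt{p})$ around $p\approx 47$, so ``can be pushed to $1/10$ for $p\ge 9$'' is exactly backwards — finite-$p$ slack helps you only for small $p$.

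The repair stays entirely within your framework: do not bound the integrand by its infimum, integrate the linear envelope instead. For $t\in[0,\delta]$ with $\delta=\min(h,1-h)\le h$ one has $(1-u^2)^{\frac{p-3}{2}}\ge(1-h^2)^{\frac{p-3}{2}}\bigl(1-\tfrac{3ht}{1-h^2}\bigr)_+^{\frac{p-3}{2}}$, and the substitution $s=3ht/(1-h^2)$ gives, with $s_0=3h\delta/(1-h^2)$,
\[
\int_h^{h+\delta}(1-u^2)^{\frac{p-3}{2}}\,du\;\ge\;\frac{2\,(1-h^2)^{\frac{p-1}{2}}}{3(p-1)h}\Bigl(1-\bigl(1-\min\{s_0,1\}\bigr)^{\frac{p-1}{2}}\Bigr)\,.
\]
The hypothesis $h\ge 2/\sqrt{p}$ enters exactly here: for $h\le\tfrac12$ it gives $s_0\ge 3h^2\ge 12/p$, so the correction term is at most $e^{-6(p-1)/p}\le e^{-16/3}<0.01$ for $p\ge 9$, while for $h>\tfrac12$ one has $s_0\ge 1$ and the correction vanishes. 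This replaces your factor $2/(3e)\approx 0.245$ by $\tfrac{2}{3}(1-e^{-16/3})\approx 0.66$, and together with the Wendel-type bound $z_p^{-1}\ge (p-1)/\sqrt{2\pi p}$ it yields $\gamma(p,h)\ge\frac{0.26}{h\sqrt{p}}(1-h^2)^{(p-1)/2}$ for all $p\ge 9$ and $h\in[2/\sqrt{p},1]$ — comfortably above $1/10$. (Alternatively, keeping the remainder term in your own integration by parts and bounding it by $\frac{1-h^2}{h^2}$ times the original integral gives a self-improving inequality with a similar constant.)
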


Note that for $h \geq \frac{2}{\sqrt{p}}$,
\begin{equation}
	\frac{1}{2 h \sqrt{p}}(1-h^2)^{\frac{p-1}{2}}   \leq 	 \frac{1}{4}(1-h^2)^{\frac{p-1}{2}}   \leq \frac{1}{4}\exp(-\tfrac{p-1}{2} h^2)\,.
\end{equation}
So if
\begin{equation}
	h = \sqrt{\frac{2\log(4\beta)}{p-1}}
\end{equation}
then $h\leq 1$ because we have assumed $\beta \leq \tfrac{1}{4} \exp\{\tfrac{p}{9}\}$ and $p \geq 9$.  Moreover,
$h\geq\frac{2}{\sqrt{p}}$ and the volume of the cap with height $h$ is less than or equal to $1/\beta$.  That is
\begin{equation}
\varphi(\beta)\geq \pi/2 - \sin^{-1}\left( \sqrt{\frac{2\log(4\beta)}{p-1}}\right)\,.
\end{equation}
Combining the estimate \eqref{eq:final-distance-bound} with Proposition~\ref{prop:dual-width}, and using our estimate for $\varphi(\beta)$, we get the bound
\begin{equation}\label{eq:complicated-looking-bound}
w(\mathcal{C}) \leq \frac{1}{2}\sqrt{\frac{p}{p-1}}\exp\left(-\tfrac{p-1}{2} \sin^{-1}\left( \sqrt{\frac{2\log(4\beta)}{p-1}}\right)^2\right) + \sqrt{\frac{\pi p}{2}}\sin^{-1}\left( \sqrt{\frac{2\log(4\beta)}{p-1}}\right)
\end{equation}
This expression can be simplified by using the following bounds. First, $\sin^{-1}(x)\geq x$ lets us upper bound the first term by $\sqrt{\frac{p}{p-1}}\frac{1}{8\beta}$. For the second term, using the inequality $\sin^{-1}(x)\leq \tfrac{\pi}{2}x$ results in the upper bound
\begin{equation}
w(\mathcal{C}) \leq \sqrt{\frac{p}{p-1}}\left( \frac{1}{8\beta} + \frac{\pi^{3/2}}{2}  \sqrt{\log(4\beta)}\right).
\end{equation}
For $p\geq 9$ the upper bound can be expressed simply as $w(\mathcal{C})\leq 3\sqrt{\log(4 \beta)}$.  We recall that $\beta = \tfrac{1}{\Theta}$, which completes the proof of the theorem.
\end{proof}

\section{Direct Width Calculations}
\label{app:direct}

We first give the proof of Proposition~\ref{prop:l1}.
\begin{proof}

Let $\bxs$ be an $s$-sparse vector in $\R^p$ with $\ell_1$ norm equal to $1$, and let $\A$ denote the set of unit-Euclidean-norm one-sparse vectors.  Let $\Delta$ denote the set of coordinates where $\bxs$ is non-zero. The normal cone at $\bxs$ with respect to the $\ell_1$ ball is given by
\begin{align}
	N_\A(\bxs) &= \mathrm{cone}\left\{\bz\in\R^p~:~\bz_i= \mathrm{sgn}(\bxs_i)~\mbox{for}~i\in \Delta,\,\,\, |\bz_i|\leq1~\mbox{for}~i\in \Delta^c\right\}\\
	&= \left\{\bz\in\R^p~:~\bz_i= t\mathrm{sgn}(\bxs_{i})~\mbox{for}~i\in \Delta,\,\,\, |\bz_i|\leq t~\mbox{for}~i\in \Delta^c~\mbox{for some}~t>0\right\}\,.
\end{align}
Here $\Delta^c$ represents the zero entries of $\bxs$. The minimum squared distance to the normal cone at $\bxs$ can be formulated as a one-dimensional convex optimization problem for arbitrary $\bz\in\R^p$
\begin{align}
\inf_{\bu\in N_\A(\bxs)} \|\bz-\bu\|_2^2 &=
\inf_{\stackrel{t\geq 0}{|\bu_i|<t,\,\,i\in \Delta^c}} \sum_{i\in \Delta} (\bz_i-t\mathrm{sgn}(\bxs_i))^2 + \sum_{j\in \Delta^c}  (\bz_j - \bu_j)^2\\
&=\inf_{t\geq 0} \sum_{i\in \Delta} (\bz_i-t\mathrm{sgn}(\bxs_{i}))^2 + \sum_{j\in \Delta^c}  \mathrm{shrink}(\bz_j,t)^2
\end{align}
where
\begin{equation}\label{eq:shrink}
	\mathrm{shrink}(z,t) = \begin{cases} z+t & z<-t\\
	0 &-t\leq z \leq t\\
	z - t & z>t
	 \end{cases}
\end{equation}
is the $\ell_1$-shrinkage function. Hence, for any fixed $t\geq 0$ independent of $\bg$, we have
\begin{align}
	\E\left[\inf_{\bu \in N_\A(\bxs) } \|\bg-\bu\|_2^2\right]
	&\leq  \E\left[\sum_{i\in \Delta} ( \bg_i-t\mathrm{sgn}(\bxs_i) )^2
		+ \sum_{j\in \Delta^c}  \mathrm{shrink}(\bg_j,t)^2\right]\\
\label{eq:ell-1-width-bound} &= s(1+t^2)
	+ \E\left[\sum_{j\in \Delta^c}  \mathrm{shrink}(\bg_j,t)^2\right]\,.
\end{align}

Now we directly integrate the second term, treating each summand individually.  For a zero-mean, unit-variance normal random variable $g$,
\begin{align}
\E\left[ \mathrm{shrink}(g,t)^2\right] &= \frac{1}{\sqrt{2\pi}} \int_{-\infty}^{-t} (g+t)^2\exp(-g^2/2) dg +  \frac{1}{\sqrt{2\pi}}\int_t^\infty (g-t)^2 \exp(-g^2/2)dg\\
&= \frac{2}{\sqrt{2\pi}} \int_{t}^{\infty} (g-t)^2\exp(-g^2/2) dg \\
&= -\frac{2}{\sqrt{2\pi}} t\exp(-t^2/2) +
 \frac{2(1+t^2)}{\sqrt{2\pi}} \int_{t}^{\infty} \exp(-g^2/2) dg\\
 &\leq \frac{2}{\sqrt{2\pi}}\left(-t  +\frac{1+t^2}{t} \right)\exp(-t^2/2)\\
 &=\frac{2}{\sqrt{2\pi}}\frac{1}{t}\exp(-t^2/2)\,.
\end{align}
The first simplification follows because the $\mathrm{shrink}$ function and Gaussian distributions are symmetric about the origin.  The second equality follows by integrating by parts. The inequality follows by a tight bound on the Gaussian $Q$-function
\begin{equation}
 Q(x) =  \frac{1}{\sqrt{2\pi}} \int_{x}^{\infty} \exp(-g^2/2) dg \leq \frac{1}{\sqrt{2\pi}}\frac{1}{x}\exp(-x^2/2)\quad\mbox{for }x>0\,.
\end{equation}
Using this bound, we get
\begin{equation}\label{eq:q-fun-bnd}
\E\left[\inf_{\bu\in N_\A(\bxs)} \|\bg-\bu\|_2^2\right]  \leq s(1+t^2) +(p-s)\frac{2}{\sqrt{2\pi}}\frac{1}{t}\exp(-t^2/2)
\end{equation}
Setting $t= \sqrt{2\log(p/s)}$ gives
\begin{equation}\label{eq:l1-precise-bound}
\E\left[\inf_{\bz\in N_\A(\bxs)} \|\bg-\bz\|_2^2\right]  \leq s\left(1+2\log\left(\frac{p}{s}\right)\right)+\frac{s(1-s/p)}{\pi \sqrt{\log(p/s)}}\leq
2s\log\left(p/s\right)+\tfrac{5}{4}s\,.
\end{equation}
The last inequality follows because
\begin{equation}
\frac{(1-s/p)}{\pi \sqrt{\log(p/s)}}\leq 0.204<1/4
\end{equation}
whenever $0 \leq s\leq p$.

\end{proof}

Next we give the proof of Proposition~\ref{prop:nuclear}.

\begin{proof}

Let $\bxs$ be an $m_1 \times m_2$ matrix of rank $r$ with singular value decomposition $U\Sigma V^*$, and let $\A$ denote the set of rank-one unit-Euclidean-norm matrices of size $m_1 \times m_2$.   Without loss of generality, impose the conventions $m_1\leq m_2$, $\Sigma$ is $r\times r$, $U$ is $m_1 \times r$, $V$ is $m_2 \times r$, and assume the nuclear norm of $\bxs$ is equal to $1$.

Let $\bu_k$ (respectively $\bv_k$) denote the $k$'th column of $U$ (respectively $V$).   It is convenient to introduce the orthogonal decomposition $\R^{m_1 \times m_2} = \Delta \oplus \Delta^\perp$ where $\Delta$ is the linear space spanned by elements of the form $\bu_k \bz^T$ and $\by \bv_k^T$, $1 \le k \le r$, where $\bz$ and $\by$ are arbitrary, and $\Delta^\perp$ is the orthogonal complement of $\Delta$.  The space $\Delta^\perp$ is the subspace of matrices spanned by the family $(\by \bz^T)$, where $\by$ (respectively $\bz$) is any vector orthogonal to all the columns of $U$ (respectively $V$). The normal cone of the nuclear norm ball at $\bxs$ is given by the cone generated by the subdifferential at $\bxs$:
\begin{align}
	N_\A(\bxs) &= \mathrm{cone}\left\{UV^T+W\in\R^{m_1\times m_2}~:~W^TU = 0,\,\,\,\,WV=0,\,\,\,\,\|W\|_\A^\ast\leq 1\right\}\\
	&=\left\{tUV^*+W\in\R^{m_1\times m_2}~:~W^TU = 0,\,\,\,\,WV=0,\,\,\,\,\|W\|_\A^\ast \leq t,\,\,\,t\geq 0\right\}\,.
\end{align}
Note that here $\|Z\|_\A^\ast$ is the operator norm, equal to the maximum singular value of $Z$~\cite{RecFP2010}.

Let $G$ be a Gaussian random matrix with i.i.d. entries, each with mean zero and unit variance.  Then the matrix
\begin{equation}
	Z(G) = \|\PTc(G)\| UV^* + \PTc(G)
\end{equation}
is in the normal cone at $\bxs$.  We can then compute
\begin{align}
	\E\left[\|G-Z(G)\|_F^2\right] & = \E\left[\|\PT(G)-\PT(Z(G))\|_F^2\right] \\
\label{eq:matrix-indep} &= \E\left[\|\PT(G)\|_F^2\right]+\E\left[\|\PT(Z(G))\|_F^2\right] \\
	&=  r(m_1+m_2-r) + r\E[\|\PTc(G)\|^2]\,. \label{eq:nuclear-width-bound-direct}
\end{align}
Here \eqref{eq:matrix-indep} follows because $\PT(G)$ and $\PTc(G)$ are independent.  The final line follows because $\mathrm{dim}(T)=r(m_1+m_2-r)$ and the Frobenius (i.e., Euclidean) norm of $UV^*$ is $\|UV^*\|_F=\sqrt{r}$.  Due to the isotropy of Gaussian random matrices,  $\PTc(G)$ is identically distributed as an $(m_1-r)\times(m_2-r)$ matrix with i.i.d. Gaussian entries each with mean zero and variance one.  We thus know that
\begin{equation}
	\P\left[ \|\PTc(G)\|\geq \sqrt{m_1-r}+\sqrt{m_2-r}  +s\right] \leq \exp\left(-s^2/2\right)	
\end{equation}
(see, for example,~\cite{DavS2001}).
To bound the latter expectation, we again use the integral form of the expected value.  Letting $\mu_{T^\perp}$ denote the quantity $\sqrt{m_1-r}+\sqrt{m_2-r}$, we have
\begin{align}
\E\left[\|\PTc(G)\|^2\right] &= \int_0^\infty \P\left[\|\PTc(G)\|^2>h\right] dh\\
&\leq \mu_{T^\perp}^2 +  \int_{ \mu_{T^\perp}^2}^\infty\P\left[\|\PTc(G)\|^2>h\right] dh\\
&\leq  \mu_{T^\perp}^2 +  \int_0^\infty\P\left[\|\PTc(G)\|^2>  \mu_{T^\perp}^2 + t \right] dt\\
&\leq  \mu_{T^\perp}^2 +  \int_0^\infty\P\left[\|\PTc(G)\|> \mu_{T^\perp} +\sqrt{t} \right] dt\\
&\leq \mu_{T^\perp}^2 +  \int_0^\infty \exp(-t/2) dt\\
&=  \mu_{T^\perp}^2 +  2
\end{align}

Using this bound in~\eqref{eq:nuclear-width-bound-direct}, we get that
\begin{align}\label{eq:nuclear-crude-bound}
\E\left[\inf_{Z\in N_\A(\bxs)} \|G-Z\|_F^2\right] & \leq r(m_1+m_2-r) + r(\sqrt{m_1-r}+\sqrt{m_2-r})^2+ 2r\\
& \leq r(m_1+m_2-r) + 2r(m_1+m_2-2r)+ 2r\\
& \leq 3r(m_1+m_2-r)
\end{align}
where the second inequality follows from the fact that $(a+b)^2\leq 2a^2+2b^2$.  We conclude that $3r(m_1+m_2-r)$ random measurements are sufficient to recover a rank $r$, $m_1\times m_2$ matrix using the nuclear norm heuristic.
\end{proof}


\end{document}